\definecolor{gr}{rgb}   {0.,   0.69,   0.23 }
\definecolor{bl}{rgb}   {0.,   0.5,   1. }
\definecolor{mg}{rgb}   {0.85,  0.,    0.85}
\definecolor{yl}{rgb}   {0.8,  0.7,   0.}
\definecolor{or}{rgb}  {0.7,0.2,0.2}
\newtheorem{theorem}{Theorem} [section]
\newtheorem{lemma}[theorem]{Lemma}
\newtheorem{proposition}[theorem]{Proposition}
\newtheorem{remark}[theorem]{Remark}
\newtheorem{definition}[theorem]{Definition}
\DeclareMathOperator*{\intt}{\int}
\newcommand{\1}{\hspace{0.5mm}\text{I}\hspace{0.5mm}}
\newcommand{\II}{\text{I \hspace{-2.8mm} I} }
\newcommand{\noi}{\noindent}
\newcommand{\Z}{\mathbb{Z}}
\newcommand{\R}{\mathbb{R}}
\newcommand{\T}{\mathbb{T}}
\newcommand{\N}{\mathbb{N}}
\let\Re=\undefined\DeclareMathOperator*{\Re}{Re}
\let\Im=\undefined\DeclareMathOperator*{\Im}{Im}
\let\P= \undefined
\newcommand{\P}{\mathbf{P}}
\newcommand{\PP}{\mathbb{P}}
\newcommand{\NN}{\mathcal{N}}
\newcommand{\RR}{\mathcal{R}}
\newcommand{\TT}{\mathcal{T}}
\newcommand{\EE}{\mathcal{E}}
\newcommand{\Nf}{\mathfrak{N}}
\newcommand{\Sf}{\mathfrak{S}}
\newcommand{\al}{\alpha}
\newcommand{\dl}{\delta}
\newcommand{\eps}{\varepsilon}
\newcommand{\G}{\Gamma}
\newcommand{\s}{\sigma}
\newcommand{\ft}{\widehat}
\newcommand{\wt}{\widetilde}
\newcommand{\cj}{\overline}
\newcommand{\dx}{\partial_x}
\newcommand{\dt}{\partial_t}
\newcommand{\embeds}{\hookrightarrow}
\newcommand{\LRA}{\Longrightarrow}
\newcommand{\Ta}{\Theta}
\newcommand{\jb}[1]
{\langle #1 \rangle}
\renewcommand{\l}{\ell}
\newcommand{\les}{\lesssim}
\newcommand{\ges}{\gtrsim}
\newcommand{\ind}{\mathbf 1}
\newcommand{\bn}{{\bf n}}
\newcommand{\JJ}{\mathcal{J}}
\numberwithin{equation}{section}
\numberwithin{theorem}{section}
\newcommand{\too}{\longrightarrow}
\newtheorem*{ackno}{Acknowledgements}
\newcommand{\Ns}{\textup{\textsf{N}}}
\newcommand{\Rs}{\textup{\textsf{R}}}
\newcommand{\M}{\mathcal{M}}
\tikzset{
	dot/.style={circle,fill=black,draw=black,inner sep=1pt,minimum size=0.5mm},
	>=stealth,
	}
\tikzset{
	ddot/.style={circle,fill=white,draw=black,inner sep=2pt,minimum size=0.8mm},
	>=stealth,
	}
\tikzset{decision/.style={ 
        draw,
        diamond,
        aspect=1.5
    }}
\tikzset{dia2/.style
={diamond,fill=white,draw=black,inner sep=0pt,minimum size=1mm},
	>=stealth,
	}
\tikzset{dia/.style
={star,fill=black,draw=black,inner sep=0pt,minimum size=1mm},
	>=stealth,
	}
\def\DeclareSymbol#1#2#3{\expandafter\gdef\csname MH@symb@#1\endcsname{\tikz[baseline=#2,scale=0.15]{#3}}}
\def\<#1>{\csname MH@symb@#1\endcsname}
\tikzstyle{dot1} = [ draw=  gray!00, 
\tikzstyle{dot2} = [ draw=  black, 
\tikzstyle{dot3} = [ draw=  gray!00, 
\begin{document}

\baselineskip = 14pt

\title[Quasi-invariant measures for the cubic 4NLS in negative Sobolev spaces]
{Quasi-invariant Gaussian measures for the cubic fourth order 
nonlinear Schr\"odinger equation 
 in negative Sobolev spaces}

\author[T.~Oh and K.~Seong]
{Tadahiro Oh and Kihoon Seong}

\address{
Tadahiro Oh\\ School of Mathematics\\
The University of Edinburgh\\
and The Maxwell Institute for the Mathematical Sciences\\
James Clerk Maxwell Building\\
The King's Buildings\\
Peter Guthrie Tait Road\\
Edinburgh\\ 
EH9 3FD\\
 United Kingdom}

\email{hiro.oh@ed.ac.uk}

\address{
Kihoon Seong\\
Department of Mathematical Sciences\\
Korea Advanced Institute of Science and Technology\\ 
291 Daehak-ro, Yuseong-gu, Daejeon 34141, Republic of Korea
}

\email{hun1022kr@kaist.ac.kr}

\subjclass[2010]{35Q55, 60H30}

\keywords{fourth order nonlinear Schr\"odinger equation; quasi-invariance; normal form reduction}

\begin{abstract}
We continue the study on the transport properties of the Gaussian measures
on Sobolev spaces under the dynamics of the cubic fourth order nonlinear Schr\"odinger equation.
By considering the renormalized equation, 
we extend  the quasi-invariance results  in~\cite{OTz, OST} 
to  Sobolev spaces of negative regularity.
Our proof combines 
the approach  introduced by Planchon, Tzvetkov, and Visciglia \cite{PTV}
with 
the normal form approach in \cite{OTz, OST}.
\end{abstract}


\maketitle


\tableofcontents

\newpage

\section{Introduction}
\label{SEC:1}

\subsection{Main result}  
\label{SUBSEC:4NLS}

In this paper, we study the statistical properties
of solutions
to  the cubic fourth order nonlinear Schr\"odinger equation (4NLS) on the circle
$\T=\R/(2\pi\Z )$:\footnote{The defocusing\,/\,focusing nature of the equation
does not play any role and thus we only consider the defocusing case.
The main result also applies to the focusing case.}
\begin{align}
\label{4NLS}
i\partial_tu=\partial_x^4u+  \vert u \vert^2u, 
\qquad (x,t)\in \T \times \R.
\end{align}

Let us first introduce some notations.
Given $s \in \R$, 
we consider the  Gaussian measures $\mu_s$, formally written as 
\begin{align}
d\mu_s=Z_s^{-1}e^{-\frac{1}{2}\Vert u \Vert_{H^s}^2}\,du=\prod\limits_{n\in \mathbb{Z}}Z_{s,n}^{-1}e^{-\frac{1}{2}\langle n \rangle^{2s} \vert \widehat{u}_n \vert^2} \,d\widehat{u}_n.
\label{gauss1}
\end{align}

\noi
Namely, $\mu_s$ is 
 the induced probability measure under the random Fourier series:\footnote{In the following, we often drop the harmless factor of $2\pi$.}
\begin{align}
\omega \in \Omega \longmapsto u^{\omega}(x)=u(x;\omega)=\sum_{n\in \Z}
\frac{g_n(\omega)}{\jb{n}^s}e^{inx},
\label{series1}
\end{align}

\noi
where $\jb{\,\cdot\,} =(1+| \,\cdot \,|^2)^{\frac{1}{2}}$ and $\{g_n \}_{n\in \mathbb{Z}}$ is a sequence of independent standard complex-valued Gaussian random variables\footnote{By convention, 
we set  Var$(g_n)=1$, $n \in \Z$.} on a probability space $(\Omega,\mathcal{F}, \PP)$.
It is easy to see that the random distribution \eqref{series1} belongs almost surely to $H^\s(\T)$ 
 if and only if 
\begin{align}
\s <s-\frac{1}{2}.
\label{reg1}
\end{align}

In \cite{OTz, OST}, 
with Tzvetkov and Sosoe, 
the first author studied the transport  properties
of Gaussian measures $\mu_s$ in \eqref{gauss1}
 under the 4NLS dynamics
and proved quasi-invariance\footnote{Given a measure space $(X,\mu)$, we say that $\mu$ is quasi-invariant under a measurable transformation $T:X\to X$ if the transported measure $T_{*}\mu=\mu\circ T^{-1}$ and $\mu$ are equivalent, i.e. mutually absolutely continuous with respect to each other.} of $\mu_s$, $s > \frac 12$.
Our main goal in this paper is to extend the quasi-invariance results
in \cite{OTz, OST}
to Gaussian measures on periodic distributions
of negative regularity.

It is known \cite{OTz} that 
the cubic 4NLS \eqref{4NLS} is globally well-posed in $L^2(\T)$.
Moreover, this well-posedness result is sharp in the sense that 
\eqref{4NLS} is known to be ill-posed in negative Sobolev spaces
\cite{GO, OW1}. 
Thus, in view of \eqref{reg1}, 
the quasi-invariance result for $s > \frac 12$ is optimal
since for $s \leq \frac 12$, 
the cubic 4NLS \eqref{4NLS} is almost surely ill-posed
with respect to the initial data given by the random Fourier series \eqref{series1}.
In order to study the dynamical problem in negative Sobolev spaces, 
we consider the following renormalized 4NLS:
\begin{align}
\textstyle
i\dt u=\dx^4u +\big(| u |^2-2\fint_{\T} | u |^2\,dx \big)u, 
\label{4NLS2}
\end{align}

\noi
where $\fint f(x) dx = \frac{1}{2\pi}\int f(x) dx$.
For smooth functions, the equation \eqref{4NLS2}
is equivalent to \eqref{4NLS} via the following invertible gauge transform:
\begin{equation*}
\mathcal{G}(u)(t) : = e^{ 2 i t \fint |u(t)|^2 dx} u(t).
\end{equation*}

\noi
Namely, $u \in C(\R; L^2(\T))$ satisfies \eqref{4NLS}  
if and only if $\mathcal{G}(u)$ satisfies \eqref{4NLS2}.
On the other hand, the gauge transform $\mathcal{G}$ does not make sense outside $L^2(\T)$
and thus these equations 
 describe genuinely different dynamics, if any, outside $L^2(\T)$.
As mentioned above, the original equation~\eqref{4NLS}
is ill-posed in negative Sobolev spaces.
As for  the renormalized cubic 4NLS \eqref{4NLS2}, 
the first author and Y.~Wang \cite{OW1} proved 
its  global well-posedness in $H^s(\T)$ for $s > - \frac 13$.
See also~\cite{Kwak} for local well-posedness of \eqref{4NLS2}
for $s = -\frac 13$.
See \cite{BO96, Christ, GH, OS, OW2}
for an analogous renormalization in the context 
of the usual nonlinear Schr\"odinger equation (NLS)
with the second order dispersion.
Before proceeding further, 
we point out that 
the solution map to~\eqref{4NLS2}, constructed in \cite{OW1, Kwak}, 
is not locally uniformly continuous in negative Sobolev spaces
 \cite{CO2012, OTz}. Namely, we can not construct solutions
 by a contraction argument.
This point 
will be important in our study; see Proposition~\ref{PROP:app2} below.

We now state our main result.

\begin{theorem}
\label{THM:1}
Let $s > \frac{3}{10}$. Then, the Gaussian measure $\mu_s$ in \eqref{gauss1} is quasi-invariant under the dynamics of the renormalized cubic 4NLS \eqref{4NLS2}.	
\end{theorem}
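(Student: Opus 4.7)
The plan is to combine the framework of Planchon, Tzvetkov, and Visciglia \cite{PTV}, which works directly with $\mu_s$ via a smooth cutoff on $\|u\|_{H^s}^2$ and a Cameron--Martin type integration by parts on Gauss space, with the normal form reductions at the energy level developed in \cite{OTz, OST}. Since $\mu_s$ is supported in $H^\sigma(\T)$ only for $\sigma<s-\tfrac12$, the $H^s$-norm is $\mu_s$-almost surely infinite, so all manipulations must be carried out at the level of a frequency truncation, passing to the limit only at the end.

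I would first introduce the truncation $P_N$ and the associated truncated dynamics: the finite-dimensional Hamiltonian flow for the Wick-renormalized nonlinearity on $E_N = P_N L^2(\T)$, coupled with the free linear flow on $E_N^\perp$. Because $\mu_s$ factorizes as $\mu_{s,N}\otimes \mu_{s,N}^\perp$ and the truncated flow on $E_N$ preserves the finite-dimensional Gaussian $\mu_{s,N}$ up to a density computable from Liouville's theorem applied to the Hamiltonian, the problem reduces to obtaining, uniformly in $N$, a density estimate for the pushforward $(\Phi^N_t)_\ast\mu_s$. Transferring this to the full dynamics will then rely on a nontrivial almost-sure approximation statement, announced as Proposition~\ref{PROP:app2} in the paper, whose subtlety stems from the failure of local uniform continuity of the solution map in negative Sobolev spaces.

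The heart of the argument is an energy estimate for $\|P_N u(t)\|_{H^s}^2$. Differentiating along \eqref{4NLS2} and using that the $\partial_x^4$-part is self-adjoint, the time derivative reduces to a quartic form on the hyperplane $n_1-n_2+n_3-n_4=0$ with symbol $\jb{n_1}^{2s}-\jb{n_2}^{2s}+\jb{n_3}^{2s}-\jb{n_4}^{2s}$, from which the trivial pair resonances are removed by the Wick renormalization. One then performs iterative normal form reductions, integrating by parts in time via $e^{it\Phi}=\tfrac{1}{i\Phi}\partial_t e^{it\Phi}$, where $\Phi(\bar n)=n_1^4-n_2^4+n_3^4-n_4^4$; each step absorbs a boundary contribution into a correction of the energy and yields a higher-multilinear remainder with an additional factor of $\Phi^{-1}$. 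The quartic dispersion provides a favourable lower bound on $\Phi$ in non-resonant regions, and the iteration is pushed until the remaining multilinear form admits a deterministic estimate valid on $\supp(\mu_s)$.

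The \cite{PTV}-style step is then implemented as follows. For a smooth cutoff $\chi$ one considers
\begin{equation*}
\frac{d}{dt}\int \chi\bigl(\|P_N u(t)\|_{H^s}^2\bigr)\,d\mu_s,
\end{equation*}
substitutes the normal-form-reduced expression for $\tfrac{d}{dt}\|P_N u\|_{H^s}^2$, and uses Gaussian integration by parts together with Wiener-chaos moment bounds to convert this into an $N$-uniform differential inequality of PTV type. A Gr\"onwall argument and the limit $N\to\infty$ then yield quasi-invariance. The main obstacle I expect is the normal form step: balancing the gain of $\Phi^{-1}$ per iteration against the worsening combinatorics and $L^p(\mu_s)$-moments of higher-multilinear Gaussian polynomials, where one must carefully separate the nearly-resonant regime $n_{\max}\sim n_{\text{sub-max}}$ from the generic case. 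The threshold $s>\tfrac{3}{10}$ should reflect precisely how far this iteration can be pushed using the quartic dispersion of $\partial_x^4$, representing the borderline at which the chaos estimates close against the deterministic multilinear gain.
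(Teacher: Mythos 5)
Your plan identifies all the structural ingredients of the paper's proof: frequency truncation with the coupled finite-dimensional Hamiltonian flow and linear evolution on high modes, the factorization $\mu_s=\mu_{s,N}\otimes\mu_{s,N}^\perp$ with Liouville on the low modes, the infinite iteration of normal form reductions to build a modified energy, the PTV-style differential inequality and Gr\"onwall, and the crucial approximation property of the truncated dynamics (whose weakening at negative regularity you correctly flag as the subtle point). The regularity threshold $s>\tfrac{3}{10}$ does indeed arise from how far the normal-form iteration closes.

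Where your proposal deviates from the paper is in the implementation of the PTV step. You propose differentiating $\int\chi(\|P_Nu(t)\|_{H^s}^2)\,d\mu_s$ and then applying Gaussian integration by parts together with Wiener-chaos moment bounds to obtain the $N$-uniform inequality. Two issues arise. First, $\|P_Nu\|_{H^s}^2\to\infty$ $\mu_s$-a.s.\ as $N\to\infty$, so a cutoff placed directly on this quantity degenerates; the paper instead cuts off in the topology where $\mu_s$ actually lives, namely on balls $B_R\subset H^{s-\frac12-\eps}(\T)$, and encodes the $H^s$-energy \emph{correction terms} (not the $H^s$-energy itself) as a bounded density $F_{s,N}=\exp\bigl(\sum_j\NN_{0,N}^{(j)}(\P_{\le N}u)\bigr)$, bounded uniformly on $B_R$ by Lemma~\ref{LEM:E2}. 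Second, and relatedly, the paper's energy estimate (Proposition~\ref{PROP:energy}) is a purely deterministic, pointwise bound $\sup_{t\in[0,T]}\bigl|\tfrac{d}{dt}\EE_N(u)(t)\bigr|\le C_s(R)$ valid for any solution in $B_R$; no Gaussian integration by parts or Wiener-chaos hypercontractivity is used. This matters: the argument is entirely local in phase space, which is precisely what lets one dispense with the $L^2$-cutoff that is unavailable below $L^2$. The Gr\"onwall step is then run directly on measures of flowed subsets of $B_{2R}$ (via the change-of-variable formula and Lemma~\ref{LEM:ch}), yielding $\rho_{s,N}(\Phi_N(\tau)(D))\le e^{C'(R)\tau}\rho_{s,N}(D)$, with no probabilistic input. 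So your outline is correct in structure, but if you carry it out with a Wiener-chaos step you will be doing unnecessary work, and if you genuinely cut off on $\|P_Nu\|_{H^s}^2$ you will not get $N$-uniformity; replace that with a compact-set restriction in $H^{s-\frac12-\eps}$ and a density built from the normal-form corrections, and the two arguments coincide.
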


The transport properties of Gaussian measures 
have been studied extensively in probability theory;
see,  for example,  \cite{CM, Ramer, Cru1, Cru2}.
In \cite{Tz}, Tzvetkov initiated the study of
transport properties of Gaussian measures on functions\,/\,distributions under nonlinear Hamiltonian PDEs
and there has been a significant progress in this direction
\cite{Tz, OTz,  OTz2, OST, OTT,  PTV, GOTW, FT2019, STX, DT2020}.
In particular, Theorem \ref{THM:1} 
 extends the quasi-invariance results in \cite{OTz, OST}\footnote{The quasi-invariance results
 in \cite{OTz, OST} were proved for \eqref{4NLS} but they equally apply to the renormalized 4NLS~\eqref{4NLS2}.}
 to  negative Sobolev spaces  $H^\s(\T)$,  $\s > -\frac 15$.

The general strategy, as introduced in  \cite{Tz}, is to study quasi-invariance of the Gaussian measures $\mu_s$ indirectly by studying weighted Gaussian measures $\rho_s$, where the weight 
corresponds to correction terms that arise due to the presence of the nonlinearity. 
The two key steps  in this strategy are (i) the construction of the weighted Gaussian measure
$\rho_s$
and  (ii) an energy estimate on the time derivative of the modified energy
(that is,  the energy of the Gaussian measure plus the correction terms). 
It is crucial to choose 
good correction terms in order to establish an effective energy estimate.
In the context of 4NLS \eqref{4NLS}, 
this general strategy was applied
in~\cite{OTz, OST}.  In~\cite{OTz}, 
the correction term was obtained by applying a normal form reduction
(i.e.~integration by parts in time) in the spirit of 
\cite{TT, NTT, BIT, MPS}.
In the second work \cite{OST}, 
Sosoe, Tzvetkov, and the first author 
employed  an infinite iteration of normal form reductions, introduced
in \cite{GKO}, 
to compute an infinite series of correction terms
to the $H^s$-energy functional. 
Such an infinite iteration of normal form reductions has turned out to be
a useful tool in constructing solutions to PDEs
and establishing energy estimates;
see \cite{GKO, OW1,KOY,  OW2, K2019, FO}.

In order to prove Theorem \ref{THM:1}
for the renormalized 4NLS \eqref{4NLS2}
in negative Sobolev spaces, 
we also apply an infinite iteration of normal form reductions
to the $H^s$-energy functional 
and introduce
 infinitely many correction terms.
In \cite{OST}, 
the multilinear forms appearing in normal form reductions
were shown to be bounded in $L^2(\T)$.
The main task here is to extend
the  boundedness of these multilinear forms 
to negative Sobolev spaces $H^\s(\T)$, $-\frac 15 < \s < 0$.
See also
Remark~\ref{REM:NF}.
This gives rise to 
 the modified energies $\EE_{N}(u)$ 
 in~\eqref{E2}
 whose time derivatives are uniformly controlled
 on bounded sets in the support of the Gaussian measure $\mu_s$
 (see Proposition~\ref{PROP:energy}), 
 provided that $s>\frac{3}{10}$.
We point out that, as in the previous works \cite{OTz, OST}, 
 the regularity restriction in Theorem \ref{THM:1} 
 comes from the energy estimate.

The next step is to construct weighted Gaussian measures.
In  \cite{OTz, OST}, the weighted Gaussian measures
were normalized to be probability measures thanks to the (conserved) $L^2$-cutoff.
For our current problem in negative Sobolev spaces, 
however, an $L^2$-cutoff is not available
and thus 
 the weighted Gaussian measures 
 associated with the modified energies $\EE_{N}(v)$
  are not probability measures.
An important observation is that 
our proof of quasi-invariance is entirely local
in $H^{s-\frac{1}{2}-\eps}(\T)$
(see Subsection~\ref{SUBSEC:3.6}).
This allows us to work with 
the weighted Gaussian measures 
restricted to  
 compact sets in $H^{s-\frac{1}{2}-\eps}(\T)$, 
 for which we prove strong convergence
 (see Proposition~\ref{PROP:ma}).
We then follow the approach 
introduced by 
 Planchon, Tzvetkov,  and Visciglia~\cite{PTV}, 
 where they established local-in-time (and also local in the phase space) quasi-invariance properties, 
 and close the argument.

Since our argument is based on the study of frequency-truncated dynamics
(see \eqref{4NLS4}), 
an  approximation property of the truncated dynamics (Proposition \ref{PROP:app2}) 
also plays a key role.
In $L^2(\T)$, a standard contraction argument yields local well-posedness of \eqref{4NLS2}. 
By a slight variation of this contraction argument, 
 one can easily prove the desired approximation properties of the truncated dynamics 
 in $L^2(\T)$ (see \cite[Appendix B]{OTz}).
In negative Sobolev spaces, however, 
we can not use a  contraction argument to establish local well-posedness of
\eqref{4NLS2} due to the failure of local uniform continuity
of the solution map \cite{CO2012, OTz}. 
Hence, a more careful argument is required
in  studying approximation properties of the truncated dynamics. 
In fact, in negative Sobolev spaces, 
we only prove a weaker  
 approximation property of the truncated dynamics. See Remark~\ref{REM:app}.    
In Section \ref{SEC:app}, 
we discuss in detail the approximation property 
of the truncated dynamics in negative Sobolev spaces.

\begin{remark}\rm
In \cite{PTV}, the authors compared their approach
and the normal form approach in~\cite{OTz, OST}
and stated ``It would be interesting to find situations where the approaches of 
[\,\cite{OTz, OST}\,] and the one used in [\,\cite{PTV}\,]  can collaborate."
Our proof of Theorem \ref{THM:1} provides
the first such example, combining the methods from \cite{PTV} and \cite{OTz, OST}.
\end{remark}

\begin{remark} \rm
In \cite{OTzW}, Tzvetkov, Y.~Wang, and the first author
constructed global-in-time dynamics 
 for \eqref{4NLS2} almost surely with respect to the white noise, i.e.~the Gaussian measure $\mu_s$ with $s=0$.
  They also proved invariance of the white noise $\mu_0$ under \eqref{4NLS2}, which in particular implies its  quasi-invariance.
Thus, it is an interesting question to fill in the gap $0 < s \leq \frac{3}{10}$
between Theorem \ref{THM:1} and the result in \cite{OTzW}.
  
\end{remark}

\begin{remark}\rm
In \cite{LSZ}, 
  the second author with G.~Li and Zine recently proved global well-posedness of 
the following renormalized fractional NLS on $\T$ (for $\al > 2$):
\begin{align}
\textstyle
i\dt u=(-\dx^2)^\frac{\al}{2} u +\big(| u |^2-2\fint_{\T} | u |^2\,dx \big)u
\label{4NLS3}
\end{align}

\noi
in   $H^\s(\T)$ for $\s > \frac {2-\al}{6}$.
While we only consider the renormalized 4NLS \eqref{4NLS2}
 in this paper for simplicity of presentation, 
  our argument can be easily adapted to 
 study
the quasi-invariance property of $\mu_s$ under the dynamics of 
\eqref{4NLS3}
for some range of $s \le \frac 12$.

 \end{remark}

\begin{remark} \rm 

At each step  of  normal form reductions,  
we introduce a correction term.
This is precisely how correction terms are introduced in 
 the $I$-method \cite{CKSTT2003}.
In order to prove the  energy estimate (Proposition \ref{PROP:energy}), 
we implement an  infinite iteration of normal form reductions
and thus introduce an infinite series of correction terms.
In other words, the modified energies
$\EE_{N}(v)$ defined in \eqref{E2}
can be viewed as  modified energies of an infinite order in the $I$-method terminology.
Finally, we remark that
this infinite iteration of normal form reductions
allows us to encode multilinear dispersion
in the structure of the modified energy 
and thus to 
exchange analytical difficulty
with algebraic\,/\,combinatorial  difficulty.  
 
 \end{remark}

\subsection{Organization} 
In Section \ref{SEC:not}, we introduce some notations. 
In Section \ref{SEC:pf}, by assuming 
the approximation property of the truncated dynamics (Proposition \ref{PROP:app2}) and 
the energy estimate (Proposition \ref{PROP:energy})
with the related normal form reductions, we prove Theorem~\ref{THM:1}. 
In Section~\ref{SEC:app}, we discuss the approximation property of 
the truncated
dynamics.  
In Section \ref{SEC:NF}, we then establish the energy estimate (Proposition \ref{PROP:energy}) by implementing  an infinite iteration of normal form reductions.

\section{Notations}\label{SEC:not}
In the following, we fix small $\eps>0$  and set
\begin{align}
\s = s-\frac{1}{2}-\eps
\label{sig}
\end{align}

\noi
such that \eqref{reg1} is satisfied.
Given $R > 0$, we use $B_R$ to denote the ball of radius $R$ in $H^\s(\T)$
centered at the origin.

Given $N \in  \N \cup\{\infty\}$, we use $\P_{\leq N}$ to denote the Dirichlet projection onto the frequencies 
$\{|n| \leq N \}$ and set $\P_{>N}:=\text{Id}-\P_{\leq N}$. When $N=\infty$, 
it is understood that $\P_{\leq N}=\text{Id}$. Define $E_N$  by
\begin{align*}
E_N=\P_{\leq N}H^{\s}(\T)=\text{span} \{e^{inx}:\vert n \vert \leq N\}
\end{align*}

\noi
and let $E_N^\perp$ be the orthogonal complement of $E_N$
in $H^\s(\T)$.

Given $s\in \R$, let $\mu_s$ be the Gaussian measure on $H^{s-\frac{1}{2}-\eps }(\T)$ defined in \eqref{gauss1}. Then, we can write $\mu_s$ as
\begin{align}
\mu_s=\mu_{s,N}\otimes \mu_{s,N}^\perp,
\label{mu2}
\end{align}

\noi
where $\mu_{s,N}$ and $\mu_{s,N}^\perp$ are the marginal distributions of $\mu_s$ restricted onto $E_N$ and $E_N^\perp$, respectively. In other words, $\mu_{s,N}$ and $\mu_{s,N}^\perp$ are induced probability measures under the following random Fourier series:
\begin{align*}
\P_{\le N} u&:\omega \in \Omega \longmapsto 
\P_{\le N} u(x;\omega)=\sum\limits_{ \vert n \vert \leq N}\frac{g_n(\omega)}{\jb{n}^s}e^{inx},\\
\P_{> N} u&:\omega \in \Omega \longmapsto 
\P_{> N} u(x;\omega)=\sum\limits_{\vert n \vert >N}\frac{g_n(\omega
)}{\jb{n}^s}e^{inx},
\end{align*}

\noi
respectively. Formally, we can write $\mu_{s,N}$ and $\mu_{s,N}^\perp$ as
\begin{align}
d\mu_{s,N}=Z_{s,N}^{-1}e^{-\frac{1}{2} \| \P_{\leq N}u \|_{H^s}^2   }du_N 
\qquad \text{and} \qquad 
d\mu_{s,N}^\perp=\widehat{Z}_{s,N}^{-1}e^{-\frac{1}{2}\| \P_{>N}u\|_{H^s }^2  }du_N^\perp, 
\label{mu3}
\end{align}

\noi
where $d u_N$ and  $d u_N^\perp$ are (formally) the products of the Lebesgue measures
on the Fourier coefficients:
\begin{align}
du_N = \prod_{|n| \le N} d \ft u(n) 
\qquad \text{and} \qquad 
du_N^\perp = \prod_{|n| >  N} d \ft u(n) .
\label{mu4}
\end{align}

\noi
Given a function $u\in H^{s-\frac{1}{2}-\eps }(\T)$, 
we may use $u_n$ to denote the Fourier coefficient $\ft u(n)$  of $u$, when there is no confusion. 
This shorthand notation is useful in Section \ref{SEC:NF}.

We use $S(t)$ to denote the linear propagator for the fourth order Schr\"odinger equation:
\begin{align*}
S(t)  = e^{- i t \dx^4 }.
\end{align*}

\noi
We denote by $\NN(u)$ the renormalized nonlinearity in \eqref{4NLS2}:
\begin{align}
\textstyle
\NN(u)  = \big(| u |^2-2\fint_{\T} | u |^2\,dx \big)u.
\label{non1}
\end{align}

\noi
We also define the phase function $\phi(\bar n)$ by 
\begin{align}
\phi(\bar n)=\phi(n_1,n_2,n_3,n )=n_1^4-n_2^4+n_3^4-n^4.
\label{phi1}
\end{align}

\noi
Then, recall from \cite{OTz} that 
\begin{align}
\phi(\bar n )=(n_1-n_2)(n_1-n)
\Big(n_1^2+n_2^2+n_3^2+n^2+2(n_1+n_3)^2\Big)
\label{phi2}
\end{align}

\noi
under $n=n_1-n_2+n_3$.
Lastly, given $n \in \Z$ and $N \in \N$, 
we  define the index sets $\G(n)$ and $\G_N(n)$  by 
\begin{align}
\G(n)=\big\{(n_1,n_2,n_3)\in \Z^3: n=n_1-n_2+n_3 
\text{ and } n_1,n_3\neq n \big\}
\label{G1}
\end{align}

\noi
and
\begin{align}
\G_N(n)=\big\{(n_1,n_2,n_3)\in \Z^3: |n_j| \leq N, \ n=n_1-n_2+n_3 
\text{ and } n_1,n_3\neq n \big\}.
\label{G2}
\end{align}

\noi
Note that $\phi(\bar n) \ne 0$ on $\G(n)$ and $\G_N(n)$.

Given $T > 0$, we  use the following shorthand notation:
$C_TH^\s_x = C([0, T]; H^\s(\T)) $, etc.

In view of the time reversibility of the equation \eqref{4NLS2}, we only consider positive times in the following.

\section{Proof of the main result}\label{SEC:pf}

In this section, we go over the proof of Theorem \ref{THM:1}
by assuming (i) the approximation property of the truncated dynamics
(Proposition \ref{PROP:app2})
and (ii) the energy estimate (Proposition~\ref{PROP:energy})
and the analysis on the correction terms (Lemma \ref{LEM:E2}).
We present the proofs of 
Propositions~\ref{PROP:app2}
and~\ref{PROP:energy}
in Sections \ref{SEC:app} and \ref{SEC:NF}, respectively.
While
 we follow closely the structure of Section 3 in \cite{OST}, 
 we avoid using the interaction representation
 $v(t) = S(-t) u(t)$ in this section so that 
 the modified energies and the associated weighted Gaussian measures 
are not time-dependent.
Compare this with  \cite{OTz, OST}, 
where 
 the modified energies and the associated weighted Gaussian measures 
were time-dependent.

In the following, we fix $\frac 3{10} < s \le \frac 12$ and set $\s = s - \frac 12 - \eps$
for some small $\eps > 0$, unless otherwise stated.

\subsection{Truncated dynamics}

Given $N \in \N$, 
we consider the following truncated version of the renormalized 4NLS:
\begin{align}
\textstyle
i\dt u=\dx^4u +\P_{\le N} \NN(\P_{\le N} u) , 
\label{4NLS4}
\end{align}

\noi
where $\NN(u)$ is as in \eqref{non1}.
Note that \eqref{4NLS4} is not a finite-dimensional system of ODEs, 
when written on the Fourier side.
The higher frequency part $\P_{> N}u$ is propagated by the linear flow.

Given initial data $u_0 \in H^\s(\T)$, 
we can write $u_0 = \P_{\le N } u_0 + \P_{>N} u_0$.
Then,  the $L^2$-global well-posedness 
of the (renormalized) 4NLS \cite{OTz} yields
a global-in-time solution $u_N $ to the low frequency dynamics:
\begin{align}
\begin{cases}
i\dt u_N=\dx^4 u_N  +\P_{\le N} \NN(u_N )  \\
u_N |_{ t= 0} =  \P_{\le N} u_0, 
\end{cases}
\label{4NLS5}
\end{align}

\noi
while the high frequency dynamics with initial data 
$\P_{>N} u_0$ evolves  linearly  and hence is globally well-posed.
We denote by 
 $\Phi_N(t)$  the flow map of 
 the truncated dynamics \eqref{4NLS4} at time $t$:
$u(0) \in  H^{\s}(\T)  \to u(t) \in H^{\s}(\T) $.
We also denote by  $\Phi(t)$  the flow map 
to the renormalized 4NLS~\eqref{4NLS2}, 
constructed in \cite{OW1}.

We first record  the following uniform (in $N$) growth bound.
This estimate
 essentially follows from the growth bound on solutions to 
the renormalized cubic 4NLS \eqref{4NLS2}
in negative Sobolev spaces
\cite{OW1}.

 \begin{lemma}\label{LEM:growth}	
Let $\s > - \frac{1}{3}$.
Given any  $R>0$ and $T>0$, there exists $C(R,T)>0$ such that 
\begin{align*}
\Phi_N(t)(B_R)\subset B_{C(R,T)}
\end{align*}

\noi
for any $t\in [0,T]$ and  $N\in\N\cup\{\infty\}$, 
with the understanding that  $\Phi_\infty = \Phi$.
Here,  $B_R$ denotes the ball of radius $R$ in $H^\s(\T)$
centered at the origin.

\end{lemma}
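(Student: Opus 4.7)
The plan is to reduce to the growth bound for the full renormalized 4NLS \eqref{4NLS2} proved in \cite{OW1} by exploiting the fact that the truncated dynamics \eqref{4NLS4} decouples neatly into a low-frequency piece governed by \eqref{4NLS5} and a linear high-frequency piece. The case $N = \infty$ is literally the statement of the growth bound for \eqref{4NLS2} established in \cite{OW1}, so the entire content of the lemma is in the uniformity of the estimate as $N$ ranges over $\N$.

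First I would decompose any solution $u = \Phi_N(t) u_0$ of \eqref{4NLS4} as $u(t) = u^{\flat}(t) + u^{\sharp}(t)$ with $u^{\flat} = \P_{\le N} u$ and $u^{\sharp} = \P_{> N} u$. Since the nonlinearity in \eqref{4NLS4} is supported in frequencies $|n| \le N$, the high-frequency part satisfies $i \partial_t u^{\sharp} = \partial_x^4 u^{\sharp}$ and hence $u^{\sharp}(t) = S(t) \P_{>N} u_0$, giving the trivial bound $\|u^{\sharp}(t)\|_{H^\sigma} = \|\P_{>N} u_0\|_{H^\sigma} \le R$ for all $t \in \R$. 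Thus it suffices to control $\|u^{\flat}(t)\|_{H^\sigma}$, where $u^{\flat}$ solves the low-frequency system \eqref{4NLS5} with initial data $\P_{\le N} u_0 \in B_R \cap E_N$.

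Next I would observe that \eqref{4NLS5} is structurally identical to \eqref{4NLS2}, with the single modification that the nonlinearity is pre- and post-composed with $\P_{\le N}$ (since any solution automatically lives in $E_N$, the equation can be written as $i \partial_t u^{\flat} = \partial_x^4 u^{\flat} + \P_{\le N} \NN(u^{\flat})$). The well-posedness scheme of \cite{OW1} — based on an infinite iteration of normal form reductions and an energy-type estimate at the $H^\sigma$ level for $\sigma > -\tfrac13$ — goes through verbatim once one notes that all the multilinear estimates used there are unaffected by inserting frequency projections $\P_{\le N}$, since $\P_{\le N}$ is a contraction on every $H^\sigma(\T)$ and on every auxiliary function space involved. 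Consequently, one obtains an $N$-independent growth bound of the form $\|u^{\flat}(t)\|_{H^\sigma} \le F(\|\P_{\le N} u_0\|_{H^\sigma}, |t|) \le F(R, T)$ for $t \in [0, T]$, with the same function $F$ as in the limiting case. Combining this with the trivial bound on $u^{\sharp}$ and setting $C(R, T) := R + F(R, T)$ yields the claim, including the case $N = \infty$ where $u^{\sharp} \equiv 0$.

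The only potentially delicate point is verifying the uniformity in $N$ of the quasi-conservation bound from \cite{OW1}. This is the main thing to check, but it is essentially bookkeeping: the normal form reductions of \cite{OW1} are carried out on multilinear expressions indexed by convolution constraints, and inserting the characteristic function $\mathbf{1}_{\{|n_j| \le N\}}$ at each leaf only reduces the relevant sums, so every estimate holds with the same constants. Once this uniformity is recorded, the conclusion of the lemma follows immediately.
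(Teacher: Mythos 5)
Your proof is correct and matches the paper's approach: the paper merely remarks that Lemma~\ref{LEM:growth} ``essentially follows from the growth bound on solutions to the renormalized cubic 4NLS \eqref{4NLS2} in negative Sobolev spaces \cite{OW1},'' and a footnote in Subsection 4.2 confirms that the $N$-uniformity comes precisely from the uniform boundedness of $\P_{\le N}$ on the relevant function spaces, which lets the argument of \cite{OW1} (and \cite{Kwak,LSZ}) apply verbatim to \eqref{4NLS4}. Your explicit decomposition of $\Phi_N(t)u_0$ into the linearly evolving high-frequency piece $\P_{>N}u$ and the low-frequency piece solving \eqref{4NLS5}, followed by the reduction to the $N$-independent quasi-conservation estimate, is exactly the intended expansion of that remark.
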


Next, we state
 the  approximation property of the truncated dynamics \eqref{4NLS4}.

\begin{proposition}
\label{PROP:app2}
Let $\s > - \frac{1}{3}$.
Given $R > 0$, 
let $A\subset B_R$ be a compact set in $H^\s(\T)$.
Given  $t\in \R$, $u_0 \in A$, and small $\dl > 0$, 
 there exists $N_0=N_0(t,R, u_0, \dl)\in \N $ such that 
\begin{align*}
\Phi(t) (u_0) \in \Phi_N(t)(A+B_\dl)
\end{align*}

\noi
for any $N\geq N_0$.
\end{proposition}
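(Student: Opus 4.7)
The plan is to exhibit a concrete preimage of $\Phi(t)(u_0)$ under $\Phi_N(t)$ lying in $A + B_\dl$. Fix $u_0 \in A$ and $t > 0$, and set $v = \Phi(t)(u_0)$, which lies in $H^\s(\T)$ by Lemma~\ref{LEM:growth}. Using the time-reversibility of \eqref{4NLS2} and \eqref{4NLS4}, define
\[
w_N := \Phi_N(-t)(v) \in H^\s(\T).
\]
Then $\Phi_N(t)(w_N) = v = \Phi(t)(u_0)$ by the group property of the truncated flow. Since $u_0 \in A$, the membership $w_N \in A + B_\dl$ is implied by $\|w_N - u_0\|_{H^\s} < \dl$. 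Hence it suffices to prove
\[
w_N = \Phi_N(-t)(v) \longrightarrow \Phi(-t)(v) = u_0 \quad \text{in } H^\s(\T) \text{ as } N \to \infty,
\]
i.e., the proposition reduces to pointwise convergence of the backward truncated flow to the backward full flow at the single point $v$.

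To establish this convergence, one first uses Lemma~\ref{LEM:growth} (applied on the time interval $[-t,0]$) to see that $\{w_N\}_{N \in \N}$ is uniformly bounded in $H^\s(\T)$. By Rellich-Kondrachov, any subsequence has a further subsequence $w_{N_k}$ converging weakly in $H^\s$ and strongly in $H^{\s'}(\T)$ for some $\s' < \s$, to a limit $w^*$. Looking at the trajectories $s \mapsto \Phi_{N_k}(s)(w_{N_k})$ on $[-t,0]$ in their mild (Duhamel) form and passing to the limit using the multilinear estimates underlying the well-posedness in \cite{OW1}, one produces a solution of the renormalized 4NLS~\eqref{4NLS2} starting from $w^*$ and equaling $v$ at time $t$. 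Uniqueness for \eqref{4NLS2} in $H^\s$ from \cite{OW1} then forces $w^* = \Phi(-t)(v) = u_0$, and since the subsequential limit is independent of the chosen subsequence, the entire sequence $w_N$ converges to $u_0$, at least in the weak $H^\s$ and strong $H^{\s'}$ sense.

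The principal obstacle is upgrading this to strong convergence in $H^\s(\T)$, which the proposition actually requires. A direct appeal to continuous dependence is blocked by the failure of local uniform continuity of the solution map in negative Sobolev spaces \cite{CO2012, OTz}; consequently, the dependence of $N_0$ on $u_0$ cannot be removed. To circumvent the lack of Lipschitz control, one carries out a more refined analysis of the difference $\Phi_N(-s)(v) - \Phi(-s)(v)$ at the critical regularity level, using the normal form reductions and gauge/frequency splitting from \cite{OW1} to quantify the error in the $H^\s$ norm for each fixed $v$. This quantitative pointwise error control constitutes the technical content of Section~\ref{SEC:app} and is exactly the ``weaker approximation property'' alluded to in the introduction: the convergence is pointwise rather than uniform on bounded sets, but it is strong in the correct topology $H^\s$, which is precisely what is needed to conclude $\|w_N - u_0\|_{H^\s} < \dl$ for $N \geq N_0(t, R, u_0, \dl)$.
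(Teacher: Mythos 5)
Your opening reduction matches the paper: write $w_N = \Phi_N(-t)\Phi(t)(u_0)$, note $\Phi_N(t)(w_N) = \Phi(t)(u_0)$ by the group property, and reduce the proposition to $\|w_N - u_0\|_{H^\s} \to 0$. From that point on, however, your argument has a genuine gap rather than a proof.

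The compactness step does not yield the claimed convergence. Rellich--Kondrachov gives a subsequence converging strongly only in $H^{\s'}(\T)$ for $\s' < \s$ and weakly in $H^\s(\T)$; after identifying the limit as $u_0$ by passing to the limit in the Duhamel formulation and invoking uniqueness, you still only have weak-$H^\s$ (plus strong-$H^{\s'}$) convergence of the full sequence. You acknowledge this, but the sentence that is supposed to upgrade it---``a more refined analysis \ldots using the normal form reductions and gauge/frequency splitting from \cite{OW1}''---is not an argument. It is a pointer to the technical content you were asked to supply. In particular you never establish the analogue of the paper's Lemma~\ref{LEM:app1}, which is where essentially all of the work lives: a quantitative bound of the form $\|\Psi(t)(u_0) - \Psi_N(t)(u_0)\|_{H^\s} \to 0$ obtained from the trilinear estimate (Lemma~\ref{LEM:T1}), the ``double-time-integral'' estimates (Lemmas~\ref{LEM:T2} and~\ref{LEM:T3}, i.e., Propositions~3.4 and~3.8 in \cite{Kwak}), and an iterated Gronwall argument over short time intervals, made possible by the fact that both flows emanate from the same initial datum (Remark~\ref{rem:app2}).

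There is also a structural difference worth noting. You try to compare the two backward flows $\Phi_N(-s)(v)$ and $\Phi(-s)(v)$ directly. The paper instead proves the forward approximation (Lemma~\ref{LEM:app1}) comparing $\Phi_N(t)(u_0)$ with $\Phi(t)(u_0)$, and then transfers it to the backward direction using a H\"older-type continuity bound for the truncated flow $\Phi_N(-t)$ coming from the uniform growth estimate (\cite[Proposition~6.6]{OW1}):
\begin{align*}
\|w_N - u_0\|_{H^\s}
= \big\|\Phi_N(-t)\big(\Phi(t)(u_0)\big) - \Phi_N(-t)\big(\Phi_N(t)(u_0)\big)\big\|_{H^\s}
\leq C(t)\,\|\Phi(t)(u_0)-\Phi_N(t)(u_0)\|_{H^\s}^{c(\s)}.
\end{align*}
This H\"older continuity is a key ingredient that your proposal omits. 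Without either it or a genuine proof of the backward-flow approximation, the strong $H^\s$ convergence of $w_N$ to $u_0$ is not established, and the proposition does not follow.
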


We present 
the proof of Proposition \ref{PROP:app2} in Section~\ref{SEC:app}.

\begin{remark}\label{REM:app}
\rm

(i) 
It is possible to state Proposition \ref{PROP:app2}
without referring to a compact set $A$.
In fact, there exists 
 $N_0=N_0(t, u_0, \dl)\in \N $ such that 
$\Phi(t) (u_0) \in \Phi_N(t)(u_0+B_\dl)$
for any $N \ge N_0$.
We, however, stated Proposition \ref{PROP:app2}
as above so that the statement can be easily compared
with the corresponding statement in the $L^2$-setting; 
see  \cite[Proposition B.3/6.21]{OTz}.

\smallskip

\noi
(ii)
We point out that Proposition \ref{PROP:app2}
is weaker than the 
approximation property of the truncated dynamics in $L^2(\T)$,
which played a key role in the previous works \cite{OTz, OST}.
Due to the lack of local uniform continuity 
of the solution map in negative Sobolev spaces, 
the rate of approximation $N_0$ depends on the initial data $u_0$
in Proposition \ref{PROP:app2}, 
while, in $L^2(\T)$, $N_0$ does not depend on $u_0 \in A$;
 see \cite[Proposition B.3/6.21]{OTz}.
In particular, we do not know if 
we have 
$ \Phi(t) (A) \subset  \Phi_N(t)(A+B_\dl)$
for any sufficiently large $N \gg 1$.
This is different from the situation considered in \cite{PTV},
thus requiring a careful implementation of the argument.
See Subsection \ref{SUBSEC:3.6}.

We also point out that, in \cite{OTz}, 
the continuity of the solution map from
$L^2(\T)$  to the (local-in-time) $X^{0, b}$-space
was implicitly used to 
control the  high frequency part $ \P_{>N} \Phi(t)(u_0)  $ of the solution, uniformly in $u_0$
belonging to a compact set  $A\subset L^2(\T)$;
see \cite[Lemma B.1/6.19]{OTz}.
In negative Sobolev spaces, however, 
we do not know\footnote{Recall that 
the solutions constructed in \cite{OW1} belong to the short-time $X^{\s, b}$-space, 
while those constructed in~\cite{Kwak} belong to the modified $X^{\s, b}$-space which depends on initial data; see \eqref{X8} below.
In particular, we do not know if the solution map is continuous from $H^\s(\T)$ 
into the standard $X^{\s, b}$-space if $\s < 0$.} how to obtain
such a uniform control on 
the high frequency part 
$ \P_{>N} \Phi(t)(u_0)  $
for $u_0$ belonging to a compact set  $A \subset  H^\s(\T)$.

\end{remark}

\subsection{Energy estimate.} 
In this subsection, we introduce a modified $H^s$-energy functional
and state the crucial energy estimate in negative Sobolev spaces
(Proposition \ref{PROP:energy})
whose proof is presented in 
 Section \ref{SEC:NF}.

Let $N \in \N\cup\{\infty\}$.
We say that 
 $u$ is a solution to  \eqref{4NLS5}
 if $u$ is a solution to \eqref{4NLS5} 
 when $N \in \N$ and to \eqref{4NLS2} when $N = \infty$.
 Then, by iteratively applying normal form reductions as in \cite{OST}, 
we formally\footnote{For each finite $N \in \N$, 
any solution to \eqref{4NLS5} is smooth and thus the computation leading to \eqref{E1} 
does not require any justification.  See Section \ref{SEC:NF}.}
obtain the following identity:\footnote{Hereafter, we use the following shorthand notation
for multilinear form:
$  \NN^{(j)}_{0, N}(u)
= \NN^{(j)}_{0, N}(u, \dots, u)$, etc.}  
\begin{align}
\frac{d}{dt}\bigg(\frac{1}{2} \| u(t) \|_{H^s}^2  \bigg)
=\frac{d}{dt}\bigg(\sum_{j=2}^\infty \NN_{0,N}^{(j)}(u)(t)  \bigg)
+\sum_{j=2}^\infty \NN^{(j)}_{1,N}(u)(t)
+\sum_{j=2}^\infty \RR_N^{(j)}(u)(t)
\label{E1}
\end{align}	

\noi
for any (smooth) solution $u$ to the finite-dimensional truncated dynamics \eqref{4NLS5}
(i.e.~the low frequency part of \eqref{4NLS4}).
Here, 
$\NN_{0,N}^{(j)} $ is a $2j$-linear form 
and   $\NN^{(j)}_{1,N} $
and $\RR_N^{(j)} $ are $(2j+2)$-linear forms.
This motivates us to define the following modified energy:
\begin{align}\label{E2}
\EE_{N}(u):=\frac{1}{2} \| u \|_{H^s}^2-\sum\limits_{j=2}^\infty\NN_{0,N}^{(j)}(u)(t). 
\end{align}

\noi
When $N = \infty$,  we simply denote $\EE_{\infty}(u)$ by $\EE(u)$ and 
also drop the subscript $N=\infty$ from the multilinear forms; for example, we write $\NN_0^{(j)}$ for $\NN_{0,\infty}^{(j)}$.

We now state the energy estimate.

\begin{proposition}[energy estimate]
\label{PROP:energy}	
Let $\frac{3}{10}<s\leq \frac{1}{2}$ and $\s = s - \frac 12 - \eps$ for some small $\eps > 0$.
Then, given any $R > 0$ and $T>0$, the following energy estimate holds
uniformly in $N \in \N \cup\{\infty\}$\textup{:}
\begin{align*} 
\sup_{t \in [0, T]} \bigg| \frac{d}{dt} \EE_{N}(u)(t)  \bigg| \leq C_s(R)
\end{align*}

\noi
for any solution $u \in C(\R; H^\s(\T))$ to \eqref{4NLS5}, satisfying  the growth bound\textup{:}
\begin{align}
\sup_{t\in [0,T]}\|u(t) \|_{H^{\s}}\leq R.
\label{E3b}
\end{align} 

\end{proposition}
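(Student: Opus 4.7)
\smallskip
\noindent\textbf{Proof plan.}
\smallskip

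The plan is to differentiate $\tfrac12\|u(t)\|_{H^s}^2$ along \eqref{4NLS5}, pass to the interaction representation $v_n(t)=e^{itn^4}\widehat u_n(t)$ in which the time dependence of each cubic monomial is concentrated in the oscillatory factor $e^{-it\phi(\bar n)}$, and then integrate by parts in time repeatedly. A direct computation, using the renormalization built into $\NN$ in \eqref{non1} together with the non-resonance restrictions defining $\G_N(n)$ in \eqref{G2}, gives
\begin{equation*}
\frac{d}{dt}\Big(\tfrac12\|u(t)\|_{H^s}^2\Big)
= \Im\!\sum_n \sum_{(n_1,n_2,n_3)\in\G_N(n)} \jb{n}^{2s}\, e^{-it\phi(\bar n)}\, v_{n_1}\overline{v_{n_2}}\, v_{n_3}\,\overline{v_n},
\end{equation*}
the purely resonant piece coming from $-|\widehat u_n|^2\widehat u_n$ contributing a vanishing imaginary part. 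Using $e^{-it\phi(\bar n)}=\frac{d}{dt}\bigl(\tfrac{i}{\phi(\bar n)}e^{-it\phi(\bar n)}\bigr)$, a first integration by parts in $t$ produces a boundary term of the form $\frac{d}{dt}\NN_{0,N}^{(2)}(u)$, which is absorbed into the modified energy \eqref{E2}, plus a sextilinear volume term coming from $\partial_t$ falling on one of the $v_{n_k}$'s.

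Iterating this step, as in \cite{OST, GKO}, at stage $j$ the new volume term is split into two parts: the piece $\NN_{1,N}^{(j)}$, on which the accumulated phase is comparable to the largest available frequency combination and for which no further integration by parts is attempted (to avoid additional $1/\phi$-derivative losses), and the remainder $\RR_N^{(j)}$, on which the accumulated phase is ``small'' and which is fed back into another normal form reduction. This yields the identity \eqref{E1}. For every finite $N$ the truncated solution to \eqref{4NLS5} is smooth, so each integration by parts is justified pointwise in $t$; the case $N=\infty$ will be obtained by passing to the limit in the uniform multilinear bounds below, combined with Proposition \ref{PROP:app2}.

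The analytical core is then to establish multilinear bounds of the form
\begin{equation*}
\bigl|\NN_{0,N}^{(j)}(u)\bigr|+\bigl|\NN_{1,N}^{(j)}(u)(t)\bigr| + \bigl|\RR_N^{(j)}(u)(t)\bigr|\;\le\; c_j\,R^{\,2j+2}
\end{equation*}
uniformly in $N\in\N\cup\{\infty\}$ and $t\in[0,T]$, whenever $u$ satisfies \eqref{E3b}, with $\sum_j c_j<\infty$. The main input is the factorization \eqref{phi2}, which on $\G(n)$ yields $|\phi(\bar n)|\gtrsim |n_1-n_2|\,|n_1-n|\,(n_1^2+n_2^2+n_3^2+n^2)$, providing a fourth-order dispersive gain per normal form step. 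Combined with $\ell^2$-duality, Cauchy--Schwarz, and the counting estimates of~\cite{OST}, this reduces each $\NN_{0,N}^{(j)}$, $\NN_{1,N}^{(j)}$, and $\RR_N^{(j)}$ to a product over the nodes of the binary tree generated by the iteration (indexed by the ordered pairs of parent frequencies $r_1,r_2$ appearing in the figures of the preamble), and the combinatorial bookkeeping on these trees ensures $j$-summability.

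The main obstacle is to push the $L^2$-based multilinear estimates of \cite{OST} down to $H^\sigma$ with $\sigma=s-\tfrac12-\eps<0$: at each node of the tree one must now absorb extra negative powers $\jb{n_k}^{-\sigma}$ against the gain from $\phi$, and a careful case analysis depending on whether $|n_1-n_2|$, $|n_1-n|$, or the quadratic factor in \eqref{phi2} dominates is required. The regularity budget is tight, and a direct computation on the base ($j=2$) estimate shows that the scheme closes precisely for $\sigma>-\tfrac15$, i.e.\ $s>\tfrac{3}{10}$, which is exactly the threshold in the statement; as in \cite{OTz,OST}, this restriction comes entirely from the energy estimate rather than from the measure construction. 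Once these bounds are in place, summing in $j$ and taking the supremum in $t\in[0,T]$ yields the desired $C_s(R)$, uniform in $N$.
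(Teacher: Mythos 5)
Your sketch captures the high-level shape of the argument --- pass to the interaction representation, iterate integration by parts in time, absorb boundary terms into the modified energy, and establish $j$-summable multilinear bounds --- but two substantive parts of the plan are wrong.

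First, the decomposition at each stage is misassigned. In the paper's iteration the non-resonant contribution $\Ns^{(j+1)}$ produced at stage $j$ is split into the restriction $\Ns^{(j+1)}_1$ to the set $A_j$ defined in \eqref{Cj}, where the accumulated phase $\wt\phi_{j+1}=\sum_{k\le j+1}\phi_k$ is \emph{not} much larger than $\max(|\wt\phi_j|,|\phi_1|)$, and the remainder $\Ns^{(j+1)}_2$ supported on $A_j^c$, where the accumulated phase \emph{is} large. The small-phase part $\Ns^{(j+1)}_1$ is estimated directly (integration by parts would gain nothing, since $1/\wt\phi_{j+1}$ is not small there); the large-phase part $\Ns^{(j+1)}_2$ is the one fed into the next normal form reduction and ultimately vanishes (Lemma~\ref{LEM:N^J+1_2}). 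Your plan inverts this logic: you would iterate on the small-phase piece, where the $1/\phi$ factor offers no gain, so the iteration does not close. Moreover the terms $\Rs^{(j+1)}$ in \eqref{E1}/\eqref{E8} are neither of these: they arise from substituting the diagonal resonant part $\RR(v)_{n_b}=i|v_{n_b}|^2v_{n_b}$ of \eqref{4NLS8} for $\dt v_{n_b}$, not from a phase-size dichotomy, and they are never fed back into the iteration. At each stage there are thus \emph{four} families ($\Ns_0$, $\Rs$, $\Ns_1$, $\Ns_2$), not two.

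Second, and more seriously, you propose to close the estimates with ``the counting estimates of [OST].'' As stated explicitly in Remark~\ref{REM:NF}, the divisor-counting argument of \cite{OST} was tailored to $\sigma=0$ and does not supply the derivative gain needed once $\sigma<0$. The paper instead follows \cite{OW1}: one factorizes the fourth-order phase as $|\phi_j|\sim (n^{(j)}_{\max})^2\,|\mu_j|$ (see \eqref{MU2}, with $\mu_j$ the second-order phase), spends the factor $(n^{(j)}_{\max})^2$ to absorb the negative weights $\jb{n^{(j)}_k}^{-\sigma}$, and sums $1/|\mu_j|^{1+\delta}$ in $\ell^1$, with an interpolation parameter $\alpha$ making the constraints in \eqref{YN2}--\eqref{YN3} (for $j=2$) and \eqref{YN10}--\eqref{YN11} (for general $j$) coincide at $s>\frac3{10}$. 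You do identify the correct threshold $s>\frac3{10}$ and correctly attribute it to the energy estimate, but the mechanism producing it is this interpolation between $|\mu_1|$ and $|\mu_{j+1}|$ against the weight $(n_{\max})^{4s-6\sigma}/|\phi|^2$, not a counting lemma from \cite{OST}; without replacing that input your scheme would not close below $L^2(\T)$.
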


We also record the following bound on the correction terms.
Set 
\begin{align}
 \Sf_N (u) : = \sum_{j=2}^{\infty} \NN^{(j)}_{0,N}(\P_{\leq N}u )
 \label{E3a}
\end{align}

\noi
for $N \in \N \cup\{\infty\}$.

\begin{lemma}
\label{LEM:E2}
Let $\frac{1}{6}<s\leq \frac{1}{2}$.
Then,  given any  $R>0$, 
 there exists $C_s=C_s(R) > 0$ such that 
\begin{align}
| \Sf_N(u)|
\leq C_s(R) 
\label{E4}
\end{align} 

\noi
for any $ u \in B_R\subset H^\s(\T)$
and $N \in \N \cup\{\infty\}$.
Furthermore, 
$\Sf_N(u)$ converges to $\Sf_\infty(u)$ as $N \to \infty$
for each $u \in B_R$.
\end{lemma}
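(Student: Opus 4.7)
The plan is threefold: (i) extract from the normal form reduction to be developed in Section~\ref{SEC:NF} the explicit $2j$-linear Fourier structure of $\NN^{(j)}_{0,N}(u)$, whose denominator is a product of $j-1$ partial phase sums arising from iterated time-integration by parts; (ii) establish a summable per-order estimate $|\NN^{(j)}_{0,N}(u)|\le C_s\,c_s(R)^{\,j}$, uniformly in $N\in\N\cup\{\infty\}$, for every $u\in B_R\subset H^\s(\T)$, with $\sum_{j\ge 2}c_s(R)^{\,j}<\infty$; (iii) pass to the limit $N\to\infty$ termwise and conclude by dominated convergence for series. With step~(ii) in hand, the uniform bound~\eqref{E4} follows immediately by summing in $j$, and the pointwise convergence $\Sf_N(u)\to\Sf_\infty(u)$ then falls out of step~(iii).

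For step~(ii) I would work entirely on the Fourier side. Using the factorization~\eqref{phi2}, each phase $\phi(\bar n^{(k)})$ in the denominator splits as $(n_1-n_2)(n_1-n)\cdot Q(\bar n^{(k)})$ with $Q$ essentially of size $(n_{\max}^{(k)})^2$, so every integration by parts supplies a gain of roughly $(n_{\max}^{(k)})^{-4}$. This gain must absorb (a) the $\jb{n}^{2s}$ weight coming from the $H^s$-energy and (b) the weight shifts $\jb{n_\ell}^{1-2s+2\eps}$ needed to convert Fourier coefficients of $u\in H^\s$ into square-summable sequences at the leaves of the normal-form tree. A schematic power count shows the budget closes precisely when $s>\tfrac16$, matching the hypothesis; the remaining frequency sums are then performed by trilinear-at-each-node estimates in the spirit of~\cite{OST}, with the combinatorial growth of the number of normal-form trees compensated by the geometric factor $c_s(R)^j$, exactly as in the infinite iteration scheme of~\cite{GKO,OST}.

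This is the main obstacle. Unlike the $L^2$-setting of~\cite{OST}, the negative regularity forces one to spend the $\phi$-gain more sparingly, and one must track carefully at each of the $j-1$ internal nodes which frequency absorbs the $\jb{n_{\max}}^{-4}$ factor; the range $s>\tfrac16$ is precisely the threshold at which this bookkeeping closes, and I would not be surprised if the analysis of the correction terms is in fact less stringent than the analysis of $\NN^{(j)}_{1,N}$ and $\RR^{(j)}_N$ in Proposition~\ref{PROP:energy} (which needs $s>\tfrac{3}{10}$). Once step~(ii) is settled, step~(iii) is routine: for each fixed $j$, $\mathbf{1}_{\G_N(n)}\to\mathbf{1}_{\G(n)}$ and $\ft{(\P_{\le N}u)}(n)\to\ft u(n)$ pointwise on $\Z^{2j-1}$, so dominated convergence on the index set (with the bound from step~(ii) as dominating envelope) gives $\NN^{(j)}_{0,N}(\P_{\le N}u)\to \NN^{(j)}_0(u)$. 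A second dominated convergence in $j$, using the summability $\sum_{j\ge 2}c_s(R)^j<\infty$, then lets me interchange $\lim_{N\to\infty}$ with $\sum_{j\ge 2}$, yielding $\Sf_N(u)\to\Sf_\infty(u)$ and completing the proof.
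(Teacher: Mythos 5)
Your proposal is correct and takes essentially the same route as the paper: the paper proves Lemma~\ref{LEM:E2} by invoking the bound~\eqref{E9} of Proposition~\ref{PROP:energy2} (i.e.\ Lemma~\ref{LEM:N^J+1_0}) together with the super-exponential decay of the constants $C_0(j)$ to sum the series, and defers the $N\to\infty$ convergence to the dominated-convergence argument in \cite[Subsection~4.7]{OST}, exactly as you outline. Two small calibrations worth noting: (a) your intuition that the correction terms are less demanding than $\NN^{(j)}_{1,N}$ and $\RR^{(j)}_N$ is confirmed — Lemma~\ref{LEM:N^J+1_0} closes under $s>\tfrac16$, whereas Lemma~\ref{LEM:N^J+1_1} needs $s>\tfrac3{10}$; and (b) the combinatorial bookkeeping is not a geometric-versus-geometric competition: the number of ordered bi-trees $c_j=2^{j-1}j!$ grows factorially and is beaten by the factorial-type product $\prod_{k=2}^{j}(2k+2)^{3/2}$ from the nested phase gains (which is why the paper's $C_0(j)$ decays faster than any exponential, and why the absorption into a geometric $c_s(R)^j$ tail is available but is the conclusion rather than the mechanism).
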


In Section \ref{SEC:NF}, we present the proofs of Proposition \ref{PROP:energy}
and Lemma \ref{LEM:E2}.
The main tool is 
 an infinite iteration of normal form reductions
from \cite{OST}, where such an argument was implemented in $L^2(\T)$.
For our problem, however, we need to prove boundedness
of each multilinear term by a product of the $H^\s$-norm
of $u$ with $\s = s - \frac 12 - \eps < 0$.
For this purpose, we adapt the argument from~\cite{OW1}, 
where an infinite iteration of normal form reductions
was implemented in negative Sobolev spaces.
Indeed, the only essential difference between  our argument and that in~\cite{OW1}
is the presence of the weight $\jb{n}^{2s}$, 
coming from the $H^s$-norm squared on the left-hand side of~\eqref{E1}.

\subsection{Weighted Gaussian measures} 

As in \cite{OTz, OST}, 
we prove  quasi-invariance
of the Gaussian measure $\mu_s$
indirectly by first establishing quasi-invariance
of weighted Gaussian measures
associated with the modified energies
 $\EE(u)$ and $\EE_N(u)$ in \eqref{E2}. 
In  \cite{OTz, OST},  
the weighted Gaussian measures were normalized
to be probability measures thanks to the conserved $L^2$-cutoff. 
 Due to the unavailability of a cutoff based
 on a conservation law in negative regularity, 
 we 
 do not normalize our weighted Gaussian measures 
 (which is precisely the setting for the approach in~\cite{PTV}).
 See Subsection \ref{SUBSEC:3.6}.
 
We define the following measures:
\begin{align}
d\rho_s(u)=F_s(u)d\mu_s (u)
\qquad \text{and} 
\qquad d\rho_{s,N}(u)=F_{s,N}(u)d\mu_s(u),
\label{M1}
\end{align}

\noi
where $F_s(u)$ and $F_{s, N}(u)$ are given by 
\begin{align}
F_s(u) 
& :=\exp\bigg(-\EE(u)+\frac{1}{2}\|u \|_{H^s}^2 \bigg)
= \exp\bigg(\sum_{j=2}^{\infty} \NN^{(j)}_0(u) \bigg), 
\label{M1a}\\  
F_{s,N}(u) 
& :=\exp\bigg(-\EE_N(\P_{\le N} u)+\frac{1}{2}\|\P_{\le N} u \|_{H^s}^2 \bigg)
= \exp\bigg(\sum_{j=2}^{\infty} \NN^{(j)}_{0, N}(\P_{\le N} u) \bigg).
\label{M1b}
\end{align}

\noi
We also write $\rho_{s, \infty} = \rho_s$ and  $F_{s, \infty} (u)= F_s(u)$.

Note that the quasi-invariance property is a local property
in the sense we only need to work on compact sets in $H^\s(\T)$.
Thus, 
in proving quasi-invariance of $\rho_s$ and $\rho_{s, N}$, 
we only  
 require $F_{s,N}\in L^1_{\text{loc}}(\mu_s ) $, uniformly in $N \in \N \cup\{\infty\}$,  
 (i.e.~$F_{s,N}$ is locally integrable with a uniform (in $N$) bound on each compact set) 
 and $F_{s,N}\to F_{s} $ in $L^1_{\text{loc}}(\mu_s) $.   
  
\begin{proposition}
\label{PROP:ma} 	
Let $\frac{1}{6}<s\leq \frac{1}{2}$ and 
 $\s = s - \frac 12 - \eps$ for some small $\eps > 0$.
Given any $R>0$, 
  there exists $C=C(s,R) >0$ such that 
\begin{align}
 \rho_{s,N}(B_R) =\int_{B_R} F_{s,N}(u)d\mu_s(u)
\leq C(s,R)
\label{M3}
\end{align} 

\noi
for any $N\in \N\cup\{ \infty \} $.
Namely, 
 $F_{s,N}\in L^1_{\textup{loc}}(\mu_s) $, uniformly in $N\in \N \cup \{\infty \}$.
Moreover, we have
\begin{align}
\lim_{N\to \infty }\int_{B_R} | F_{s,N}(u)-F_s(u) |d\mu_s(u)=0.
\label{M4}
\end{align}

\end{proposition}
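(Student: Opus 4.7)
The plan is to deduce both statements directly from Lemma \ref{LEM:E2}, which already contains the essential analytic content. Observe that, by \eqref{M1b} and \eqref{E3a}, we have the identity
\[
F_{s,N}(u) = \exp\bigl(\Sf_N(u)\bigr)
\]
for every $N \in \N \cup \{\infty\}$ and every $u$ in the support of $\mu_s$. Thus both the pointwise bound and the pointwise convergence of $F_{s,N}$ follow from the corresponding statements for $\Sf_N$, which are provided by Lemma~\ref{LEM:E2}; the only task is to upgrade these pointwise statements to $L^1(\mu_s)$-statements on the ball $B_R \subset H^\s(\T)$.

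For the uniform bound \eqref{M3}, I would apply Lemma~\ref{LEM:E2} to obtain $|\Sf_N(u)| \le C_s(R)$ uniformly in $N \in \N \cup \{\infty\}$ and uniformly for $u \in B_R$. Exponentiating and integrating against $\mu_s$ (which is a probability measure) yields
\[
\rho_{s,N}(B_R) \;=\; \int_{B_R} e^{\Sf_N(u)} \, d\mu_s(u) \;\le\; e^{C_s(R)} \mu_s(B_R) \;\le\; e^{C_s(R)},
\]
which is precisely~\eqref{M3} with $C(s,R) = e^{C_s(R)}$. Note that $\frac{1}{6} < s \le \frac{1}{2}$ is exactly the range in which Lemma~\ref{LEM:E2} applies, so the hypothesis on $s$ is used only through that lemma.

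For the convergence \eqref{M4}, Lemma~\ref{LEM:E2} also asserts that $\Sf_N(u) \to \Sf_\infty(u)$ as $N \to \infty$ for each $u \in B_R$; by continuity of the exponential, this gives $F_{s,N}(u) \to F_s(u)$ pointwise on $B_R$. Moreover, the uniform bound from the previous paragraph yields the pointwise majorant
\[
|F_{s,N}(u) - F_s(u)| \;\le\; 2 e^{C_s(R)} \quad \text{for all } u \in B_R, \ N \in \N \cup \{\infty\},
\]
and the constant function $2 e^{C_s(R)} \mathbf{1}_{B_R}$ is integrable with respect to the probability measure $\mu_s$. Applying the dominated convergence theorem then gives \eqref{M4}.

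Given that Lemma~\ref{LEM:E2} is assumed, there is essentially no obstacle in this proposition: both conclusions are one-line consequences of the lemma combined with the fact that $\mu_s$ is a probability measure. The real work sits in Lemma~\ref{LEM:E2}, where the uniform bound on $\Sf_N$ on bounded sets of $H^\s(\T)$ (with $\s < 0$) is established via the infinite iteration of normal form reductions carried out in Section~\ref{SEC:NF}; once that is in hand, the present proposition is just the packaging step needed before applying the Planchon--Tzvetkov--Visciglia framework in Subsection~\ref{SUBSEC:3.6}.
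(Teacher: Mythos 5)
Your proof is correct and takes essentially the same route as the paper: both deduce the uniform bound \eqref{M3} directly from the pointwise bound \eqref{E4} in Lemma~\ref{LEM:E2} (using that $\mu_s$ is a probability measure), and both obtain \eqref{M4} from the pointwise convergence $\Sf_N \to \Sf_\infty$ asserted in that lemma together with a bounded/dominated convergence argument. The only cosmetic difference is that you spell out the identity $F_{s,N} = \exp(\Sf_N)$ and the explicit constant $e^{C_s(R)}$, which the paper leaves implicit.
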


\begin{proof}
The bound \eqref{M3} follows from 
\eqref{E4} in  Lemma \ref{LEM:E2}.
Furthermore, 
it follows from Lemma~\ref{LEM:E2}
that 
$\Sf_N(u)$  converges to  $\Sf_\infty(u)$ 
as $N \to \infty$
for each $u \in B_R$. 
Then, from \eqref{E3a}, \eqref{M1a}, and \eqref{M1b}, 
we see that $F_{s, N}$ converges to $F_s$
almost surely with respect to $\mu_s$.
Together with the bound~\eqref{E4} in  Lemma \ref{LEM:E2}, 
the bounded convergence theorem yields \eqref{M4}.
\end{proof}

\subsection{A change-of-variable formula}

Next, we go over a global aspect of the proof of Theorem~\ref{THM:1}.
From \eqref{mu2} and \eqref{mu3}, 
we can write $\rho_{s,N}$ in \eqref{M1} as 
\begin{align}
d\rho_{s,N}=Z_{s,N}^{-1}\exp\big(-\EE_{N}(\P_{\leq N}u ) \big)\, du_{N}\otimes d\mu_{s,N}^{\perp},
\label{M2}
\end{align}

\noi
where $du_N$ is  as in \eqref{mu4}
(= the Lebesgue measure on $E_N \cong \mathbb{C}^{2N+1}$)
and  $Z_{s,N}^{-1}$ is the normalizing constant for $\mu_{s,N}$.
Proceeding as in \cite{OTz} with \eqref{M2}, we have the following
change-of-variable formula.

\begin{lemma}\label{LEM:ch}
Let $\frac 16 < s \leq \frac 12$ 
and  $\s = s - \frac 12 - \eps$ for some small $\eps > 0$.
Then, we have
\begin{align*}
\rho_{s,N}(\Phi_N(t)(A) )&=\int_{\Phi_N(t)(A) } 
e^{\sum_{j=2}^\infty \NN_{0,N}^{(j)}(\P_{\leq N}u) }d\mu_s(u)\\
&=Z_{s,N}^{-1} \int_A e^{-\EE_{N}(\P_{\leq N} \Phi_N(t)   (u)  )}  du_N\otimes  d\mu_{s,N}^{\perp}
\end{align*}
for any $t\in \R$, 
 $N \in \N$, 
and any measurable set $A\subset H^\s(\T)$. 

\end{lemma}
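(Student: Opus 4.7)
The plan is to follow the standard two-step argument from \cite{OTz}. The first equality of the lemma is nothing but the definition \eqref{M1} of $\rho_{s,N}$ combined with the closed form \eqref{M1b} of $F_{s,N}$, so only the second equality requires work. For that, I would start from the explicit representation \eqref{M2} of $\rho_{s,N}$ and apply the substitution $u = \Phi_N(t)(v)$ to the right-hand side.

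The argument rests on the split structure of the truncated equation \eqref{4NLS4}: on $E_N^\perp$ the high-frequency component $\P_{>N} u$ evolves by the linear propagator $S(t) = e^{-it\dx^4}$, while on $E_N$ the low-frequency component $\P_{\le N} u$ satisfies the finite-dimensional system \eqref{4NLS5}. Accordingly, $\Phi_N(t)$ factors as a product of these two flows, and the reference measure $du_N \otimes d\mu_{s,N}^\perp$ appearing in \eqref{M2} splits in the same way. I then need two invariance facts: (i) the Lebesgue measure $du_N$ on $E_N \cong \C^{2N+1}$ is invariant under the low-frequency flow, by Liouville's theorem applied to the Galerkin-truncated renormalized 4NLS on $E_N$ (which is genuinely Hamiltonian because $\P_{\le N}$ is the $L^2$-orthogonal projection); and (ii) $\mu_{s,N}^\perp$ from \eqref{mu3} is invariant under $S(t)$, since $S(t)$ multiplies each Fourier coefficient $\ft u(n)$ by a unimodular factor while the density of $\mu_{s,N}^\perp$ depends only on $|\ft u(n)|$. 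Combining (i) and (ii) justifies the substitution $v = \Phi_N(-t)(u)$ in \eqref{M2} and yields the claimed identity.

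I do not expect a substantial obstacle here, since nothing in the argument is sensitive to the regularity: both invariance statements concern only the measures and the linear/Galerkin dynamics, and neither notices whether $s$ is positive or negative. The only novelty compared with \cite{OTz, OST} is the absence of an $L^2$-cutoff in the weighted Gaussian $\rho_{s,N}$, but since no such cutoff enters the formal computation \eqref{M2}, the derivation from \cite{OTz} carries over verbatim.
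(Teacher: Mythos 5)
Your proof is correct and follows exactly the route the paper intends: the paper itself gives no details for Lemma \ref{LEM:ch}, simply writing ``Proceeding as in \cite{OTz} with \eqref{M2},'' and the argument in \cite{OTz} is precisely the factorization of $\Phi_N(t)$ into the finite-dimensional Hamiltonian flow on $E_N$ (Lebesgue measure $du_N$ invariant by Liouville) times the linear flow $S(t)$ on $E_N^\perp$ (which preserves $\mu_{s,N}^\perp$ since it acts by unimodular multipliers on the Fourier coefficients). Your observation that the absence of an $L^2$-cutoff does not affect this formal computation is also accurate, as is your point that the regularity hypothesis enters only to make the series defining $\EE_N$ (and hence both sides of the identity) well-defined, not to drive the change of variables itself.
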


\subsection{Proof of Theorem \ref{THM:1}}
\label{SUBSEC:3.6}

We are now ready to present the proof of Theorem \ref{THM:1}.
We follow the argument in \cite{PTV} but due to the weaker approximation
property of the truncated dynamics in negative Sobolev spaces, 
 more care is needed to close the argument.
Fix $\frac 3{10} < s \leq \frac 12$ 
and  set $\s = s - \frac 12 - \eps$ for some small $\eps > 0$.

In the following, we only consider the positive times.
Fix $t>0$. Then,  by the inner regularity of the measure $\mu_s$, it suffices to show that
\begin{align*}
A\subset H^\s(\T)\text{ compact and }\mu_s(A)=0
\qquad \LRA \qquad \mu_s(\Phi(t)(A))=0.
\end{align*} 	

\noi
Fix a compact set $A \subset H^\s(\T)$
such that $\mu_s(A) = 0$.
From Lemma \ref{LEM:E2} with Lemma \ref{LEM:growth}, we have
\begin{align}
0<\exp\bigg(\sum\limits_{j=2}^{\infty} \NN^{(j)}_0(u)   \bigg)<\infty
\label{Th1a}
\end{align}

\noi
for all $u\in A \cup\Phi(t)(A)$.
Then, it follows from  \eqref{M1} and \eqref{M1a}
with $\mu_s(A) = 0$
that $\rho_s(A) = 0$.
Our goal is to prove 
\begin{align}
 \rho_s(\Phi(t)(A))=0.
\label{Th1b}
\end{align}

\noi
Once we prove \eqref{Th1b}, 
we then conclude from \eqref{Th1a} that $\mu_s(\Phi(t)(A)) = 0$.

Since $A$ is compact, 
we have  $A\subset B_R\subset H^\s(\T)$
for some $R>0$.
By Lemma \ref{LEM:growth}, there exists 
  $C(R)>0$ such that
\begin{align} 
\Phi_{N}(\tau)(B_{2R}) \subset B_{C(R)} 
\label{Th2}
\end{align} 

\noi
for any $\tau\in [0,t]$ and $N \in \N \cup\{\infty\}$.

Fix  a measurable set $D \subset B_{2R}$.
Then, from \eqref{M2} and Lemma  \ref{LEM:ch}, we have
\begin{align}
\begin{split}
\bigg| \frac{d}{d\tau}\rho_{s,N}(\Phi_N(\tau)(D)) \bigg|
&= \bigg|     \frac{d}{d\tau} Z_{s,N}^{-1}\int_{\Phi_N(\tau)(D) }\exp\big( -\EE_{N}(\P_{\leq N}u ) \big)
\, d u_N\otimes d\mu_{s,N}^{\perp}       \bigg| \\ 
&=\bigg| Z_{s,N}^{-1}\int_{D} \frac{d}{d\tau}\exp\big(-\EE_{N}( \P_{\leq N}\Phi_N(\tau)( u) )  \big) 
\,du_N\otimes d\mu_{s,N}^{\perp}\bigg|.
\end{split}
\label{Th3}
\end{align}

\noi
From  Proposition \ref{PROP:energy} with \eqref{Th2}, we also have 
\begin{align}
\bigg| \frac{d}{d\tau}\exp\big(-\EE_{N}(\P_{\leq N} \Phi_N(\tau)(u ) ) \big) \bigg|
\leq C'(R) \exp\big(-\EE_{N}(   \P_{\leq N}\Phi_{N}(\tau)(u ) ) \big)
\label{Th4}
\end{align} 

\noi
for any $\tau\in [0,t]$ and  $u\in D$. 
Hence, from \eqref{Th3}, \eqref{Th4}, and Lemma \ref{LEM:ch}
with \eqref{M1} and \eqref{M2}, we have
\begin{align*}
\bigg|    \frac{d}{d\tau} \rho_{s,N}(\Phi_N(\tau)(D) )\bigg|
&\leq Z_{s,N}^{-1} \, C'(R)\int_D 
\exp\big(-\EE_{N}(\P_{\leq N} \Phi_N(\tau) (u )) \big) \,du_N\otimes d\mu_{s,N}^{\perp}\\   
&=Z_{s,N}^{-1}\, C'(R)\int_{\Phi_{N}(\tau)(D)} \exp\big( -\EE_{N}(\P_{\leq N}u )    \big)\,
du_N\otimes d\mu_{s,N}^{\perp}\\
&= C'(R) \int_{\Phi_{N}(\tau)(D)} F_{s,N}(u) d\mu_{s}\\
&=C'(R)\, \rho_{s,N}(\Phi_N(\tau)(D) )
\end{align*} 

\noi
for any $\tau \in [0, t]$.
Then, 
by  Gronwall's inequality, we obtain
\begin{align}
\rho_{s,N}(\Phi_{N}(\tau)(D) )
=\int_{\Phi_{N}(\tau)(D)} F_{s,N}(u) d\mu_{s}\leq \exp(C'(R) \tau )\rho_{s,N}(D)
\label{Th5}
\end{align}

\noi
for any $\tau \in [0, t]$ and  $N\in \N$.
Note that the estimate \eqref{Th5} allows us to conclude quasi-invariance
of $\rho_{s, N}$ (and $\mu_s$) under the truncated dynamics $\Phi_N(t)$.

Next, by a limiting argument, we prove quasi-invariance of $\rho_s$ under $\Phi(t)$.
From Proposition~\ref{PROP:ma}, we have
\begin{align}
\lim_{N \to \infty}\int_{B_{C(R)}}| F_{s,N}(u)-F_{s}(u)| d\mu_{s}(u)=0,
\label{Th6} 
\end{align}

\noi
where $C(R)$ is as in \eqref{Th2}.
Thus, given small $\dl > 0$, we have
\begin{equation}  
\begin{split}
\rho_{s}(\Phi(t)(A) )&=\int_{\Phi(t)(A)}F_s(u)d\mu_s\\
&= \intt_{\Phi(t)(A) \cap\Phi_N(t)(A+B_\dl) }F_s(u)d\mu_s
+\int\limits_{\Phi(t)(A)
 \setminus \Phi_N(t)(A+B_\dl) }F_s(u)d\mu_s\\
& \leq \intt_{\Phi_N(t)(A+B_\dl) }F_s(u)d\mu_s 
+  \int\limits_{\Phi(t)(A)\setminus \Phi_N(t)(A+B_\dl) }F_s(u)d\mu_s\\
& \leq \intt_{\Phi_N(t)(A+B_\dl) }F_{s,N}(u)d\mu_s+
 \intt_{\Phi(t)(A)\setminus \Phi_N(t)(A+B_\dl) }F_s(u)d\mu_s + \dl
\end{split}
\label{Th7}
\end{equation}

\noi
for any sufficiently large $N \gg1 $.
Then, by applying \eqref{Th5} 
(with $D=A+B_\dl$ for $\dl<R$)
 to~\eqref{Th7}
 and then applying \eqref{Th6} again, we have
\begin{align}
\begin{split}
\rho_{s}(\Phi(t) (A) )
& \leq \exp(C'(R)t )\int\limits_{A+B_\dl}F_{s,N}(u)d\mu_s
+ \intt_{\Phi(t)(A)\setminus \Phi_N(t)(A+B_\dl) }F_s(u)d\mu_s
+ \dl \\
& \leq \exp(C'(R)t )\int\limits_{A+B_\dl}F_{s}(u)d\mu_s
+ \intt_{\Phi(t)(A)\setminus \Phi_N(t)(A+B_\dl) }F_s(u)d\mu_s
+ 2\dl .
\end{split}
\label{Th8}
\end{align}

By Proposition \ref{PROP:ma}
and the Lebesgue dominated convergence theorem, we have 
\begin{align}
\label{Th9}
\lim_{\dl \to 0}\int_{A+B_\dl}F_{s}(u)d\mu_s=\int_A F_s(u)d\mu_s
=\rho_{s}(A)=0.
\end{align}

\noi
Next, we consider the second term on the right-hand side of \eqref{Th8}.
Let $A_N:= \Phi(t)(A)\setminus \Phi_N(t)(A+B_\dl) $. 
Then, it follows from Proposition \ref{PROP:app2} that 
\begin{align}
\limsup A_N=\bigcap_{k = 1}^\infty \bigcup_{N = k}^\infty A_N =  \varnothing. 
\label{Th10}
\end{align}

\noi
Indeed, if \eqref{Th10} did not hold, then
there would be  at least one element $u \in \limsup A_N$, 
namely,  $u \in A_N$ for infinitely many  $N$. 
This is clearly a contradiction to 
 Proposition \ref{PROP:app2}
 since,  given any such $u $ (which in particular belongs to $\Phi(t)(A)$),  we have 
\[ u  \in \Phi_N(t)(A+B_\dl) \subset A_N^c\]

\noi
for all $N \ge N_0(t,R, u, \dl)$.
This implies that $\lim_{N \to \infty}\ind_{A_N}(u) = 0$
 for any $u \in \Phi(t) (A)$
 (and thus for any $u \in H^\s(\T)$).
Hence, by  Lemma \ref{LEM:E2}
and 
the Lebesgue dominated convergence theorem,  we have 
\begin{align}
\label{Th11}
\lim\limits_{N\to \infty } 
\int_{A_N}
F_s(u)d\mu_s=0. 
\end{align}    

\noi
Finally, putting 
\eqref{Th8}, \eqref{Th9}, and \eqref{Th11} together
and taking $\dl \to 0$, 
we conclude \eqref{Th1b}.
This completes the proof of Theorem \ref{THM:1}.

\section{On the approximation property of the truncated dynamics}\label{SEC:app}
In this section, 
we study the approximation property of the truncated dynamics \eqref{4NLS4}
and present the proof of Proposition \ref{PROP:app2}.

\subsection{Gauged 4NLS}
We first go over the basic reduction of the problem.  
Fix $\s > -\frac 13$.
Let $u \in C(\R; H^\s(\T))$ be the global solution to 
the renormalized 4NLS \eqref{4NLS2} with $u|_{t = 0} = u_0$.
The main obstruction in carrying out analysis in negative Sobolev spaces is the resonant 
part of the nonlinearity.
In order to weaken the resonant interaction, 
we introduce the following gauge transform
$\JJ = \JJ_{u_0}$
as in \cite{OTzW,LSZ}:
\begin{align}
\JJ(u)(x,t)=
\JJ_{u_0}(u)(x,t) = \sum_{n\in \Z}e^{inx-it |\ft u_0 (n)|^2}\ft u (n, t).
\label{X1}
\end{align}

\noi
This gauge transform is clearly invertible and leaves the $H^s$-norm invariant. 
 A direct computation shows that the gauged function
 $v = \JJ(u)$ 
satisfies the following gauged 4NLS:
\begin{align}
\begin{cases}
i\dt v = \dx^4v + \NN_1(v)+\NN_2(v),\\
v|_{t= 0}=u_0.
\end{cases}
\label{4NLS6}
\end{align}

\noi
Here, the first nonlinearity $\NN_1(v)$ is defined by
\begin{align}
\NN_1(v)(x,t):=\sum_{n\in \Z} e^{inx}
\sum_{\G(n)}e^{it \Ta (\bar n )}
\ft v (n_1,t)\cj {\ft v(n_2,t)}\ft v (n_3,t),
\label{X1a}
\end{align}

\noi
where $\G(n)$ is as in \eqref{G1} 
and the phase function $\Ta(\bar n ) = \Ta_{u_0}(\bar n ) $ is given by 
\begin{align}
\Ta (\bar n) :=\Ta(n_1,n_2,n_3,n)
=|\ft u _0(n_1) |^2  - |\ft u _0(n_2) |^2  + |\ft u _0(n_3) |^2  - |\ft u _0(n) |^2. 
\label{X2}
\end{align}

\noi
The second nonlinearity $\NN_2(v) $ is defined by 
\begin{align}
\NN_2(v)(x,t):=-\sum_{n\in \Z}e^{inx}
\Big( |\ft v (n,t) |^2 - |\ft u_0(n)|^2\Big) \ft v (n,t).
\label{X3}
\end{align}

\noi
In the following, we often view
$\NN_1$ as a trilinear operator and, 
with a slight abuse of notations, we write $\NN_1(v_1, v_2, v_3)$
to denote the right-hand side of \eqref{X1a}, 
where we replace the $j$th occurrence of $v$ by $v_j$, $j = 1, 2, 3$.
Given a trilinear operator $\M(v_1, v_2, v_3)$, 
we write $\M(v)$ to mean $\M(v, v, v)$.
We apply this  convention in the following.

Next, we apply the gauge transform $\JJ$ in \eqref{X1} to 
 the truncated dynamics \eqref{4NLS4}.
Let $u_N \in C(\R; H^\s(\T))$ be the global solution to 
the truncated equation \eqref{4NLS4} with the same initial data $u_N|_{t = 0} = u_0$.
Then, the gauged function $v_N = \JJ(u_N)$
satisfies the following gauged truncated 4NLS:
\begin{align}
\begin{cases}
i\dt  v_N = \dx^4 v_N + \NN_1^N(v_N)+\NN_2^N(v_N)\\
v_N|_{t= 0}=u_0, 
\end{cases}
\label{4NLS7}
\end{align}

\noi
where 
 $\NN_1^N(v_N)$ and  $\NN_2^N(v_N)$ are given by 
\begin{align}
\begin{split}
\NN_1^N(v_N)(x, t): 
\! & = \P_{\le N} \NN_1(\P_{\le N}v)(x, t)\\
& = \sum_{| n|\leq N} e^{inx}\sum_{\G_N(n)  }
e^{it\Ta(\overline{n})}
\ft v_N (n_1,t)\cj {\ft v_N(n_2,t)}\ft v_N (n_3,t), \\
\NN_2^N(v_N)(x,t):
\! & =-\sum_{|n| \leq N}e^{inx}\Big( |\ft v_N(n, t)|^2  - |\ft u_0(n)|^2\Big) 
\ft v_N(n, t)\\
&\hphantom{X|}
+\sum_{|n| >N} e^{inx} |\ft u_0(n)|^2 \ft v_N (n, t).
\end{split}
\label{X4}
\end{align}

\noi
Note that the high frequency part of the solution to the gauged truncated 4NLS \eqref{4NLS7}
is given by 
\begin{align*}
\P_{>N} v_N(t)=S_{u_0}(t)\P_{>N}u_0, 
\end{align*}

\noi
where $S_{u_0}(t)$ is the modified linear propagator defined by 
\begin{align}
S_{u_0}(t) f 
: = \sum_{n \in \Z}
e^{-it(n^4+|\ft u_0(n)|^2 )}\ft f(n) e^{inx} .
\label{X6}
\end{align}

\noi
When $N=\infty$, 
the equation \eqref{4NLS7} formally reduces to \eqref{4NLS6}
and thus  
we use the notations $v_\infty$, $\NN_1^\infty(v) $,  and $\NN_2^\infty(v)$ 
for  $v$, $\NN_1(v)$, and $\NN_2(v)$
in the following.

\subsection{Function spaces and nonlinear estimates}

We recall the definition of the basic function spaces and the key  estimates
in proving local well-posedness of the renormalized 4NLS~\eqref{4NLS2}
in negative Sobolev spaces.

We first recall 
the Fourier restriction norm method introduced by Bourgain \cite{BO1}.
Given $s,b\in \R$, we define the $X^{s,b}$-space as the completion of $\mathcal{S}(\T\times \R)$ under the following norm:
\begin{align*}
\|u \|_{X^{s,b}(\T\times \R )}=\| \jb{n}^s \jb{\tau+n^4 }^b \ft u (n,\tau) \|_{\l_n^2L^2_\tau}. 
\end{align*} 

\noi
Given a time interval $I\subset \R$, 
we define the local-in-time version $X^{s,b}(I)$  by setting
\begin{align*}
\| u\|_{X^{s,b}(I)}=\inf\big\{ \| \wt u  \|_{X^{s,b}}: \wt u |_I=u  \big\}.
\end{align*}

\noi
When $I=[0,T]$, we also set $X^{s, b}_T = X^{s, b}(I)$.
We use the same notation for the time restriction of other function spaces.
Recall that 
\begin{align}
\| u \|_{C_TH^s_x} \les \| u \|_{X^{s, b}_T}
\label{X7a}
\end{align}

\noi
for $b > \frac 12$.
Using the $X^{s, b}$-space, local well-posedness in $L^2(\T)$
of 4NLS \eqref{4NLS} (and the renormalized 4NLS \eqref{4NLS2})
follows from the $L^4$-Strichartz estimate and a contraction argument.
See \cite{OTz}.

Due to the lack of local uniform continuity of the solution map,
one can not use a contraction argument to prove local well-posedness of 
the renormalized 4NLS \eqref{4NLS2} in negative Sobolev spaces. 
In \cite{OW1}, the short-time Fourier restriction norm method and the normal form approach
were used to overcome this issue.
Following the previous works \cite{TT, NTT, MT}
on the modified KdV equation and the third order NLS, 
Kwak  \cite{Kwak}
used the  modified $X^{s, b}$-space, 
defined by the norm:
\begin{align}
\|u \|_{Y_{u_0}^{s,b}(\T\times \R )}=\| \jb{n}^s \jb{\tau+n^4 -  |\ft u_0(n)|^2}^b \ft u (n,\tau) \|_{\l_n^2L^2_\tau}
\label{X8}
\end{align} 

\noi
for $u|_{t = 0} = u_0$
and proved local well-posedness of \eqref{4NLS2} by a compactness argument.
In \cite{LSZ}, Li, Zine, and the second author 
proved local well-posedness of the  fractional NLS \eqref{4NLS3} (for $\al > 2$) below $L^2(\T)$
by studying the gauged formulation (as in \eqref{4NLS6}).
We point out that 
\begin{align}
\| \JJ_{u_0}(u) \|_{X^{s, b}} = \| u \|_{Y_{u_0}^{s, b}}
\label{YY1}
\end{align}

\noi
and thus studying the renormalized 4NLS \eqref{4NLS2} in the $Y^{s, b}_{u_0}$-spaces
is equivalent to studying the gauged renormalized 4NLS \eqref{4NLS6}
in the standard $X^{s, b}$-spaces.

Let $\Psi(t)$ and $\Psi_N(t)$ be the solution maps to \eqref{4NLS6} and \eqref{4NLS7}, respectively, 
with the understanding that $\Psi_\infty(t) = \Psi(t)$.
Then, 
as a consequence of the aforementioned well-posedness results, 
 we have the following uniform growth bound
 for $\s > -\frac 13$;
given any $R > 0$ and $T > 0$, there exists $C_0(T, R)>0$ such that
\begin{align}
\sup_{N\in \N\cup \{ \infty \} }\sup_{u_0 \in B_R } 
\|\Psi_N(t)(u_0) \|_{X^{\s,\frac{1}{2}+\eps}_T}\leq C_0(T, R)
\label{growth1}  
\end{align}

\noi
for some small $\eps > 0$, 
where 
$B_R \subset H^\s(\T)$
denotes  the ball of radius $R$
centered at the origin.
For small $T = T(R) > 0$, 
the bound \eqref{growth1} follows from 
the uniform (in $N$) local well-posedness result in 
\cite{Kwak, LSZ}.\footnote{In \cite{Kwak, LSZ}, 
only the untruncated equation \eqref{4NLS2} was considered.
In view of the uniform (in $N$) boundedness
of $\P_{\leq N}$ on the relevant function spaces, 
the local well-posedness argument in \cite{Kwak, LSZ} also applies
to the truncated equation \eqref{4NLS4}, uniformly in $N \in \N$.}
For large $T > 0$, the bound \eqref{growth1}
follows from the same bound over short time intervals
together with 
 the global-in-time control and the strong uniqueness statement in \cite{OW1}
 (which guarantees that the solutions constructed in \cite{OW1, Kwak, LSZ}
 all agree)
 and 
the subadditivity of the local-in-time $X^{\s, \frac 12 + \eps}$-norms
over disjoint time intervals
as in Lemma A.4 in \cite{BOP2}.

Next, we recall the linear estimates.  See \cite{BO1, GTV}.

\begin{lemma}\label{LEM:lin}
Let $s \in \R$ and $0 < T \leq 1$.

\smallskip

\noi
\textup{(i)} For any $b \in \R$, we have
\begin{align*}
\|  S(t) u_0 \|_{X^{s, b}_T}
\leq C_b \|u_0\|_{H^{s}}.
\end{align*}

\smallskip

\noi
\textup{(ii)}
Let $ - \frac 12 < b' \leq 0 \leq b \leq b'+1$.
Then,  we have 
\begin{align*}
\bigg\|  \int_0^t S(t-t')F(t') dt'\bigg\|_{X^{s, b}_T}
\leq C_{b, b'} T^{1-b+b'} \|F\|_{X^{s, b'}_T}.
\end{align*}

\end{lemma}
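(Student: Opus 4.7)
The plan is to establish both bounds via the standard Fourier-analytic framework for $X^{s,b}$-estimates, tracking the $T$-dependence in (ii) through a rescaled time cutoff. Fix $\eta \in C^\infty_c(\R)$ with $\eta \equiv 1$ on $[-1,1]$ and $\supp \eta \subset [-2,2]$, and set $\eta_T(t) := \eta(t/T)$, so that $\eta_T \equiv 1$ on $[-T,T] \supset [0,T]$. Since the $X^{s,b}_T$-norm is defined by infimum over extensions, it suffices in each case to bound the $X^{s,b}$-norm of the global function obtained by multiplying by $\eta$ or $\eta_T$.

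For (i), use the cutoff $\eta$ (no rescaling is needed). Since $S(t)u_0$ has spatial Fourier coefficients $e^{-itn^4}\widehat{u_0}(n)$, one has
\[
\mathcal{F}_{t,x}\big[\eta(t) S(t) u_0\big](n,\tau) = \widehat{\eta}(\tau + n^4)\, \widehat{u_0}(n),
\]
and after the change of variables $\tau \mapsto \tau - n^4$ the $X^{s,b}$-norm factorizes as
\[
\|\eta(t) S(t) u_0\|_{X^{s,b}} = \|u_0\|_{H^s}\cdot \big\|\jb{\tau}^b \widehat{\eta}(\tau)\big\|_{L^2_\tau}.
\]
The second factor is finite (and independent of $T$) since $\eta$ is Schwartz, yielding (i).

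For (ii), let $\widetilde F$ be an extension of $F$ with $\|\widetilde F\|_{X^{s,b'}} \le 2\|F\|_{X^{s,b'}_T}$, and decompose $S(-t')\widetilde F(t')$ on the time-Fourier side as $\int e^{it'\mu} G_\mu \, d\mu$, where $\widehat{G_\mu}(n) = \widehat{\widetilde F}(n, \mu - n^4)$. The Duhamel integral becomes
\[
\int_0^t S(t-t')\widetilde F(t')\, dt' = S(t) \int \frac{e^{it\mu} - 1}{i\mu}\, G_\mu \, d\mu,
\]
and after multiplication by $\eta_T(t)$ it splits into a linear piece $\eta_T(t) S(t) \int \tfrac{e^{it\mu}}{i\mu} G_\mu d\mu$ and a time-independent piece $-\eta_T(t) S(t) \int \tfrac{G_\mu}{i\mu} d\mu$. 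The $X^{s,b}$-norm of each is bounded by the proof of (i) with $\eta_T$ in place of $\eta$: the first is controlled by $\|\jb{\tau}^b \widehat{\eta_T}(\tau-\mu) \|_{L^2_\tau}$ integrated against $G_\mu/\mu$, while the second reduces to $\|\jb{\tau}^b \widehat{\eta_T}(\tau)\|_{L^2_\tau}$ times $\big\|\int G_\mu/\mu\, d\mu\big\|_{H^s}$.

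The main obstacle, and the source of both the constraint $b' > -\tfrac12$ and the gain $T^{1-b+b'}$, is the $1/\mu$ singularity at $\mu = 0$. It is handled by splitting the $\mu$-integral at the threshold $|\mu| \sim 1/T$: in the low-modulation regime $|\mu| \lesssim 1/T$ one uses the Taylor bound $|e^{it\mu} - 1| \lesssim T|\mu|$ on $\supp \eta_T$ to absorb the singularity into the phase and produce the correct $T$-power, while in the high-modulation regime one keeps $1/\mu$ and uses $b' > -\tfrac12$ to ensure that $\int_{|\mu|\gtrsim 1/T} |\mu|^{-2}\jb{\mu}^{-2b'} d\mu$ is finite, applying Cauchy--Schwarz in $\mu$. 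The $T$-factors arising from the scaling $\widehat{\eta_T}(\tau) = T\widehat\eta(T\tau)$ and from the thresholds combine, after balancing the two regimes against one another, to exactly $T^{1-b+b'}$; the upper bound $b \le b'+1$ appears precisely at the point where the one-derivative gain of the Duhamel operator is matched by the regularity provided by the time cutoff, so beyond this point no further gain is possible.
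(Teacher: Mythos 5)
The paper does not prove this lemma; it is recalled from the cited references [BO1, GTV], and your sketch follows that standard route. Part (i) via $\mathcal{F}_{t,x}[\eta S(t)u_0](n,\tau) = \widehat\eta(\tau+n^4)\widehat{u_0}(n)$ and the change of variables is exactly right, and for part (ii) the decomposition of the Duhamel integral into an oscillatory piece and a constant-in-time piece, the split at $|\mu|\sim 1/T$, and the role of $b'>-\tfrac12$ in the high-modulation integral are all correctly identified.

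One step is imprecise as written. In the low-modulation regime you invoke the pointwise bound $|e^{it\mu}-1|\lesssim T|\mu|$ to ``absorb the singularity,'' but a pointwise estimate on the $t$-dependent integrand does not by itself control the $X^{s,b}$-norm, which requires the time Fourier transform of the resulting function of $t$, not a pointwise majorant. The standard fix, as in [GTV, Lemma 2.1] or [BO1], is to expand $e^{it\mu}-1 = \sum_{k\geq 1}(it\mu)^k/k!$ and treat each summand, which has the form $\eta_T(t)\, t^k\, S(t)\, h_k(x)$ for some $h_k$ depending only on $x$, by the homogeneous estimate (i) applied with the modified cutoff $\eta_T(t)t^k$; one then sums the rapidly convergent series. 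Tracking $\big\|\jb{\tau}^b \mathcal{F}_t[\eta_T(\cdot)(\cdot)^k](\tau)\big\|_{L^2_\tau}$ together with the restriction $|\mu|\lesssim 1/T$ is precisely where the $T^{1-b+b'}$ gain and the threshold $b\le b'+1$ emerge. With that repair, your argument is the standard proof of the lemma.
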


We now state the nonlinear estimates, which essentially follow from \cite{Kwak, LSZ}.  
In the remaining part of this section,  we fix small $\eps > 0$.

\begin{lemma}
\label{LEM:T1}
Let $-\frac{1}{2}<\s<0$ and $T>0$. Then,  
 we have
\begin{align}
\| \NN_1^N(v_1,v_2,v_3) \|_{X^{\s,-\frac{1}{2}+2\eps  }_T} 
& \les \prod_{j = 1}^3  \| v_j \|_{X^{\s,\frac{1}{2}+ \eps }_T}, 
\label{YY2a}
\end{align}

\noi
uniformly in  $N\in \N\cup \{\infty \}$.
\end{lemma}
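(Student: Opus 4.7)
The plan is to reduce the bound to the non-truncated case $N=\infty$ and then carry out a standard multilinear estimate on the gauged nonlinearity using the resonance relation \eqref{phi2}. For the first reduction, I would write $\NN_1^N(v_1,v_2,v_3) = \P_{\leq N}\NN_1(\P_{\leq N} v_1, \P_{\leq N} v_2, \P_{\leq N} v_3)$, noting that the constraint $n_1,n_3\neq n$ defining $\Gamma_N(n)\subset\Gamma(n)$ is preserved by the outer and inner projections. Since the sharp-cutoff multipliers $\P_{\leq N}$ are bounded on $X^{\s,b}_T$ uniformly in $N\in\N$, it suffices to prove the estimate for $\NN_1$ itself.

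For the second step, my preferred route is to leverage the isometry \eqref{YY1}. Setting $u_j = \JJ_{u_0}^{-1}(v_j)$, one checks that $\JJ_{u_0}\bigl(\NN(u_1,u_2,u_3)\bigr)$ coincides, on the non-resonant index set $\Gamma(n)$, with $\NN_1(v_1,v_2,v_3)$; here $\NN$ is the ungauged renormalized cubic nonlinearity in \eqref{non1} and the diagonal pieces $n_1=n$ or $n_3=n$ are absorbed into $\NN_2$ and $\NN_2^N$. Combined with \eqref{YY1}, the desired bound \eqref{YY2a} is therefore equivalent to
\begin{equation*}
\bigl\|\NN(u_1,u_2,u_3)\bigr\|_{Y^{\s,-\frac12+2\eps}_{u_0,T}} \lesssim \prod_{j=1}^3 \|u_j\|_{Y^{\s,\frac12+\eps}_{u_0,T}},
\end{equation*}
which is exactly the trilinear estimate underlying the local well-posedness of \eqref{4NLS2} in $H^\s(\T)$ for $\s>-\frac13$ proved by Kwak \cite{Kwak} and extended to the fractional setting by \cite{LSZ}. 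Passing back to $X^{\s,b}_T$ via \eqref{YY1} yields \eqref{YY2a}.

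For a self-contained proof, I would instead argue by duality directly in the $X^{\s,b}$-formulation. Testing against $w\in X^{-\s,\frac12-2\eps}_T$ and taking space-time Fourier transforms, the trilinear form becomes a weighted sum over $n_1-n_2+n_3=n$ (with $n_1,n_3\neq n$) and an integral in the dual times $\tau_j$ subject to $\tau=\tau_1-\tau_2+\tau_3+\Theta(\bar n)$. Writing $\s_j = \tau_j+n_j^4$ and $\s_0=\tau+n^4$, the phase computation gives
\begin{equation*}
\s_0 = \s_1 - \s_2 + \s_3 - \phi(\bar n) + \Theta(\bar n),
\end{equation*}
so $\MAX(|\s_0|,|\s_1|,|\s_2|,|\s_3|)\gtrsim |\phi(\bar n)-\Theta(\bar n)|$. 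The factorization \eqref{phi2} shows $|\phi(\bar n)|\gtrsim |n_1-n_2||n_1-n|\jb{n_{\max}}^2$ on $\Gamma(n)$, which is the source of the smoothing gain. Placing the input with the largest modulation in $L^2_{t,x}$ and the remaining two factors of $v_j$ (and $w$) in $L^4_{t,x}$ via the $L^4$-Strichartz estimate for $S(t)$, and dividing into cases according to which modulation is dominant, one obtains the claim after dyadic summation.

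The main obstacle is the presence of the gauge shift $\Theta(\bar n)$: since we are in negative Sobolev regularity, $|\hat u_0(n)|^2$ need not be small and $|\Theta(\bar n)|$ can be comparable to $|\phi(\bar n)|$ at low output frequencies, preventing an immediate resonance gain. This is precisely where the $Y^{\s,b}_{u_0}$-formulation is cleanest — the shift is absorbed into the modulation weight $\jb{\tau+n^4-|\hat u_0(n)|^2}$, so that the resonance relation becomes $\s_0^{u_0}=\s_1^{u_0}-\s_2^{u_0}+\s_3^{u_0}-\phi(\bar n)$ with no extraneous $\Theta$-term. This is why the second step above, reducing to the bound from \cite{Kwak,LSZ}, is the more efficient path; the $T^{\eps}$-gain needed to pass from the $-\frac12+\eps$-exponent appearing there to the $-\frac12+2\eps$-exponent here follows from Lemma \ref{LEM:lin}(ii).
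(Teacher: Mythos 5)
Your proposal is correct and follows essentially the same route as the paper: reduce to the untruncated case via uniform boundedness of $\P_{\leq N}$, then transfer the estimate to the ungauged $Y^{\s,b}_{u_0,T}$-formulation via the isometry \eqref{YY1} and invoke Kwak's trilinear bound \eqref{YY2} (Proposition~3.1 in \cite{Kwak}). One small correction: the bound quoted from \cite{Kwak} already carries the exponent $-\frac12+2\eps$ on the left-hand side, so no additional $T^{\eps}$-gain via Lemma~\ref{LEM:lin}(ii) is required, and the final sentence of your last paragraph is superfluous (the relevant multilinear form is also $\Ns_1$ of \eqref{YY3}, i.e.\ the non-resonant trilinear piece, rather than the full cubic nonlinearity $\NN$ of \eqref{non1}, though your surrounding text makes the intent clear).
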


\begin{proof}
This is a direct consequence of 
Proposition 3.1 in \cite{Kwak}.
Indeed, in terms of  our notations, 
Proposition~3.1 in~\cite{Kwak} establishes the following trilinear bound:
\begin{align}
\| \Ns_1(u_1,u_2,u_3) \|_{Y^{\s,-\frac{1}{2}+2\eps  }_{u_0, T}} 
& \les \prod_{j = 1}^3  \| u_j \|_{Y^{\s,\frac{1}{2}}_{u_0, T}}
\label{YY2}
\end{align}

\noi
for $-\frac{1}{2}<\s<0$ and $0 < T\leq 1$, 
where $\Ns_1$ is defined by
\begin{align}
\Ns_1(u_1, u_2, u_3)(x,t):=\sum_{n\in \Z} e^{inx}
\sum_{\G(n)}
\ft u_1 (n_1,t)\cj {\ft u_2(n_2,t)}\ft u_3 (n_3,t).
\label{YY3}
\end{align}

\noi
We first note that the restriction $T\le 1$ in \eqref{YY2} 
does not play any role in the proof presented in~\cite{Kwak} 
and thus we can drop the restriction $T \leq 1$.
A similar comment applies to the lemmas below.

From 
\eqref{X4} 
and 
\eqref{YY3}
with 
\eqref{X1} and \eqref{X2}, we have
\begin{align}
\NN_1^N(v_1,v_2,v_3)
= \P_{\leq N} \JJ \big(\Ns_1( \P_{\leq N} u_1, \P_{\leq N} 
u_2, \P_{\leq N} u_3) \big), 
\label{YY4}
\end{align}

\noi
where 
$u_j = \JJ^{-1}(v_j)$.
%
Then, 
from \eqref{YY4}, \eqref{YY1},  and \eqref{YY2} together with the uniform (in $N$)
boundedness of $\P_{\le N}$ on the $X^{s, b}_T$-
and $Y^{s, b}_{u_0, T}$-spaces, we have 
\begin{align*}
\| \NN_1^N(v_1,v_2,v_3) \|_{X^{\s,-\frac{1}{2}+2\eps  }_T} 
& = 
\| \P_{\leq N} \Ns_1(\P_{\leq N}u_1,\P_{\leq N}u_2,\P_{\leq N}u_3) \|_{Y^{\s,-\frac{1}{2}+2\eps  }_{u_0, T}} \\
& \les \prod_{j = 1}^3  \| \P_{\le N} u_j \|_{Y^{\s,\frac{1}{2} }_{u_0, T}} 
\le  \prod_{j = 1}^3  \| v_j \|_{X^{\s,\frac{1}{2}+ \eps }_T}, 
\end{align*}

\noi
where, in the last step,  we used the monotonicity of the $X^{s, b}$-norm in 
the parameter $b$.
This yields \eqref{YY2a}.
\end{proof}

\begin{lemma}
\label{LEM:T2}	
Let $-\frac{1}{3}<   \s <0 $ and $T>0$. 
Given $N \in \N \cup\{\infty\}$, 
let  $v_N$ be  the smooth solution to \eqref{4NLS7} 
 with $v_N|_{t = 0} = u_0 \in C^\infty(\T)$. 
 Then, we have 
\begin{equation}
\begin{split}
\sup_{|n|\leq N} 
& \bigg| \Im  \bigg( \int_0^T \sum_{\G_N(n)}
e^{it \Ta(\bar n)}
\ft v_N (n_1, t)\cj{\ft v_N (n_2, t)}\ft v_N (n_3, t)\cj{\ft v_N (n, t)} dt \bigg)   \bigg|\\
& \les  
\| v_N\|_{X^{\s,\frac{1}{2}+\eps}_T }^4 + 
\| v_N\|_{X^{\s,\frac{1}{2}+\eps}_T }^6+\|v_N\|_{X^{\s,\frac{1}{2}+ \eps}_T}^8,
\end{split}
\label{X10}
\end{equation}

\noi
where $\G_N(n)$ is as in \eqref{G2} and $\G_N(n) = \G(n)$ when $N = \infty$.
Here, the implicit constant in~\eqref{X10} is independent of 
 $N \in \N$.

\end{lemma}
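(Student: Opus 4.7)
Plan. I would prove \eqref{X10} by performing two successive normal form reductions (integration by parts in time) on the quartic time integral, producing a quartic boundary term from the first reduction, a sextic boundary term from the second, and an octic remainder integral. These three pieces correspond precisely to the three powers $4$, $6$, $8$ of $\|v_N\|_{X^{\s,\frac{1}{2}+\eps}_T}$ appearing on the right-hand side of \eqref{X10}.

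First I would pass to the interaction representation via $a_n(t):=e^{itn^4}\ft v_N(n,t)$, so that $\ft v_N(n_1)\cj{\ft v_N(n_2)}\ft v_N(n_3)\cj{\ft v_N(n)} = e^{-it\phi(\bar n)}a_{n_1}\cj{a_{n_2}}a_{n_3}\cj{a_n}$ with $\phi(\bar n)$ as in \eqref{phi1}--\eqref{phi2}. The integrand in \eqref{X10} then carries the oscillation $e^{it(\Ta(\bar n)-\phi(\bar n))}$. On $\G_N(n)$, the factorization \eqref{phi2} gives $\phi(\bar n)\neq 0$ with $|\phi(\bar n)|\gtrsim |n_1-n_2||n_1-n|\jb{n_{\max}}^2$; since $\Ta(\bar n)$ is uniformly bounded for smooth $u_0$, one has $|\Ta-\phi|\gtrsim |\phi|$ away from a bounded region of $\bar n$ whose contribution can be absorbed directly. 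Writing $e^{it(\Ta-\phi)}=\frac{1}{i(\Ta-\phi)}\dt e^{it(\Ta-\phi)}$ and integrating by parts in $t$ yields a quartic boundary term $\sum_{\G_N(n)}\frac{1}{i(\Ta-\phi)}\bigl[e^{it(\Ta-\phi)}a_{n_1}\cj{a_{n_2}}a_{n_3}\cj{a_n}\bigr]_0^T$ together with a remainder where $\dt$ hits the product $a_{n_1}\cj{a_{n_2}}a_{n_3}\cj{a_n}$. Using the gauged equation \eqref{4NLS7} on the Fourier side, each $\dt a_m = -ie^{itm^4}\bigl(\widehat{\NN_1^N(v_N)}(m)+\widehat{\NN_2^N(v_N)}(m)\bigr)$, so substituting converts the remainder into a sextic form in $v_N$; I would then apply a second normal form reduction to the $\NN_1^N$-portion of this sextic (the $\NN_2^N$-portion being of lower effective multilinearity, handled directly via \eqref{X4}), producing a sextic boundary term plus an octic remaining integral.

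Each of the resulting quartic, sextic, and octic multilinear expressions is then bounded in $\ell^\infty_n$ by exploiting the divisor $|\Ta-\phi|^{-1}\les|\phi|^{-1}$ together with the $\jb{n_{\max}}^{-2}$ smoothing in \eqref{phi2} to absorb the sum over $\G_N(n)$, and by using \eqref{YY1} together with the embedding \eqref{X7a} to pass from pointwise-in-time estimates to the $X^{\s,\frac{1}{2}+\eps}_T$-norms appearing in \eqref{X10}; the multilinear analysis parallels that of \cite{OW1, Kwak, LSZ}, with an additional $\cj{a_n}$ factor carried through. The main obstacle is precisely that the left-hand side of~\eqref{X10} is measured in $\ell^\infty_n$ rather than in $\ell^2_n$: with $\s<0$, individual Fourier coefficients $a_n(t)$ cannot be controlled pointwise in $n$ by $\|v_N\|_{H^\s}$, so the uniform-in-$n$ control of the sums over $\G_N(n)$ must be extracted entirely from the phase cancellation $(\Ta-\phi)^{-1}$ and the $\jb{n_{\max}}^{-2}$ gain from \eqref{phi2}. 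A substantial portion of this combinatorial and analytic work is shared with the normal form estimates carried out in Section~\ref{SEC:NF} to establish Proposition~\ref{PROP:energy}, but with the weight $\jb{n}^{2s}$ absent here.
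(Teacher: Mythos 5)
Your plan is a genuinely different route from the paper's, which does not redo any normal form analysis for this lemma. The paper's proof is a reduction: using the gauge identity $\|\JJ_{u_0}(u)\|_{X^{s,b}} = \|u\|_{Y^{s,b}_{u_0}}$ (\eqref{YY1}), it translates the left-hand side of \eqref{X10} to the ungauged variable $u$ and then simply cites Kwak's Proposition~3.4 (recorded as \eqref{YY6}) together with Corollary~3.3 (the bound $\|u(T)\|_{H^\s}^2 \lesssim \|u_0\|_{H^\s}^2 + \|u\|_{Y^{\s,\frac12}_{u_0,T}}^4$) for $N = \infty$, and argues for finite $N$ that Kwak's proof goes through verbatim once $\P_{\leq N}$ is inserted in the right places, by uniform boundedness of the projection. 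In that approach the eighth power on the right of \eqref{X10} arises \emph{not} from a second integration by parts but from substituting Corollary~3.3 into the quartic boundary term $\|u(T)\|_{H^\s}^4$, followed by relaxing $b = \frac12$ to $b = \frac12 + \eps$ and applying \eqref{X7a} to absorb the $\|u_0\|_{H^\s}$ factors.

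Your direct reproof via integration by parts in the $v$-variable with phase $\Ta - \phi$ could plausibly be made to work, but as written it has genuine gaps. First, the entire technical content of the lemma is in the multilinear estimates after the integration by parts, and these are only claimed to ``parallel'' the estimates of \cite{OW1, Kwak, LSZ}; in particular the good/bad region decomposition that Kwak performs \emph{before} integrating by parts (the good region being handled directly by a $4$-linear estimate, without any IBP) is omitted, and you do not show how the $\ell^\infty_n$ requirement and the $\jb{n_{\max}}^{-8\s}$ loss from the four negative Sobolev weights are absorbed by the $\phi^{-2}$ divisor — this is precisely where the difficulty lies. Second, the second normal form reduction you propose is unnecessary and complicates the argument: after one IBP the sextic time integral is directly estimable (this is what Kwak does, yielding the $\|v\|^6$ term), whereas your second IBP introduces a new near-resonant set for the sextic phase $\wt\phi_2 = (\Ta_1 - \phi_1) + (\Ta_2 - \phi_2)$ that would itself need a careful treatment. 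Third, your justification ``$\Ta$ is uniformly bounded for smooth $u_0$'' gives constants depending on $\|u_0\|_{\ell^\infty}$, which is \emph{not} controlled by $\|u_0\|_{H^\s}$ for $\s < 0$; to get constants uniform on $H^\s$-bounded sets, as the application in Lemma~\ref{LEM:app1} requires, you need the sharper observation $|\Ta(\bar n)| \lesssim \jb{n_{\max}}^{-2\s}\|u_0\|_{H^\s}^2 \ll |\phi(\bar n)|$ for $n_{\max}$ large, coming from $-2\s < 2$ when $\s > -\frac13$.
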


\begin{proof}
\noi
$\bullet$ {\bf Case  1:} $N = \infty$.
\quad 
We first recall Proposition 3.4 in \cite{Kwak}; 
given  $-\frac{1}{3}\leq    \s <0 $ and $0 < T \leq 1$, 
 we have 
\begin{equation}
\begin{split}
\sup_{n \in \Z}
& \bigg| \Im  \bigg( \int_0^T \sum_{\G(n)}
\ft u  (n_1, t)\cj{\ft u  (n_2, t)}\ft u  (n_3, t)\cj{\ft u  (n, t)} dt \bigg)   \bigg|\\
& \les \|u_0 \|_{H^\s}^4 + 
\Big( \|u_0 \|_{H^\s}^2+ 
\|u\|_{Y^{\s,\frac{1}{2} }_{u_0, T}} ^4\Big)^2 + 
\|u\|_{Y^{\s,\frac{1}{2} }_{u_0, T}} ^4 + 
\|u\|_{Y^{\s,\frac{1}{2} }_{u_0, T}} ^6
\end{split}
\label{YY6}
\end{equation}

\noi
for any smooth solution  $u$ to \eqref{4NLS2}
with $u |_{t = 0} = u_0 \in C^\infty(\T)$.
As mentioned in the proof of Lemma \ref{LEM:T1}, 
we can drop the restriction $T \leq 1$ and 
the estimate \eqref{YY6} indeed holds for any $T > 0$
(at least for {\it smooth} solutions; see Remark \ref{REM:T4} below).

Given $u_0 \in C^\infty(\T)$, 
let $v$ be the solution to \eqref{4NLS6}
with $v|_{t = 0} = u_0$.
Then, 
$u = \JJ^{-1}(v)$ satisfies~\eqref{4NLS2}
with $u|_{t = 0} = u_0$.
Hence, from 
\eqref{YY6}
with 
\eqref{X1},  \eqref{X2}, 
and \eqref{YY1}, 
 we obtain
\begin{equation*}
\begin{split}
\sup_{n \in \Z} 
& \bigg| \Im  \bigg( \int_0^T \sum_{\G(n)}
e^{it \Ta(\bar n)}
\ft v (n_1, t)\cj{\ft v (n_2, t)}\ft v (n_3, t)\cj{\ft v (n, t)} dt \bigg)   \bigg|\\
& \les \|u_0 \|_{H^\s}^4 + 
\Big( \|u_0 \|_{H^\s}^2+ 
\|v\|_{X^{\s,\frac{1}{2}}_T} ^4\Big)^2 + 
 \| v\|_{X^{\s,\frac{1}{2}}_T }^4+\|v\|_{X^{\s,\frac{1}{2}}_T}^6\\
& \les 
\| v\|_{X^{\s,\frac{1}{2}+\eps}_T }^4 + 
\| v\|_{X^{\s,\frac{1}{2}+\eps}_T }^6+\|v\|_{X^{\s,\frac{1}{2}+ \eps}_T}^8,
\end{split}
\end{equation*}

\noi
where, 
by relaxing the temporal regularity from $b = \frac 12$ 
to $b = \frac 12 + \eps$, we used \eqref{X7a}
in the last step.
This proves \eqref{X10} for $N = \infty$.

\medskip

\noi
$\bullet$ {\bf Case  2:} $N < \infty$.
\quad 
As in the case $N = \infty$, 
we establish \eqref{X10} for $N < \infty$
by reducing the estimate to an analogue of~\eqref{YY6}.
For this purpose, 
we first recall  the proof of 
 Proposition 3.4 in~\cite{Kwak} (namely, the estimate \eqref{YY6}).
 First, we 
divide the domain 
 $\G(n)$ into 
a good region 
 $\G^\text{good}(n)$ and a bad region $\G^\text{bad}(n)$.\footnote{The precise definitions
 of $\G^\text{good}(n)$ and a bad region $\G^\text{bad}(n)$ are not important for our purpose.}
Then, 
the good part, i.e.~the contribution to \eqref{YY6} from  $\G^\text{good}(n)$,  is treated by 
establishing 4-linear estimates  (Cases II and III in the proof 
of \cite[Proposition 3.4]{Kwak}), 
yielding the third term on the right-hand side of \eqref{YY6}.
In handling the bad part, 
i.e.~the contribution to \eqref{YY6} from  $\G^\text{bad}(n)$
  (corresponding to Case I in the proof 
of \cite[Proposition 3.4]{Kwak}),
we first apply 
 integration by parts in time
 (as in \cite{TT, NTT, MT})
 and write
 \begin{equation}
\begin{split}
    \int_0^T  & \sum_{\G^\text{bad}(n)}
\ft u  (n_1, t)\cj{\ft u  (n_2, t)}\ft u  (n_3, t)\cj{\ft u  (n, t)} dt\\
& 
=  \int_0^T \sum_{\G^\text{bad}(n)}
e^{-i\phi(\bar n)t}\ft w  (n_1, t)\cj{\ft w  (n_2, t)}\ft w  (n_3, t)\cj{\ft w  (n, t)} dt
 \\
& 
=  \sum_{\G^\text{bad}(n)}
\frac{e^{-i\phi(\bar n)t}}{-i\phi(\bar n)}\ft w  (n_1, t)\cj{\ft w  (n_2, t)}\ft w  (n_3, t)\cj{\ft w  (n, t)}
\bigg|_{t =0}^T\\
& 
\quad 
+    \int_0^T \sum_{\G^\text{bad}(n)}
\frac{e^{-i\phi(\bar n)t}}{i\phi(\bar n)}
\dt \Big(\ft w  (n_1, t)\cj{\ft w  (n_2, t)}\ft w  (n_3, t)\cj{\ft w  (n, t)} \Big)dt\\
& =: \1_n + \II_n, 
\end{split}
\label{YY8}
\end{equation}
 
\noi
where $\phi(\bar n)$ is as in \eqref{phi1} 
and $w(t) = S(-t) u(t)$ denotes 
the interaction representation of $u$.
As for $\1_n$, 
a simple 4-linear estimate yields
\begin{equation*}
\sup_{n \in \Z} |\1_n|
 \les \|u_0 \|_{H^\s}^4 + \|u(T) \|_{H^\s}^4. 
\end{equation*}

\noi
Combining this with the following bound (see Corollary 3.3 in \cite{Kwak}):
\begin{align*}
 \|u(T) \|_{H^\s}^2  \les \|u_0 \|_{H^\s}^2 + \|u\|_{Y^{\s,\frac{1}{2} }_{u_0, T}} ^4,  
\end{align*}

\noi
we obtain
\begin{equation*}
\sup_{n \in \Z} |\1_n|
 \les \|u_0 \|_{H^\s}^4 + 
\Big( \|u_0 \|_{H^\s}^2+ 
\|u\|_{Y^{\s,\frac{1}{2} }_{u_0, T}} ^4\Big)^2, 
\end{equation*}

\noi
yielding the first two  terms on the right-hand side of \eqref{YY6}.
As for $\II_n$, recalling that $u$ satisfies~\eqref{4NLS2}, 
we see that $w(t) = S(-t) u(t)$ satisfies 
\begin{align}
 i \dt w = S(-t) \NN(S(t) u).
 \label{YY9}
\end{align}

\noi
See also \eqref{4NLS8} below.
By applying the product rule in taking a time derivative in \eqref{YY8}
and substituting \eqref{YY9},
we express $\II_n$ as 
a sum of  6-linear  terms, each of which 
can be bounded by 
establishing 6-linear estimates.
This yields the fourth term on the right-hand side of \eqref{YY6}.

In establishing \eqref{X10} for $N < \infty$, 
we repeat the argument in Case 1
 and first reduce the proof of~\eqref{X10} to 
establishing the following analogue of \eqref{YY6}:
\begin{equation}
\begin{split}
\sup_{|n| \leq N}
& \bigg| \Im  \bigg( \int_0^T \sum_{\G_N(n)}
\ft u_N  (n_1, t)\cj{\ft u_N  (n_2, t)}\ft u_N  (n_3, t)\cj{\ft u_N  (n, t)} dt \bigg)   \bigg|\\
& \les \|u_0 \|_{H^\s}^4 + 
\Big( \|u_0 \|_{H^\s}^2+ 
\|u_N\|_{Y^{\s,\frac{1}{2} }_{u_0, T}} ^4\Big)^2 + 
\|u_N\|_{Y^{\s,\frac{1}{2} }_{u_0, T}} ^4 + 
\|u_N\|_{Y^{\s,\frac{1}{2} }_{u_0, T}} ^6
\end{split}
\label{YY10}
\end{equation}

\noi
for any smooth solution  $u_N$ to \eqref{4NLS4}
with $u |_{t = 0} = u_0 \in C^\infty(\T)$.
Once \eqref{YY10} is established, 
we can simply repeat the reduction in Case 1 
(with $u_N = \JJ^{-1}(v_N)$) 
and obtain \eqref{X10}
for $N < \infty$.

Lastly, note that 
the only difference between the equations \eqref{4NLS4} and \eqref{4NLS2} is the presence
of the frequency cutoff $\P_{\le N}$.
Hence, in view of the uniform (in $N$) boundedness
of $\P_{\le N}$ on the relevant spaces, 
we see that the proof of
\eqref{YY6} described above
(namely, the proof of \cite[Proposition 3.4]{Kwak})
can be  directly applied\footnote{including the integration-by-parts
argument in \eqref{YY8}.  We just need to insert $\P_{\le N}$
in appropriate places.} to  establish \eqref{YY10}
for $N < \infty$.
This concludes the proof of 
Lemma \ref{LEM:T2}.
\end{proof}

\begin{lemma}\label{LEM:T3}
Let $-\frac{1}{3} <  \s <0 $ and $T>0$. 
Given $N \in \N$, 
let  $v$ and $v_N$  be  the smooth solutions to \eqref{4NLS6} and \eqref{4NLS7}, respectively, 
 with $v|_{t = 0} = v_N|_{t = 0} = u_0 \in C^\infty(\T)$. 
Then, we have
\begin{align}
\begin{split}
 \sup_{|n|\le N}
& \bigg|   \Im  \bigg( \int_0^T \sum_{\G_N(n)}
e^{it \Ta(\bar n)}
\Big( 
\ft v (n_1, t)\cj{\ft v (n_2, t)}\ft v (n_3, t)\cj{\ft v (n, t)}\\
& \hphantom{XXXXXXXXXX}
- 
\ft v_N (n_1, t)\cj{\ft v_N (n_2, t)}\ft v_N (n_3, t)\cj{\ft v_N (n, t)} \Big)dt  \bigg)   \bigg|\\
& \leq  C\Big(  \| v \|_{X^{\s,\frac{1}{2}+\eps}_T }, 
\|v_N \|_{X^{\s,\frac{1}{2}+\eps}_T } \Big) 
\Big( \|v - v_N\|_{X^{\s,\frac{1}{2}+\eps }_T} + \|\P_{> \frac N3} v \|_{X^{\s,\frac{1}{2}+\eps }_T} \Big), 
\end{split}
\label{X11}
\end{align}

\noi
where  the implicit constant in~\eqref{X11} is independent of 
 $N \in \N$.

\end{lemma}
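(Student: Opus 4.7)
The strategy is to view \eqref{X11} as a ``difference version'' of the quantity appearing in \eqref{X10} and to mimic the proof of Lemma \ref{LEM:T2}, arranging matters so that every resulting multilinear expression carries exactly one distinguished factor of either $v - v_N$ or $\P_{>N/3} v$ measured in $X^{\s,1/2+\eps}_T$. The starting point is the trivial telescoping identity
\[
\ft v(n_1)\cj{\ft v(n_2)}\ft v(n_3)\cj{\ft v(n)}
- \ft v_N(n_1)\cj{\ft v_N(n_2)}\ft v_N(n_3)\cj{\ft v_N(n)}
= \sum_{k=1}^{4} A_k(\bar n,t),
\]
where each $A_k$ contains a single factor of $\ft{(v-v_N)}$ and three remaining factors drawn from $\ft v$ or $\ft v_N$. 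Since the sum is over $\G_N(n)$ with $|n|, |n_j| \le N$, we may freely insert $\P_{\le N}$ into each occurrence of $v$ or $v_N$ at the Fourier side.

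I would then reproduce the good/bad dichotomy of $\G_N(n)$ from the proof of \cite[Proposition 3.4]{Kwak}. In the good region, the 4-linear $X^{\s,b}$-estimates in \cite{Kwak}, applied to each $A_k$ with the $\ft{(v-v_N)}$ slot playing the role of one input, produce directly a bound of the form $C\bigl(\|v\|_{X^{\s,1/2+\eps}_T}, \|v_N\|_{X^{\s,1/2+\eps}_T}\bigr)\cdot\|v-v_N\|_{X^{\s,1/2+\eps}_T}$, consistent with \eqref{X11}. In the bad region, one integrates by parts in time exactly as in \eqref{YY8}. This produces boundary terms at $t = 0, T$ together with an interior term obtained by applying the product rule to the four Fourier factors and substituting the equations. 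The boundary term at $t = 0$ vanishes because $v(0) = v_N(0) = u_0$; the boundary term at $t = T$ is a 4-linear form at a fixed time whose telescoping yields a factor $\|v(T) - v_N(T)\|_{H^\s}$, controlled by $\|v - v_N\|_{X^{\s,1/2+\eps}_T}$ via the embedding \eqref{X7a}, while the remaining three $H^\s$-norms are bounded using \eqref{growth1}.

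The interior term from the IBP is where the $\P_{>N/3} v$ contribution must arise. When a $\dt$ hits $\ft{(v-v_N)}(n_j,t)$ with $|n_j|\le N$, the equations \eqref{4NLS6} and \eqref{4NLS7} together with the identity $\widehat{\NN_i^N(v_N)}(n_j) = \widehat{\NN_i(\P_{\le N} v_N)}(n_j)$ for $|n_j|\le N$ allow me to split
\[
\widehat{\NN_i(v)}(n_j) - \widehat{\NN_i^N(v_N)}(n_j)
= \bigl[\widehat{\NN_i(v)}(n_j) - \widehat{\NN_i(\P_{\le N} v)}(n_j)\bigr]
+ \bigl[\widehat{\NN_i(\P_{\le N} v)}(n_j) - \widehat{\NN_i(\P_{\le N} v_N)}(n_j)\bigr].
\]
The second bracket is a multilinear difference in $\P_{\le N} v$ vs.\ $\P_{\le N} v_N$ that telescopes further into pieces each carrying a single factor of $v - v_N$. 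In the first bracket, at least one input must have frequency exceeding $N$, so that this input lies in the range of $\P_{>N}\subset\P_{>N/3}$ applied to $v$. After substitution, every resulting 6-linear expression carries one distinguished factor of either $v - v_N$ or $\P_{>N/3} v$, and the same 6-linear $X^{\s,b}$-estimates employed in Case I of \cite[Proposition 3.4]{Kwak} then close the argument.

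The main obstacle I expect is precisely the bookkeeping in the last step: one must verify that every telescoped 6-linear term really does host a single factor of $v - v_N$ or $\P_{>N/3} v$ in a slot compatible with the multilinear estimates of \cite{Kwak}, and that the $\NN_2$ contribution, which involves the diagonal correction $|\ft v(n)|^2 - |\ft u_0(n)|^2$, admits an analogous decomposition without loss. Once this bookkeeping is in place, however, no new multilinear analysis beyond that already used in the proof of Lemma \ref{LEM:T2} is required, and the bound \eqref{X11} follows.
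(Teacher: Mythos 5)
Your proposal is correct and follows essentially the same route as the paper: reduce \eqref{X11} to the corresponding quantity in $u$, $u_N$, reuse the good/bad dichotomy and integration-by-parts machinery from \cite[Propositions~3.4 and 3.8]{Kwak}, and decompose the nonlinearity difference so that each resulting multilinear piece carries a single distinguished factor of $v - v_N$ or $\P_{>N/3} v$.

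The one genuine (though minor) organizational difference worth flagging: the paper applies integration by parts to the two $4$-linear summands for $u$ and $u_N$ \emph{separately}, substitutes $\dt \ft w \mapsto \widehat{\M(w)}$ and $\dt \ft w_N \mapsto \widehat{\M_N(w_N)}$, and only \emph{then} telescopes, using the decomposition $\M(w) = \P_{\le N}\M(\P_{\le N}w) + \P_{\le N}(\M(w) - \M(\P_{\le N}w)) + \P_{>N}\M(w)$ at the $6$-linear level (see \eqref{X13}--\eqref{XY8}). You instead telescope the $4$-linear product \emph{before} integration by parts, so the $(v-v_N)$ factor is already isolated in each $A_k$, and the nonlinearity decomposition only enters when $\dt$ lands on the $(v-v_N)$ slot. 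Both orderings yield precisely the same $4$- and $6$-linear estimates; yours is arguably cleaner in that the $t=0$ boundary term manifestly vanishes term-by-term. You are also right that the $\NN_2$ contribution to your first bracket vanishes outright for $|n_j|\le N$ because it is diagonal, so no loss occurs there; the only non-trivial content of the first bracket comes from the $\NN_1$ piece, where one input necessarily has frequency $>N$, giving $\P_{>N}v \subset \P_{>N/3}v$ as you note.
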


\begin{proof}
This lemma follows from a slight modification of the proof of 
Proposition 3.8 in \cite{Kwak}
which states the following difference estimate; 
given  $-\frac{1}{3}\leq    \s <0 $ and $0 < T \leq 1$, 
 we have 
\begin{align}
\begin{split}
 \sup_{n \in \Z}
& \bigg|   \Im  \bigg( \int_0^T \sum_{\G(n)}
\Big( 
\ft u_1 (n_1, t)\cj{\ft u_1 (n_2, t)}\ft u_1 (n_3, t)\cj{\ft u_1 (n, t)}\\
& \hphantom{XXXXXXXXXX}
- 
\ft u_2 (n_1, t)\cj{\ft u_2 (n_2, t)}\ft u_2 (n_3, t)\cj{\ft u_2 (n, t)} \Big)dt  \bigg)   \bigg|\\
& \leq  C\Big( \|u_0\|_{H^\s},  \| u_1 \|_{Y^{\s,\frac{1}{2} }_{u_0, T}}, 
\|u_2 \|_{Y^{\s,\frac{1}{2} }_{u_0, T}} \Big) 
 \|u_1 - u_2\|_{Y^{\s,\frac{1}{2} }_{u_0, T}}
\end{split}
\label{XY1}
\end{align}

\noi
for any smooth solutions  $u_1, u_2$ to \eqref{4NLS2}
with $u_1 |_{t = 0} =u_2 |_{t = 0} = u_0 \in C^\infty(\T)$.
As before, 
we can drop the restriction $T \leq 1$ and 
the estimate \eqref{XY1} holds for any $T > 0$
(at least for {\it smooth} solutions; see Remark \ref{REM:T4} below).
The proof of \eqref{XY1} is analogous to that of \eqref{YY6}
(i.e.~Proposition 3.4 in \cite{Kwak}).
Namely, 
divide the domain 
 $\G(n)$ into 
a good region 
 $\G^\text{good}(n)$ and a bad region $\G^\text{bad}(n)$.
 Then, the good part is estimated
 by the same 4-linear estimates 
 as in the proof of \eqref{YY6}, 
 while,  as for the bad part, we apply 
 integration by parts at the level of the interaction representation
 (as in \eqref{YY8})
 and 
rewrite  the 4-linear terms 
into the 4-linear boundary terms and the 6-linear terms.

In order to prove \eqref{X11}, 
we aim to bound the following difference:
\begin{align}
\begin{split}
 \sup_{|n|\leq N}
& \bigg|   \Im  \bigg( \int_0^T \sum_{\G_N(n)}
\Big( 
\ft u (n_1, t)\cj{\ft u (n_2, t)}\ft u (n_3, t)\cj{\ft u (n, t)}\\
& \hphantom{XXXXXXXXXX}
- 
\ft u_N (n_1, t)\cj{\ft u_N (n_2, t)}\ft u_N (n_3, t)\cj{\ft u_N (n, t)} \Big)dt  \bigg)   \bigg|, 
\end{split}
\label{XY2}
\end{align}

\noi
where $u$ and $u_N$ are solutions 
to  \eqref{4NLS2} and \eqref{4NLS4}, respectively, 
with $u |_{t = 0} =u_N |_{t = 0} = u_0 \in C^\infty(\T)$.
We proceed as in the proof of \eqref{XY1} (= Proposition 3.8 in \cite{Kwak})
described above.
In studying~\eqref{XY1},  a difference appears in the integration-by-parts step
(in estimating the contribution from the bad region $\G^\text{bad}(n)$).
After applying integration by parts to 
the first summand in~\eqref{XY2}, the non-boundary  looks like 
\begin{align}
\int_0^T 
\frac{e^{i \phi(\bar n)t}}{i \phi(\bar n)}
\dt  \Big( \ft w (n_1, t)   \cj{\ft w (n_2, t)}\ft w (n_3, t)\cj{\ft w (n, t)} \Big)dt , 
\label{XY3}
\end{align}

\noi
where $\phi(\bar n)$ is as in \eqref{phi1}
and $w= S(-t) u(t)$ is the interaction representation of $u$.
See \eqref{YY8}.
We then apply the product rule and use \eqref{YY9}
to replace $\dt \ft w$ by  (the Fourier transform of) the cubic nonlinearity:
$\M(w)(t) : = S(-t) \NN(S(t) w(t)) $.
Write 
\begin{align} 
\M(w) = \P_{\le N} \M(\P_{\le N} w)
+ \P_{\le N} \big( \M(w) - \M(\P_{\le N} w)\big)
+ \P_{> N} \M( w).
\label{X13}
\end{align}

\noi
The first term on the right-hand side of \eqref{X13}
can be put together with the analogous contribution 
for $w_N(t) = S(-t) u_N(t) $ coming
from the 
 second summand in
 \eqref{XY2}, yielding
\begin{align} 
\begin{split}
\P_{\le N} & \M(\P_{\le N} w)
- \P_{\le N} \M(\P_{\le N} w_N)\\
& = \P_{\le N} \M(\P_{\le N} (w - w_N), \P_{\le N} w, \P_{\le N} w)\\
& \quad + \P_{\le N} \M(\P_{\le N}  w_N, \P_{\le N} (w - w_N) , \P_{\le N} w)\\
& \quad + \P_{\le N} \M(\P_{\le N}  w_N, \P_{\le N}  w_N , \P_{\le N} (w-w_N)).
\end{split}
\label{XY4}
\end{align}

\noi
Then, by substituting \eqref{XY4}
(for $\dt \ft w$) in \eqref{XY3}
and 
 applying the 6-linear estimate
from the proof of Proposition 3.4 in \cite{Kwak},  
we bound the contribution  
from this term to \eqref{XY2} 
by 
\begin{align}
&   C\Big( \|u_0\|_{H^\s},  \| u \|_{Y^{\s,\frac{1}{2} }_{u_0, T}}, 
\|u_N \|_{Y^{\s,\frac{1}{2} }_{u_0, T}} \Big) 
 \|u - u_N\|_{Y^{\s,\frac{1}{2} }_{u_0, T}}.
\label{XY5}
\end{align}

As for the second term on the right-hand side of \eqref{X13}, 
we first write
\begin{align} 
\begin{split}
 \P_{\le N}  &  \big( \M(w) - \M(\P_{\le N} w)\big)\\
&  = \P_{\le N} \M(\P_{> N}w, w, w)
+ \P_{\le N} \M(\P_{\le N}w, \P_{> N}w, w)\\
& \quad 
+ \P_{\le N} \M(\P_{\le N}w, \P_{\le N}w, \P_{> N}w).
\end{split}
\label{XY6}
\end{align}

\noi
Namely, one of the factors is given by $\P_{>N} w$.
Then, by substituting \eqref{XY6}
(for $\dt \ft w$) in \eqref{XY3}
and 
 applying the 6-linear estimate
from the proof of Proposition 3.4 in \cite{Kwak} as before,   
we bound the contribution from this term to \eqref{XY2} by 
\begin{align}
&   C\Big( \|u_0\|_{H^\s},  \| u \|_{Y^{\s,\frac{1}{2} }_{u_0, T}} \Big) 
 \|\P_{>N} u \|_{Y^{\s,\frac{1}{2} }_{u_0, T}}.
\label{XY7}
\end{align}

As for the   the third term on the right-hand side of \eqref{X13}:
\[ \P_{> N} \M( w)
=  \P_{> N} \M( w, w, w),\]

\noi
we first note that this term vanishes unless one of the factors has frequencies
greater than $\frac N3$.
 Then, proceeding as above, 
we bound the contribution from this term to \eqref{XY2} by 
\begin{align}
&   C\Big( \|u_0\|_{H^\s},  \| u \|_{Y^{\s,\frac{1}{2} }_{u_0, T}} \Big) 
 \|\P_{>\frac N3} u \|_{Y^{\s,\frac{1}{2} }_{u_0, T}}.
\label{XY8}
\end{align}

\noi
Then, putting \eqref{XY5}, \eqref{XY7}, and \eqref{XY8} together, we obtain
\begin{align}
\begin{split}
\eqref{XY2}
&  \leq 
   C\Big( \|u_0\|_{H^\s},  \| u \|_{Y^{\s,\frac{1}{2} }_{u_0, T}}, 
\|u_N \|_{Y^{\s,\frac{1}{2} }_{u_0, T}} \Big) 
\Big( \|u - u_N\|_{Y^{\s,\frac{1}{2} }_{u_0, T}}+ 
 \|\P_{>\frac N3} u \|_{Y^{\s,\frac{1}{2} }_{u_0, T}}\Big)\\
&  \leq 
   C'\Big(   \| u \|_{Y^{\s,\frac{1}{2}+\eps}_{u_0, T}}, 
\|u_N \|_{Y^{\s,\frac{1}{2}+\eps }_{u_0, T}} \Big) 
\Big( \|u - u_N\|_{Y^{\s,\frac{1}{2}+\eps }_{u_0, T}}+ 
 \|\P_{>\frac N3} u \|_{Y^{\s,\frac{1}{2}+\eps }_{u_0, T}}
\Big)
\end{split}
\label{XY9}
\end{align}

\noi
for any $\eps>0$. Here, 
in the second inequality, we used the embedding \eqref{X7a}
(for the $Y^{\s,\frac{1}{2}+\eps }_{u_0, T}$-space).

 Finally, 
given the  smooth solutions 
  $v$ and $v_N$  
to \eqref{4NLS6} and \eqref{4NLS7}, respectively, 
 with $v|_{t = 0} = v_N|_{t = 0} = u_0 \in C^\infty(\T)$, 
 let $u= \JJ^{-1}(v)$ and $u_N = \JJ^{-1}(v_N)$.
Then, the desired bound \eqref{X11}
follows from \eqref{XY9}
with \eqref{X1}, \eqref{X2}, and \eqref{YY1}.
 This concludes the proof of 
Lemma \ref{LEM:T3}.
\end{proof}

\begin{remark} \label{REM:T4}\rm
As pointed out in \cite{MT}, 
the smoothness assumption in Lemmas \ref{LEM:T2} and \ref{LEM:T3}
is not necessary.
In view of Lemma \ref{LEM:T1}, it suffices to assume that $v, v_N 
\in X^{\s,\frac{1}{2}+\eps }_T$ for $\s > -\frac 12$.
See~\cite{LSZ} for details.
We also point out that,
in Lemmas \ref{LEM:T2} and \ref{LEM:T3},  
 the endpoint $\s = -\frac 13$ is excluded 
so that the estimates in these lemmas  hold for 
{\it rough} solutions 
in $C([0, T]; H^\s(\T))$, $-\frac 13< \s < 0$, 
for {\it any} $T>0$,  using the global-in-time control \eqref{growth1}, 
which is valid only for  $\s > -\frac 13$.

\end{remark}

\subsection{Proof of  Proposition \ref{PROP:app2}} 

We now establish the approximation property of the truncated dynamics \eqref{4NLS4}
(Proposition \ref{PROP:app2}).
In view of the approximation result in $L^2(\T)$ (see \cite{OTz}), 
we restrict our attention to the range $-\frac 13 < \s < 0$.
We first establish the following preliminary lemma.

\begin{lemma}
\label{LEM:app1}
Let $ -\frac{1}{3}<\s<0$ and $u_0\in H^\s(\T)$.
Then, for any $T>0$ and $\dl>0$, there exists $N_0=N_0(T, u_0, \dl)\in \N$ such that
\begin{align*}
\| \Psi(t)(u_0)-\Psi_N(t)(u_0) \|_{H^{\s} }<\dl
\end{align*}

\noi
for any $t\in [0,T]$ and $N\geq N_0$. 
\end{lemma}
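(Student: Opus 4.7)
The plan is to show $\|v-v_N\|_{C_TH^\s_x}\to 0$, where $v=\Psi(t)(u_0)$ and $v_N=\Psi_N(t)(u_0)$, by a Duhamel-based difference estimate in $X^{\s,\frac{1}{2}+\eps}_T$ iterated over a short-time partition of $[0,T]$. By the uniform growth bound \eqref{growth1}, both $v$ and $v_N$ lie in a common ball of radius $R_0=R_0(T,\|u_0\|_{H^\s})$ in $X^{\s,\frac{1}{2}+\eps}_T$ independent of $N$.

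On a short subinterval $I_a=[a,a+\tau]$, Duhamel's formula together with Lemma~\ref{LEM:lin} gives
\[
\|v-v_N\|_{X^{\s,\frac{1}{2}+\eps}_{I_a}} \le C\|(v-v_N)(a)\|_{H^\s} + C\tau^{\eps}\bigl\|(\NN_1+\NN_2)(v)-(\NN_1^N+\NN_2^N)(v_N)\bigr\|_{X^{\s,-\frac{1}{2}+2\eps}_{I_a}}.
\]
Splitting the nonlinear difference as $[\NN_1(v)-\NN_1^N(v)]+[\NN_1^N(v)-\NN_1^N(v_N)]$ (and likewise for $\NN_2$), the Lipschitz bracket is multilinear in $v-v_N$ and controlled by Lemma~\ref{LEM:T1} by $CR_0^2\|v-v_N\|_{X^{\s,\frac{1}{2}+\eps}_{I_a}}$. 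The consistency bracket $\NN_1(v)-\NN_1^N(v)$ further decomposes into $(\mathrm{Id}-\P_{\le N})\NN_1(v)$ and $\P_{\le N}[\NN_1(v)-\NN_1(\P_{\le N}v)]$: the first vanishes in $X^{\s,-\frac{1}{2}+2\eps}_T$ as $N\to\infty$ by dominated convergence applied to $\NN_1(v)\in X^{\s,-\frac{1}{2}+2\eps}_T$ (Lemma~\ref{LEM:T1}), the second by Lemma~\ref{LEM:T1} is bounded by $R_0^2\|\P_{>N}v\|_{X^{\s,\frac{1}{2}+\eps}_T}\to 0$ since $v\in X^{\s,\frac{1}{2}+\eps}_T$.

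Choosing $\tau=\tau(R_0)$ so that $C\tau^{\eps}R_0^2\le\tfrac{1}{2}$ absorbs the Lipschitz term and yields $\|v-v_N\|_{X^{\s,\frac{1}{2}+\eps}_{I_a}} \le 2C\|(v-v_N)(a)\|_{H^\s}+\eta_N$ with $\eta_N\to 0$ as $N\to\infty$, uniformly in $a\in[0,T]$. Iterating over the $\lceil T/\tau\rceil$ subintervals of a partition of $[0,T]$ (passing from $X^{\s,\frac{1}{2}+\eps}$ to $H^\s$ at each endpoint via \eqref{X7a}) produces a geometric accumulation of size $C(T,R_0)\eta_N\to 0$, giving $\|v-v_N\|_{C_TH^\s_x}\to 0$ and hence the desired bound for any $\dl>0$ by choosing $N$ sufficiently large.

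The main obstacle lies in the consistency and Lipschitz estimates for $\NN_2$: unlike $\NN_1$, it is not a convolution-type trilinear form but a pointwise-in-Fourier cubic operator with coefficient $|\ft v(n,t)|^2-|\ft u_0(n)|^2$. To handle it one must first establish the a priori estimate $\sup_n\!\bigl||\ft v(n,t)|^2-|\ft u_0(n)|^2\bigr|\le C(R_0)$, derived by integrating $\dt|\ft v(n)|^2$ in time and applying Lemma~\ref{LEM:T2} mode-by-mode, and then exploit the specific form of $\NN_2^N$ in \eqref{X4} (in particular its high-frequency linear correction that produces cancellation on $|n|>N$) in order to show that the residual $\NN_2$ consistency error, whose Fourier transform equals $-|\ft v(n,t)|^2\ft v(n,t)\chi_{|n|>N}$, vanishes in $X^{\s,-\frac{1}{2}+2\eps}_T$ as $N\to\infty$.
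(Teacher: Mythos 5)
Your overall strategy (Duhamel difference estimate in $X^{\s,\frac12+\eps}$, contraction over a partition of $[0,T]$, dominated convergence on $\|\P_{>N}v\|_{X^{\s,\frac12+\eps}_T}$) is the one the paper uses, and your treatment of the $\NN_1$ consistency and Lipschitz brackets via Lemma~\ref{LEM:T1} is correct. There is, however, a genuine gap in the treatment of $\NN_2$, plus a structural omission that would need repair.

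The gap concerns the $\NN_2$ \emph{Lipschitz} bracket. Writing out $\NN_2^N(v)-\NN_2^N(v_N)$ on $|n|\le N$ produces, besides a term of the form $\big(|\ft v(n,t)|^2-|\ft u_0(n)|^2\big)\ft w_N(n,t)$, a second term of the form $\big(|\ft v(n,t)|^2-|\ft v_N(n,t)|^2\big)\ft v_N(n,t)$. Your a priori bound $\sup_n\big||\ft v(n,t)|^2-|\ft u_0(n)|^2\big|\le C(R_0)$ from Lemma~\ref{LEM:T2} handles the first term and gives a genuine Lipschitz-in-$w_N$ estimate. But applied to the second term it only yields a bound of size $O(\|v_N\|)$, not $O(\|w_N\|+o_N(1))$, so the iteration does not close: you cannot absorb. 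What is needed here is the \emph{difference} estimate of Lemma~\ref{LEM:T3}, which controls $\sup_{n,t}\big||\ft v(n,t)|^2-|\ft v_N(n,t)|^2\big|$ by $\|v-v_N\|_{X^{\s,\frac12+\eps}_\tau}+\|\P_{>N/3}v\|_{X^{\s,\frac12+\eps}_\tau}$, using crucially that $v$ and $v_N$ share the same initial datum. Without it the $\II_2$-type term is an $O(1)$ error, not a contraction term. Relatedly, on later subintervals $[a,a+\tau]$ the data $v(a)$ and $v_N(a)$ no longer coincide, so the fundamental-theorem-of-calculus step must be modified by writing $|\ft v(n,t)|^2-|\ft v_N(n,t)|^2 = \big(|\ft v(n,t)|^2-|\ft v(n,a)|^2\big)-\big(|\ft v_N(n,t)|^2-|\ft v_N(n,a)|^2\big)+\big(|\ft v(n,a)|^2-|\ft v_N(n,a)|^2\big)$ and propagating the accumulated error from $[0,a]$; your ``geometric accumulation'' sentence implicitly assumes the step-estimate holds verbatim on every subinterval, which it does not.

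There is also a structural issue with estimating $\|v-v_N\|_{X^{\s,\frac12+\eps}_T}$ directly. The paper first peels off the high-frequency parts $\P_{>N}v$ and $\P_{>N}v_N$ (the latter being the explicit modified free evolution $S_{u_0}(t)\P_{>N}u_0$, controlled in $L^\infty_TH^\s_x$ by $\|\P_{>N}u_0\|_{H^\s}$) and only runs the Duhamel iteration on $\P_{\le N}(v-v_N)$. In your scheme the high-frequency linear correction $\sum_{|n|>N}|\ft u_0(n)|^2\ft w_N(n,t)$ surviving in $\NN_2^N(v)-\NN_2^N(v_N)$, and the fact that $\P_{>N}v_N$ is not obviously controlled in the standard $X^{\s,\frac12+\eps}$-norm uniformly in $N$ (its time frequency lives on $\tau=-n^4-|\ft u_0(n)|^2$, not $\tau=-n^4$, and $|\ft u_0(n)|^2$ is unbounded for $u_0\in H^\s$ with $\s<0$), both become problematic. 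Projecting to $|n|\le N$ before running the iteration, as the paper does, removes both difficulties at once.
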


\begin{proof}

We first consider the high frequency part of the dynamics.
Recalling that 
 $\P_{>N}\Psi_N(t)(u_{0})=S_{u_0}(t)\P_{>N}u_{0} $, 
 where $S_{u_0}(t)$ is as in \eqref{X6}.
Hence, there exists $N_1=N_1( u_0, \dl) \in \N$ such that 
\begin{align*}
\| \P_{>N}\Psi_N(t)(u_0)  \|_{L^\infty_T H^\s_x}
=\|S_{u_0}(t)\P_{>N}u_{0}\|_{L^\infty_T H^\s_x}
=\| \P_{>N}u_{0} \|_{H^\s}<\frac{\dl}{4}
\end{align*} 

\noi
for any  $N\geq N_1$. 
From \eqref{growth1} with $N = \infty$ and the Lebesgue dominated convergence theorem, 
we have  
\begin{align*}
\| \P_{>N}\Psi(t)(u_0)  \|_{L^\infty_TH^\s_x}
\les\|  \P_{>N}\Psi(t)(u_{0}) \Vert_{X^{\s,\frac{1}{2} +\eps}_T}  <\frac{\dl}{4}
\end{align*}

\noi
for any $N \ge N_2=N_2(T, u_0, \dl) \in \N$.

Hence, 
it suffices to show that there exists $N_3=N_3(T, u_0, \dl) \in \N$ such that
\begin{align}
\|  \P_{\leq N} \Psi (t)(u_0)-\P_{\leq N}\Psi_N(t)(u_0)  \|_{L^\infty_T H^\s_x}<\frac{\dl}{2}
\label{Y4}
\end{align}

\noi
for any $N\geq N_3$.
 By writing \eqref{4NLS6} and \eqref{4NLS7} in the Duhamel formulations
 with $v (t)= \Psi(t) (u_0)$
 and   $v_N (t)= \Psi_N(t) (u_0)$, we have
\begin{align}
\begin{split}
\P_{\leq N}v (t) -\P_{\leq N}v_N (t)
& = -i \sum_{j = 1}^2\int_0^t S(t-t') \big(\P_{\leq N}\NN_j(v) -\P_{\le N} \NN_j^N(v_N)   \big)(t')dt'\\
& = : \1 + \II.
\end{split}
\label{Y5}
\end{align}

We set $w_N = \P_{\le N} v - \P_{\le N} v_N$.
We first estimate $\1$.
From \eqref{X1a} and \eqref{X4}, 
we have 
\begin{equation*}
\begin{split}
\P_{\leq N} & \NN_1(v)-\P_{\leq N}\NN_1^N(v_N)\\
&=\sum_{ | n| \leq N} e^{inx}
\sum_{\G_N(n)}
e^{it\Ta(\bar{n})} \Big(\ft w_N (n_1,t)\cj{\ft v(n_2,t)} \ft v (n_3,t)\\
&\hphantom{X}
 + \ft v_N (n_1,t)\cj{\ft w_N(n_2,t)}\ft v(n_3,t)
+\ft v_N (n_1,t) \cj{\ft v_N(n_2,t)}\ft w_N(n_3,t)  \Big) \\
&\hphantom{X}
+\sum_{ | n| \leq N} e^{inx} 
\sum_{\substack{\G(n)\\ \max\limits_{j = 1, 2, 3}|n_j| > N}}  
e^{it\Ta(\bar n )}\ft v (n_1,t)\cj {\ft v(n_2,t)}\ft v (n_3,t).
\end{split}
\end{equation*}

\noi	
Hence, from Lemmas \ref{LEM:lin} and  \ref{LEM:T1} with \eqref{growth1}, 
we have
\begin{equation}
\begin{split}
\|\1 \|_{X^{\s, \frac{1}{2}+\eps}_\tau}
& \les \tau^\eps \| \P_{\leq N} \NN_1(v)-\P_{\leq N}\NN_1^N(v_N)  \|_{X^{\s,-\frac{1}{2}+ 2\eps}_\tau}\\
&\le \tau^\eps C(T,R)\Big( \| w_N\|_{X^{\s,\frac{1}{2} +\eps}_\tau } 
+ \| \P_{>N}v \|_{X^{\s,\frac{1}{2} +\eps}_\tau}\Big)
\end{split}
\label{Y6}
\end{equation}

\noi
for any $\tau \in [0, T]$, where $R = \|u_0 \|_{H^\s}$.

Next, we consider $\II$ in \eqref{Y5}.
From \eqref{X3} and \eqref{X4}, we have 
\begin{equation}
\begin{split}
 \II & =  i \int_0^t S(t - t') 
\sum_{| n| \leq N}e^{inx}
 \Big(  |\ft v (n,t')|^2-|\ft u_0(n) |^2  \Big)\ft w _N(n,t')
\,  dt'\\
 & \hphantom{X}+  i \int_0^t S(t - t') 
 \sum_{| n| \leq N}e^{inx} \Big( |\ft v (n,t')|^2-|\ft v_N (n,t') |^2      \Big)\ft v _N(n,t') \, dt' \\
&=: \II_1+\II_2.
\end{split}
\label{Y7}
\end{equation}

\noi
By Lemma \ref{LEM:lin}, the fundamental theorem of calculus, \eqref{4NLS6},  Lemma \ref{LEM:T2}, 
and \eqref{growth1}, we have 
\begin{align}
\|  & \II_1 \|_{X^{\s,\frac{1}{2}+ \eps}_\tau} 
 \les \tau^\eps \|  (i \dt - \dx^4) \II_1 \|_{X^{\s,-\frac{1}{2}+ 2\eps}_\tau}
\le \tau^\eps \| (i \dt - \dx^4) \II_1 \|_{L^2_\tau H^\s_x}
\notag \\
& \les \tau^\eps   \sup_{\substack{t \in [0, \tau]\\  | n| \leq N}} 
\bigg| \Re \int_0^t \dt \ft v(n, t')  \cj {\ft v (n, t') } dt' \bigg|
\cdot  
\| w_N \|_{X^{\s,\frac{1}{2}+ \eps}_\tau}
\notag \\
& 
=  \tau^\eps   \sup_{\substack{t \in [0, \tau]\\  | n| \leq N}} 
 \bigg| \Im  \bigg( \int_0^t \sum_{\G(n)}
e^{it \Ta(\bar n)}
\ft v (n_1, t')\cj{\ft v (n_2, t')}\ft v (n_3, t')\cj{\ft v (n, t')}  dt'  \bigg)\bigg|
\cdot  
\| w_N \|_{X^{\s,\frac{1}{2}+ \eps}_\tau}
\notag \\
& \le \tau^\eps C(T,R)\| w_N \|_{X^{\s,\frac{1}{2}+\eps}_\tau} 
\label{Y8}
\end{align} 	

\noi
for any $\tau \in [0, T]$.

Recalling that $v|_{t = 0} = v_N|_{t = 0} = u_0$, 
it follows from 
the fundamental theorem of calculus, \eqref{4NLS6},  \eqref{4NLS7},  
Lemmas \ref{LEM:T3} and \ref{LEM:T2}, and \eqref{growth1}
that 
\begin{align}
\Big||\ft v  & (n,t)|^2-|\ft v_N (n,t) |^2     \Big|
 \leq \Big||\ft v (n,t)|^2-|\ft u_0(n)|^2 \Big| +  \Big||\ft v_N (n,t) |^2     -|\ft u_0(n)|^2\Big|
\notag \\
& \le 2
\bigg| \Im  \bigg( \int_0^t \sum_{\G_N(n)}
e^{it \Ta(\bar n)}
\Big( 
\ft v (n_1, t')\cj{\ft v (n_2, t')}\ft v (n_3, t')\cj{\ft v (n, t')}
\notag \\
& \hphantom{XXXXXXXXXX}
- 
\ft v_N (n_1, t')\cj{\ft v_N (n_2, t')}\ft v_N (n_3, t')\cj{\ft v_N (n, t')} \Big)  dt'  \bigg)\bigg|
\notag \\
& + 2
\bigg| \Im  \bigg( \int_0^t 
\sum_{\substack{\G(n)\\ \max\limits_{j = 1, 2, 3}|n_j| > N}}  
e^{it \Ta(\bar n)}
\ft v (n_1, t')\cj{\ft v (n_2, t')}\ft v (n_3, t')\cj{\ft v (n, t')}dt'   \bigg)\bigg| \notag \\
& \leq  C(T, R) 
\Big( \|w_N\|_{X^{\s,\frac{1}{2}+\eps }_\tau} + \|\P_{> \frac N3} v \|_{X^{\s,\frac{1}{2}+\eps }_\tau} \Big), 
\label{Y9}
\end{align}

\noi
uniformly in  $| n| \leq N$ and $0 \le t \le \tau \le T$.
Then, from   \eqref{Y7},   Lemma \ref{LEM:lin},  \eqref{Y9}, 
and \eqref{growth1}, we obtain 
\begin{align}
\begin{split}
\|    \II_2 \|_{X^{\s,\frac{1}{2}+ \eps}_\tau } 
&  \les \tau^\eps \|  (i \dt - \dx^4) \II_2 \|_{X^{\s,-\frac{1}{2}+ 2\eps}_\tau }
\le \tau^\eps \| (i \dt - \dx^4) \II_2 \|_{L^2_\tau H^\s_x}\\
& \les \tau^\eps   \sup_{\substack{t \in [0, \tau ]\\  | n| \leq N}} 
\Big| |\ft v (n,t)|^2-|\ft v_N (n,t) |^2      \Big|
\cdot  
\| v_N \|_{X^{\s,\frac{1}{2}+ \eps}_\tau}\\
& \le \tau ^\eps C(T,R)
\Big( \|w_N\|_{X^{\s,\frac{1}{2}+\eps }_\tau } + \|\P_{> \frac N3} v \|_{X^{\s,\frac{1}{2}+\eps }_\tau } \Big).
\end{split}
\label{Y10}
\end{align}

Therefore, from
\eqref{Y5}, 
\eqref{Y6}, \eqref{Y7},
\eqref{Y8},  and \eqref{Y10}, 
we have
\begin{align}
\| w_N  \|_{X^{\s,\frac{1}{2}+\eps }_\tau}
& \les \tau ^\eps C_*(T,R)
\Big( \|w_N\|_{X^{\s,\frac{1}{2}+\eps }_\tau } + \|\P_{> \frac N3} v \|_{X^{\s,\frac{1}{2}+\eps }_\tau } \Big)
\label{Y11}
\end{align}

\noi
for any $\tau \in [0, T]$.
By choosing $\tau=\tau(T, R) >0$ sufficiently small
such that 
\begin{align}
\tau ^\eps C_*(T,R) \leq \frac 12 , 
\label{Y11a}
\end{align}

\noi
we obtain,  from \eqref{Y11} with \eqref{X7a}, 
\begin{align}
\|  w_N \|_{L^\infty_\tau H^\s_x} 
\les \|  w_N  \|_{X^{\s,\frac{1}{2}+\eps }_\tau}
\leq C_1(T,R) 
\| \P_{> \frac N 3}v \|_{X^{\s,\frac{1}{2}+\eps}_T}.
\label{Y12}
\end{align}

We now consider the second time interval $I_2 = [\tau, 2\tau]$.
The estimates 
\eqref{Y6} on $\1$
and \eqref{Y8} on $\II_1$ also hold  on $[\tau, 2\tau]$.
As for the analysis on $\II_2$,  we need to make the following modification
in~\eqref{Y9}.
By writing
\begin{align}
\begin{split}
|\ft v   (n,t)|^2-|\ft v_N (n,t) |^2     
&  = \Big(|\ft v (n,t)|^2-|\ft v(n, \tau)|^2 \Big) - \Big(|\ft v_N (n,t) |^2 
    -|\ft v_N(n, \tau)|^2\Big) \\
& \hphantom{X} + 
\Big(|\ft v   (n,\tau)|^2-|\ft v_N (n,\tau) |^2 \Big),     
\end{split}
\label{Y13}
\end{align}

\noi
we estimate the first two terms on the right-hand side of \eqref{Y13}
by using the fundamental theorem of calculus as in \eqref{Y9}, while  the last term 
on the right-hand side of \eqref{Y13} is already controlled by \eqref{Y9}
with $t = \tau$.
Together with \eqref{Y12}, this gives 
\begin{align}
\begin{split}
|\ft v   (n,t)|^2-|\ft v_N (n,t) |^2     
& \leq  C(T, R) 
\Big( \|w_N\|_{X^{\s,\frac{1}{2}+\eps }([\tau, 2\tau])} 
+ \|\P_{> \frac N3} v \|_{X^{\s,\frac{1}{2}+\eps }([\tau, 2\tau])} \Big)\\
&  \hphantom{X}
+  C_1'(T, R)\|\P_{> \frac N3} v \|_{X^{\s,\frac{1}{2}+\eps }_\tau} 
\end{split}
\label{Y14}
\end{align}

\noi
uniformly in  $| n| \leq N$ and $0 \le \tau \le t \le 2\tau \le T$.
Therefore, proceeding as before with \eqref{Y14}, we have
\begin{align}
\| w_N  \|_{X^{\s,\frac{1}{2}+\eps }([\tau, 2\tau])}
& \les \tau ^\eps C_*(T,R)
 \|w_N\|_{X^{\s,\frac{1}{2}+\eps }([\tau, 2\tau])} 
+ 
\tau^\eps C_1''(T,R)
\|\P_{> \frac N3} v \|_{X^{\s,\frac{1}{2}+\eps }_T}.
\label{Y15}
\end{align}

\noi
Hence, from \eqref{Y15} and \eqref{Y11a}, we obtain 	
\begin{align*}
\|  w_N \|_{L^\infty([\tau, 2\tau]; H^\s)} 
\les \|  w_N  \|_{X^{\s,\frac{1}{2}+\eps }([\tau, 2\tau])}
\leq C_2(T,R) 
\| \P_{> \frac N 3}v \|_{X^{\s,\frac{1}{2}+\eps}_T}.
\end{align*}

\noi
By repeating this argument, 
we have
\begin{align*}
\|  w_N \|_{L^\infty(I_j; H^\s)} 
\les \|  w_N  \|_{X^{\s,\frac{1}{2}+\eps }(I_j)}
\leq C_j(T,R) 
\| \P_{> \frac N 3}v \|_{X^{\s,\frac{1}{2}+\eps}_T}
\end{align*}

\noi
on the $j$th time interval
$I_j = [(j-1)\tau, j\tau]\cap [0, T]$.
Note that
while $C_j(T, R)$ is increasing in $j$, 
it follows from 
our choice of $\tau$ in \eqref{Y11a} 
that 
$\max_{j = 1, \dots, [\frac{T}{\tau}]+1} C_j (T, R) \leq C^*(T, R)$ for some $C^*(T, R)>0$.
Therefore, we conclude that 
\begin{align}
\|  w_N \|_{L^\infty_T H^\s_x} 
\leq C^*(T,R) 
\| \P_{> \frac N 3}v \|_{X^{\s,\frac{1}{2}+\eps}_T}.
\label{Y18}
\end{align}

\noi
Then, the desired bound 
\eqref{Y4}
follows from \eqref{Y18} and 
the Lebesgue dominated convergence theorem
with \eqref{growth1}.
This completes the proof of Lemma \ref{LEM:app1}.
\end{proof}

\begin{remark}\label{rem:app2}
\rm

Due to the lack of local uniform continuity of the solution map in negative Sobolev spaces, 
it is crucial that $\Psi(t)(u_0)$ and $\Psi_N(t)(u_0)$ have the same initial condition 
$u_0$ in the proof of Lemma \ref{LEM:app1};
 see \eqref{Y9}.
\end{remark}

We conclude this section by presenting the proof of
 Proposition \ref{PROP:app2}.
 We follow  \cite[Proposition~2.10]{Tz} and \cite[Proposition B.3/6.21]{OTz}.

\begin{proof}[Proof of Proposition \ref{PROP:app2}]
Let $u_0\in A $, $t \in \R$, and small $\dl > 0$.
Write 
\[\Phi(t) (u_0) =\Phi_N(t)(\Phi_N(-t)\Phi(t)(u_0)  ).\]

\noi
By setting 
$w_N=\Phi_N(-t)\Phi(t)(u_0)$, 
 it suffices to show that there exists 
$ N_0(t,R, u_0, \dl)\in \N $
such that 
\begin{align*}
w_N\in A+B_{\dl}
\end{align*}
for every $N\geq N_0$.
Define $z_N$ by 
\begin{align*}
 z_N=\Phi_N(-t)\Phi(t)(u_0)-u_0
\end{align*}

\noi
such that $w_N=u_0+z_N$.
Since $u_0 \in A$, 
we only need to 
 check that   $z_N \in B_{\dl }$ for all $N \gg 1$.
 By writing
\begin{align*}
z_N=\Phi_N(-t)\big(\Phi(t)(u_0)-\Phi_N(t)(u_0)\big), 
\end{align*}

\noi
it follows
 from the uniform (in $N$) growth bound on the $H^\s$-norm
of solutions to \eqref{4NLS4}
(see \cite[Proposition 6.6]{OW1} for the case $N = \infty$) 
that
\begin{align*}
\|  z_N \|_{H^\s}
& = \big\|\Phi_N(-t)\big(\Phi(t)(u_0)-\Phi_N(t)(u_0)\big) \big\|_{H^\s}\\
&  
\leq C(t)
\|\Phi(t)(u_0)-\Phi_N(t)(u_0)\|_{H^\s}^{c(\s)}
\end{align*}

\noi
for some $c(\s) > 0$.
By the unitarity of the gauge transform $\JJ$ in \eqref{X1} (for fixed $t \in \R$)
and 
Lemma \ref{LEM:app1}, we have 
\begin{align*}
\|\Phi(t)(u_0)-\Phi_N(t)(u_0)\|_{H^\s} \too 0
\end{align*}

 \noi
 as $N \to \infty$.
 This implies that $z_N \in B_\dl$
 for $N \ge  N_0(t,R, u_0, \dl)\in \N $.
 This proves Proposition~\ref{PROP:app2}.
\end{proof}

\section{Normal form reductions}\label{SEC:NF}

In this section, we present the proof of Proposition \ref{PROP:energy} and Lemma \ref{LEM:E2}
by implementing an infinite iteration 
of normal form reductions as in \cite{OST, OW1}.
This procedure allows us to construct an infinite sequences of correction terms
and thus build  the desired modified energies $\EE_{N}(u)(t)$ and $\EE(u)(t)$ in \eqref{E2}.

\subsection{Main proposition}

In this subsection, 
by expressing the multilinear terms in the series expansion \eqref{E1}
in terms of the interaction representation, 
we state 
the bounds on these multilinear terms
 (Proposition \ref{PROP:energy2}).
By  assuming these bounds, 
we then present the proofs of  
 Proposition~\ref{PROP:energy} and Lemma~\ref{LEM:E2}.


In order to encode multilinear dispersion in an effective manner, 
it is convenient to work with the following interaction representation of $u$
defined by 
\begin{align*}
v(t):=S(-t)u(t).
\end{align*}

\noi 
On the Fourier side, we have
\begin{align*}
v_n(t)=e^{itn^4}u_n(t), 
\end{align*}

\noi 
where, for simplicity of notation, 
we set $v_n(t) = \ft v(n, t)$, etc.
We use this short-hand notation in the remaining part of this section.
If $u$ is a solution to \eqref{4NLS2}, 
then
 $\{v_n\}_{n \in \Z}$ satisfies the following equation:
\begin{align}
\begin{split}
\dt v_n 
& = -i \sum_{\G(n)} e^{-i \phi(\bar n) t} v_{n_1}\cj{v_{n_2}}v_{n_3}
+ i |v_n|^2 v_n \\
& =: \NN(v)_n + \RR(v)_n, 
\end{split}
\label{4NLS8}
\end{align}

\noi
where  $\phi(\bar n)$ and  $\G(n)$ are as in \eqref{phi1} and \eqref{G1}.  
By writing 
\eqref{4NLS5} in terms of the interaction representation, we have 
 the following finite dimensional system of ODEs:
\begin{align}
	\dt v_n = 
	-i 
	\sum_{\G_N(n)} e^{-i \phi(\bar n) t} v_{n_1}\cj{v_{n_2}}v_{n_3}
	+ i |v_n|^2 v_n,  \qquad |n| \leq N
	\label{4NLS10}
\end{align}

\noi
with $v|_{t = 0} = \P_{\leq N}v|_{t = 0}$,
namely, $v_n|_{t=0} = 0$ for $|n| > N$.

In the following, 
we simply say that $v$ is a solution 
to \eqref{4NLS10} 
if  $v$ is  a solution to \eqref{4NLS10} when $N \in \N$
and  to \eqref{4NLS8} when $N = \infty$. 
We state out main result in this section.

\begin{proposition}\label{PROP:energy2}
Let $\frac 3{10}< s \leq \frac 12$
and $\s = s - \frac 12 - \eps$ for some small $\eps > 0$. 
Then, given  $N \in \N \cup\{\infty\}$, 
there exist multilinear forms 
$\big\{\textup{\Ns}_{0, N}^{(j)}(t)\big\}_{j = 2}^\infty$, 
$\big\{\textup{\Ns}_{1, N}^{(j)}(t)\big\}_{j = 2}^\infty$, 
and 
$\big\{\textup{\Rs}_N^{(j)}(t)\big\}_{j = 2}^\infty$, 
depending on $t\in \R$,  
such that
\begin{align} 
\frac {d}{dt} \bigg(\frac 12 \| v (t) \|_{H^s}^2\bigg)
=  \frac {d}{dt}\bigg( \sum_{j = 2}^\infty \textup{\Ns}^{(j)}_{0, N}(t)(v(t))\bigg)
+ \sum_{j = 2}^\infty \textup{\Ns}^{(j)}_{1, N}(t)(v(t)) + \sum_{j = 2}^\infty \textup{\Rs}_N^{(j)}(t)(v(t))
\label{E8}
\end{align}
	
\noi
for any solution  $v \in C(\R; H^\s(\T))$ to \eqref{4NLS10}.\footnote{Note that the left-hand side of \eqref{E8}
does not a priori make sense for $v \in C(\R; H^\s(\T))$.
The identity \eqref{E8} is to be understood in the limiting sense for rough solutions.}
Here, 
$\textup{\Ns}_{0, N}^{(j)}(t)$ are $2j$-linear forms, 
while
$\textup{\Ns}_{1, N}^{(j)}$ and   $\textup{\Rs}_N^{(j)}$  are $(2j+2)$-linear forms, satisfying the following bounds in $H^\s(\T)$\textup{;}
there exist positive constants $C_0(j)$, $C_1(j)$, and $C_2(j)$, 
decaying faster than any exponential  rate\footnote{In fact, 
by slightly modifying the proof,   we can make  $C_0(j)$, $C_1(j)$, and $C_2(j)$
decay as fast as we want as $j \to \infty$.}
as $j \to \infty$ 
such that 
\begin{align} 
\sup_{t \in \R}
\big| \textup{\Ns}^{(j)}_{0, N}(t)(f_1, \dots, f_{2j})\big|
& \le
C_0(j)  \prod_{k= 1}^{2j}\|f_k\|_{H^\s},
\label{E9} 
\\
\sup_{t \in \R}
\big| \textup {\Ns}^{(j)}_{1, N}(t)(f_1, \dots, f_{2j+2})\big| 
& \le
C_1(j) \prod_{k= 1}^{2j+2}\|f_k\|_{H^\s}, 
\label{E10}
\\
\sup_{t \in \R}
\big| \textup {\Rs}^{(j)}_N(t)(f_1, \dots, f_{2j+2})\big|
&  \le
C_2(j)  \prod_{k= 1}^{2j+2}\|f_k\|_{H^\s}
\label{E11}
\end{align}

\noi
for $j = 2, 3, \dots$.
Note that these constants
$C_0(j)$, $C_1(j)$, and $C_2(j)$
are  independent of the cutoff size $N \in \N \cup \{\infty\}$.

\end{proposition}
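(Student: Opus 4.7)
The plan is to iterate normal form reductions (integration by parts in time) in the spirit of \cite{OST, OW1}. Working with the interaction representation $v(t) = S(-t) u(t)$, the equation \eqref{4NLS8}/\eqref{4NLS10} exposes the phase $e^{-i\phi(\bar n) t}$, and the factorization \eqref{phi2} of $\phi$ makes both dispersive gain (through $1/|\phi|$) and frequency counting available for the required multilinear estimates.

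Base step. Differentiating $\frac 12 \|v(t)\|_{H^s}^2 = \frac 12 \sum_n \jb{n}^{2s}|v_n|^2$ and substituting \eqref{4NLS10}, the diagonal term $R(v)_n = i|v_n|^2 v_n$ is purely imaginary after pairing with $\overline{v_n}$ and cancels; this is precisely the role of the renormalization in \eqref{4NLS2}. What survives is the $4$-linear quantity
\[
2\,\Im\sum_{|n|\leq N}\jb{n}^{2s}\sum_{\G_N(n)} e^{-i\phi(\bar n) t}\, v_{n_1}\overline{v_{n_2}} v_{n_3}\overline{v_n}.
\]
I then integrate by parts in time via $e^{-i\phi t} = \frac{d}{dt}\bigl[\frac{e^{-i\phi t}}{-i\phi}\bigr]$; the boundary produces the $4$-linear form $\Ns_{0,N}^{(2)}(t)$, while in the volume term the time derivative falls on one of the $v_{n_k}$. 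Expanding $\dt v_{n_k}$ via \eqref{4NLS10} separates off the diagonal-resonance contribution, which has no residual oscillation and is estimated directly as the $6$-linear form $\Ns_{1,N}^{(2)}(t)$; the remaining $6$-linear piece carries a composite phase and is split by modulation, with the low-modulation part defining $\Rs_N^{(2)}$ and the high-modulation part fed into stage $j=3$.

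Iteration and passage to rough solutions. At stage $j\geq 3$ I inherit a $2j$-linear expression with a composite phase, apply integration by parts in time to obtain the $2j$-linear boundary $\Ns_{0,N}^{(j)}(t)$, expand $\dt v_{n_k}$ in the resulting volume term via \eqref{4NLS10} to split off the diagonal-resonance piece as $\Ns_{1,N}^{(j)}(t)$ (a $(2j+2)$-linear form without oscillation), and modulation-split the remaining $(2j+2)$-linear oscillatory piece into the low-modulation part $\Rs_N^{(j)}$ and a high-modulation input for stage $j+1$. The projection $\P_{\leq N}$ only restricts the frequency sums in each $\G_N(n_k)$ and is carried unchanged throughout, so all constants produced are independent of $N\in\N\cup\{\infty\}$. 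Telescoping stages $2,\ldots,J$ and sending $J\to\infty$ yields \eqref{E8}, with absolute convergence of the series furnished by the multilinear bounds \eqref{E9}--\eqref{E11}. For each finite $N$, \eqref{4NLS10} is a smooth ODE and all manipulations are pointwise justified; the extension to rough solutions $v\in C(\R;H^\s)$ (and to $N=\infty$) is then obtained by approximation using Proposition~\ref{PROP:app2} and density, with \eqref{E8} read in the limiting sense indicated in the footnote.

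Multilinear estimates; the main obstacle. The crux is to verify \eqref{E9}--\eqref{E11} in $H^\s$ with $\s = s-\frac 12-\eps$ \emph{while carrying the weight} $\jb{n}^{2s}$ from the $H^s$-energy, uniformly in $N$. I will follow the bookkeeping of \cite{OW1}: parametrize $n = n_1 - n_2 + n_3$, extract $|\phi|^{-1}$ at each IBP via \eqref{phi2}, and sum in $n_2$ by Cauchy--Schwarz after freezing the other variables. Each further IBP supplies an additional $|\phi|^{-1}$, and by choosing the modulation thresholds $M_j$ to grow only polynomially in $j$, the constants $C_0(j), C_1(j), C_2(j)$ in \eqref{E9}--\eqref{E11} can be arranged to decay faster than any exponential rate in $j$. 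The sharpest constraint---and the essential difference from the $L^2$-based analysis of \cite{OW1}---arises at the base step $j=2$: the two factors $\jb{n}^{2s}$ coming from the $H^s$-pairing, together with the four $\jb{n_k}^{-2\s}$ weights on the factors, must be absorbed against a single $|\phi|^{-1} \sim n_{\max}^{-3}|n_1-n_2|^{-1}|n_1-n|^{-1}$; in the worst case (two comparable large frequencies) this balance is precisely what forces $s > \frac{3}{10}$ as stated in the hypothesis. The higher-stage estimates are softer thanks to the additional dispersive gains, so the overall regularity threshold is dictated by the base step, as in the previous normal-form analyses of \cite{OST, OW1}.
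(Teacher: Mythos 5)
Your proposal follows essentially the same route as the paper: pass to the interaction representation, iterate normal form reductions (integration by parts in time), split at each stage into a boundary term, a diagonal-resonant volume term, a nearly-resonant low-modulation piece, and a high-modulation piece to be fed forward, choose polynomially growing modulation thresholds to overwhelm the $2^{J-1}J!$ tree count, and estimate each multilinear form by Cauchy--Schwarz using the factorization \eqref{phi2} together with interpolation to absorb the $H^s$-weight $\jb{n}^{2s}$. Two small imprecisions (neither fatal): the regularity constraint $s>\frac{3}{10}$ is not dictated solely by the base step $j=2$ with higher stages being ``softer''---Lemma~\ref{LEM:N^J+1_1} shows the nearly-resonant form at \emph{every} stage $j\geq 2$ produces the same threshold---and the gain from \eqref{phi2} is $|\phi|\sim n_{\max}^{2}|(n_1-n_2)(n_1-n)|$, not $n_{\max}^{3}$; also your naming of $\Ns_1^{(j)}$ versus $\Rs^{(j)}$ is swapped relative to the paper, which is immaterial.
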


We now present the proofs of Proposition \ref{PROP:energy} and Lemma \ref{LEM:E2}
by assuming Proposition \ref{PROP:energy2}.
First, we prove Proposition \ref{PROP:energy}. 
Given $N \in \N \cup\{\infty\}$, let $u \in C(\R; H^\s(\T))$
be a  solution  to \eqref{4NLS5},
 satisfying  the growth bound \eqref{E3b}.
Then, we define the multilinear form 
$\NN^{(j)}_{0, N}$, 
$ \NN^{(j)}_{1, N}$, and 
$\RR^{(j)}_N$
by setting
\begin{align}
\begin{split}
\NN^{(j)}_{0, N}(u(t)) & := 	\Ns^{(j)}_{0, N}(t)(S(-t)u(t)), \\
 \NN^{(j)}_{1, N}(u(t))&:= 	\Ns^{(j)}_{1, N}(t)(S(-t)u(t)),\\ 
 \RR^{(j)}_N(u(t)) & : = \Rs^{(j)}_N(t)(S(-t)u(t)).
\end{split}
\label{ZZ1}
\end{align}

\noi
While the multilinear forms 
$\Ns^{(j)}_{0, N}$, $\Ns^{(j)}_{1, N}$, and $\Rs^{(j)}_N$ 
appearing in  Proposition~\ref{PROP:energy2}
are  
non-autonomous (i.e.~they depend on $t \in \R$), 
it is  easy to see from the construction of 
these multilinear forms carried out in the remaining part of this section
that 
the multilinear forms
$\NN^{(j)}_{0, N}$, 
$ \NN^{(j)}_{1, N}$, and 
$\RR^{(j)}_N$ defined in \eqref{ZZ1}
are indeed  autonomous.

From \eqref{E8} and \eqref{ZZ1} with the unitarity 
of $S(t)$, we obtain \eqref{E1}.
By defining the modified energy $\EE_N(u)$ 
as in \eqref{E2}, 
it follows from \eqref{E1} and \eqref{ZZ1}
\begin{align} 
\frac{d}{dt} \EE_N(u)(t) 
=  \sum_{j = 2}^\infty \Ns^{(j)}_{1, N}(t)(S(-t) u(t)) + \sum_{j = 2}^\infty \Rs_N^{(j)}(t)(S(-t)u(t)).
\label{ZZ2}
\end{align}

\noi
Then, from \eqref{ZZ2}
and 
Proposition \ref{PROP:energy2}
 together with the growth bound \eqref{E3b}
and the fast decay (in $j$)
of the constants
$C_0(j)$, $C_1(j)$, and $C_2(j)$, we obtain 
\begin{align*} 
\sup_{t \in [0, T]}\bigg| \frac{d}{dt} \EE_{N}(u)(t)  \bigg|
& \le
\sum_{j=2}^\infty \big(C_1(j) + C_2(j)\big)  R^{2j+2} \\
& \le C_s(R).
\end{align*}

\noi
This proves Proposition \ref{PROP:energy}.

We now turn to the proof of Lemma \ref{LEM:E2}.	
Let $u \in B_R \subset H^\s(\R)$. 
Then, from \eqref{E3a}, \eqref{ZZ1}, and~\eqref{E9} in Proposition \ref{PROP:energy2}, 
we have
\begin{align*}
| \Sf_N(u)|
& = \bigg|\sum_{j=2}^{\infty} \NN^{(j)}_{0,N}(\P_{\leq N}u )\bigg|
= \bigg| \sum_{j=2}^{\infty} \Ns^{(j)}_{0,N}(t) (\P_{\le N} S(-t) u) \bigg|\\
& \leq   \sum_{j=2}^{\infty} C_0(j) R^{2j}  \leq C_s(R) 
\end{align*}

\noi
for any $N \in \N\cup\{\infty\}$
(and any $t \in \R$). 
As for the convergence part, 
we refer the readers to Subsection 4.7 in \cite{OST} for details.
 This completes
 the proofs of Proposition \ref{PROP:energy} and Lemma \ref{LEM:E2}.

 \begin{remark}\label{REM:NF}\rm
	
In \cite{OST}, Proposition \ref{PROP:energy2}
was shown for $\s = 0$ (and $\frac 12 < s < 1$), where
the divisor counting argument played an important role.
In the current setting with $\s < 0$, 
we need to make use of the fourth order dispersion
to gain derivatives 
and,  for this purpose, we follow the argument in \cite{OW1}.
In particular,  we do not rely on the divisor counting argument.
The essential difference between our argument and that in~\cite{OW1}
is the presence of the weight $\jb{n}^{2s}$, 
coming from the $H^s$-norm squared on the left-hand side of~\eqref{E8}. 
Namely, for our problem, we need to exhibit a stronger smoothing property 
than that in \cite{OW1}, resulting in a worse regularity restriction $\s > - \frac 15$
in Proposition \ref{PROP:energy2}.

\end{remark}

\subsection{Notations: index by ordered bi-trees}
\label{SUBSEC:tree}

In this subsection, 
we go over notations from \cite{GKO, OST, OW1}
 for implementing an infinite iteration of normal form reductions.
 Our main goal is to apply
 normal form reductions  to the $H^s$-energy functional\footnote{More precisely, 
to the evolution equation satisfied by the $H^s$-energy functional.}
and thus
 we need tree-like structures that grow in two directions.
 For our analysis, 
 ordered bi-trees in Definition \ref{DEF:tree3}
 play an essential role.

\begin{definition} \label{DEF:tree1} \rm
(i) 
Given a partially ordered set $\TT$ with partial order $\leq$, 
we say that $b \in \TT$ 
with $b \leq a$ and $b \ne a$
is a child of $a \in \TT$,
if  $b\leq c \leq a$ implies
either $c = a$ or $c = b$.
If the latter condition holds, we also say that $a$ is the parent of $b$.

\smallskip

\noi
(ii) A tree $\TT $ is a finite partially ordered set satisfying
the following properties:
\begin{enumerate}
		
\item[(a)] Let $a_1, a_2, a_3, a_4 \in \TT$.
If $a_4 \leq a_2 \leq a_1$ and  
$a_4 \leq a_3 \leq a_1$, then we have $a_2\leq a_3$ or $a_3 \leq a_2$,
		
\item[(b)]
A node $a\in \TT$ is called terminal, if it has no child.
A non-terminal node $a\in \TT$ is a node 
with  exactly three ordered\footnote{For example, 
we simply label the three children as $a_1, a_2$, and $a_3$
by moving from left to right in the planar graphical representation of the tree $\TT$.
As we see below, we assign the Fourier coefficients of the interaction representation $v$ at $a_{1}$ and $a_{3}$, 
while we assign the complex conjugate of
the Fourier coefficients of $v$ at the second child $a_{2}$.} children denoted by $a_1, a_2$, and $a_3$,

\item[(c)] There exists a maximal element $r \in \TT$ (called the root node) such that $a \leq r$ for all $a \in \TT$,
		
\item[(d)] $\TT$ consists of the disjoint union of $\TT^0$ and $\TT^\infty$,
where $\TT^0$ and $\TT^\infty$
denote  the collections of non-terminal nodes and terminal nodes, respectively.
\end{enumerate}

\smallskip
	
\noi
(iii) A {\it bi-tree} $\TT = \TT_1 \cup \TT_2$ is 
a disjoint union of two trees $\TT_1$ and $\TT_2$,
where the root nodes $r_j$ of $\TT_j$, $j = 1, 2$,  are joined by an edge.
A bi-tree
$\TT$ consists of the disjoint union of $\TT^0$ and $\TT^\infty$,
where $\TT^0$ and $\TT^\infty$
denote  the collections of non-terminal nodes and terminal nodes, respectively.
By convention, we assume that the root node $r_1$ of the tree $\TT_1$ is non-terminal,
while the root node $r_2$ of the tree $\TT_2$ may be terminal.

\smallskip
	
\noi
(iv) Given a bi-tree $\TT = \TT_1 \cup \TT_2$, 
we define a projection $\Pi_j$, $j = 1, 2$, onto  a tree
by setting 
\begin{align*}
\Pi_j(\TT) = \TT_j.
\end{align*}

\end{definition}

Note that the number $|\TT|$ of nodes in a bi-tree $\TT$ is $3j+2$ for some $j \in \mathbb{N}$,
where $|\TT^0| = j$ and $|\TT^\infty| = 2j + 2$.
Let us denote  the collection of trees in the $j$th generation 
(namely,  with $j$ parental nodes) by $BT(j)$, i.e.
\begin{equation*}
BT(j) := \{ \TT : \TT \text{ is a bi-tree with } |\TT| = 3j+2 \}.
\end{equation*}

Next, we recall  the  notion of ordered bi-trees,
for which we  keep track of how a bi-tree ``grew''
into a given shape.

\begin{definition} \label{DEF:tree3} \rm
(i) We say that a sequence $\{ \TT_j\}_{j = 1}^J$  is a chronicle of $J$ generations, 
if 
\begin{enumerate}
\item[(a)] $\TT_j \in BT(j)$ for each $j = 1, \dots, J$,
\item[(b)]  $\TT_{j+1}$ is obtained by changing one of the terminal
nodes in $\TT_j$ into a non-terminal node (with three children), $j = 1, \dots, J - 1$.
\end{enumerate}
	
\noi
Given a chronicle $\{ \TT_j\}_{j = 1}^J$ of $J$ generations,  
we refer to $\TT_J$ as an {\it ordered bi-tree} of the $J$th generation.
We denote the collection of the ordered trees of the $J$th generation
by $\mathfrak{BT}(J)$.
Note that the cardinality of $\mathfrak{BT}(J)$ is given by 
$ |\mathfrak{BT}(1)| = 1$ and 
\begin{equation} 
\label{cj1}
|\mathfrak{BT}(J)| = 4\cdot 6 \cdot 8 \cdot \cdots \cdot 2J 
= 2^{J-1}   \cdot J!=: c_J,
\quad J \geq 2.
\end{equation}

\smallskip
	
\noi
(ii) Given an ordered bi-tree $\TT_J \in \mathfrak{BT}(J)$ as above, 
we define  projections $\pi_j$, $j = 1, \dots, J-1$, 
onto the previous generations
by setting
\begin{align*}
\pi_j(\TT_J) = \TT_j \in \mathfrak{BT}(j).
\end{align*}

\end{definition}

We stress that the notion of ordered bi-trees comes with associated chronicles.
For example, 
given two ordered bi-trees $\TT_J$ and $\wt{\TT}_J$
of the $J$th generation, 
it may happen that $\TT_J = \wt{\TT}_J$ as bi-trees (namely as planar graphs) 
according to Definition \ref{DEF:tree1},
while $\TT_J \ne \wt{\TT}_J$ as ordered bi-trees according to Definition \ref{DEF:tree3}.
In the following, when we refer to an ordered bi-tree $\TT_J$ of the $J$th generation, 
it is understood that there is an underlying chronicle $\{ \TT_j\}_{j = 1}^J$.

\smallskip

Given a bi-tree $\TT$, 
we associate each terminal node $a \in \TT^\infty$ with the Fourier coefficient (or its complex conjugate) of the interaction representation 
$v$ and sum over all possible frequency assignments.
For this purpose, we recall the notion of
index functions, 
assigning integers to {\it all} the nodes in $\TT$ in a consistent manner.

\begin{definition} \label{DEF:tree4} \rm
(i) Given  a bi-tree $\TT = \TT_1\cup \TT_2$, 
we define an index function ${\bf n}: \TT \to \mathbb{Z}$ such that
\begin{itemize}
		
\item[(a)] $n_{r_1} = n_{r_2}$, where $r_j$ is the root node of the tree $\TT_j$, $j = 1, 2$,
		
\item[(b)] $n_a = n_{a_1} - n_{a_2} + n_{a_3}$ for $a \in \TT^0$,
where $a_1, a_2$, and $a_3$ denote the children of $a$,
		
\item[(c)] $\{n_a, n_{a_2}\} \cap \{n_{a_1}, n_{a_3}\} = \emptyset$ for $a \in \TT^0$, 
		
\end{itemize}

\noi
where  we identified ${\bf n}: \TT \to \mathbb{Z}$ 
with $\{n_a \}_{a\in \TT} \in \mathbb{Z}^\TT$. 
We use 
$\mathfrak{N}(\TT) \subset \mathbb{Z}^\TT$ to denote the collection of such index functions ${\bf n}$
on $\TT$.
	
\smallskip
	
\noi
(ii) Given a tree $\TT$, we also define 
an index function ${\bf n}: \TT \to \mathbb{Z}$ 
by omitting the condition (a)
and denote by 
$\mathfrak{N}(\TT) \subset \mathbb{Z}^\TT$  the collection of index functions ${\bf n}$
on $\TT$.

\end{definition}

\begin{remark} \label{REM:terminal}
\rm 
(i) In view of the consistency condition (a), 
we can refer to $n_{r_1} = n_{r_2}$
as the frequency  at the root node without ambiguity.
We shall  simply denote it by $n_r$ in the following.

\smallskip
	
\noi
(ii) Given a bi-tree $\TT \in BT(J)$ and $n \in \Z$, consider
the summation over all possible frequency assignments
$\{ \bn \in \mathfrak{N}(\TT): n_r = n\}$.
While $|\TT^\infty| = 2J + 2$, 
there are $2J$ free variables in this summation.
Namely, the condition $n_r = n$ reduces two summation variables.
It is easy to see this by separately considering
the cases $\Pi_2(\TT) = \{r_2\}$
and $\Pi_2(\TT) \ne \{r_2\}$.
\end{remark}

\medskip

Given an ordered bi-tree 
$\TT_J$ of the $J$th generation with a chronicle $\{ \TT_j\}_{j = 1}^J$ 
and associated index functions ${\bf n} \in \mathfrak{N}(\TT_J)$,
 we use superscripts to denote such generations of frequencies.

Fix ${\bf n} \in \mathfrak{N}(\TT_J)$.
Consider $\TT_1 = \pi_1 (\TT_J)$ of the first generation.
Its nodes consist of the two root nodes $r_1$, $r_2$, 
and the children $r_{11}, r_{12}, $ and $r_{13}$ of the first root node $r_1$. 
We define the first generation of frequencies by
\[\big(n^{(1)}, n^{(1)}_1, n^{(1)}_2, n^{(1)}_3\big) :=(n_{r_1}, n_{r_{11}}, n_{r_{12}}, n_{r_{13}}).\]

\noi
The ordered bi-tree $\TT_2 = \pi_2(\TT_J)$ of the second generation 
is constructed from $\TT_1$ by
changing one of its terminal nodes $a \in \TT^\infty_1 = \{ r_2, r_{11}, r_{12}, r_{13}\}$ 
into a non-terminal node.
Then, we define
the second generation of frequencies by setting
\[\big(n^{(2)}, n^{(2)}_1, n^{(2)}_2, n^{(2)}_3\big) :=(n_a, n_{a_1}, n_{a_2}, n_{a_3}).\]

\noi
As we see below, this 
corresponds to introducing a new set of frequencies
after the first differentiation by parts.

In general, we construct  an ordered bi-tree $\TT_j = \pi_j (\TT_J)$ 
of the $j$th generation from $\TT_{j-1}$ by
changing one of its terminal nodes $a  \in \TT^\infty_{j-1}$
into a non-terminal node.
Then, we define
the $j$th generation of frequencies by
\[\big(n^{(j)}, n^{(j)}_1, n^{(j)}_2, n^{(j)}_3\big) :=(n_a, n_{a_1}, n_{a_2}, n_{a_3}).\]

We denote by  $\phi_j$  the 
phase function 
for the frequencies introduced at the $j$th generation:
\begin{align*}
\phi_j & =  \phi_j \big( n^{(j)},  n^{(j)}_1, n^{(j)}_2, n^{(j)}_3\big)
:=   \big(n_1^{(j)}\big)^4 - \big(n_2^{(j)}\big)^4 + \big(n_3^{(j)}\big)^4
- \big(n^{(j)}\big)^4.
\end{align*}

\noi
Note that we have $|\phi_j| \geq 1$ in view of Definition \ref{DEF:tree4} and \eqref{phi2}.
We also denote by $\mu_j$
the  phase function corresponding to the usual cubic NLS (at the $j$th generation):
\begin{align*}
\mu_j & =  \mu_j \big( n^{(j)},  n^{(j)}_1, n^{(j)}_2, n^{(j)}_3\big)
:=   \big(n_1^{(j)}\big)^2 - \big(n_2^{(j)}\big)^2 + \big(n_3^{(j)}\big)^2
- \big(n^{(j)}\big)^2\notag \\
& =-2 \big(n^{(j)} - n_1^{(j)}\big) \big(n^{(j)} - n_3^{(j)}\big).
\end{align*}

\noi
Then, from \eqref{phi2}, we have 
\begin{align}
|\phi_j| \sim (n^{(j)}_\text{max})^2
\cdot| \big(n^{(j)} - n_1^{(j)}\big) \big(n^{(j)} - n_3^{(j)}\big)|
\sim (n^{(j)}_\text{max})^2 \cdot |\mu_j|,
\label{MU2}
\end{align}

\noi
where $n^{(j)}_\text{max}$ is defined by 
\[n^{(j)}_\text{max}: = \max\big(|n^{(j)}|, 
|n_1^{(j)}|, |n_2^{(j)}|, |n_3^{(j)}| \big).\]

Lastly, given  an ordered bi-tree $\TT \in \mathfrak{BT}(J)$ for some $J \in \N$, 
define $A_j \subset \mathfrak{N}(\TT)$ by 
\begin{equation} \label{Cj} 
A_j = \big\{ |\wt{\phi}_{j+1}| \les (2J+4)^{3} |\wt{\phi}_j|\big\} 
\cup \big\{ |\wt{\phi}_{j+1}| \les (2J+4)^{3} |\phi_1|\big\} , 
\end{equation}

\noi
where $\wt{\phi}_j$  is defined by 
\begin{align}
\wt{\phi}_j = \sum_{k = 1}^j \phi_k.
\label{Cj2}
\end{align}

In Subsections \ref{SUBSEC:NF1} and \ref{SUBSEC:NF2}, 
we perform normal form reductions in an iterative manner.
At each step, 
we divide multilinear forms into
nearly resonant part (corresponding to the frequencies belonging to $A_j$) and highly non-resonant part
(corresponding to the frequencies belonging to $A_j^c$)
and apply a normal form reduction only to the highly non-resonant part.
Then, we prove the multilinear estimates 
\eqref{E9}, \eqref{E10}, and \eqref{E11}
for 
a solution $v$ to~\eqref{4NLS10}, 
uniformly in $N \in \N\cup\{\infty\}$.
For simplicity of presentation, however, 
we only consider the $N = \infty$ case 
and work on the equation~\eqref{4NLS8}
without the frequency cutoff $\ind_{|n|\leq N}$
in the following.
We point out that the same normal form reductions
and estimates hold
for the truncated equation \eqref{4NLS10}, uniformly in $N \in \N$, 
with  straightforward modifications:
(i) set 
$\ft v_n = 0$ for all $|n| >N$
and 
(ii) the multilinear forms for \eqref{4NLS10}
are obtained by inserting 
the frequency cutoff $\ind_{|n|\leq N}$ in appropriate places.\footnote{Using the bi-tree notation, 
it follows from \eqref{4NLS10} that 
we simply need to insert the frequency cutoff $\ind_{|n^{(j)}|\leq N}$
on the parental frequency $n^{(j)}$ assigned to each non-terminal node $a \in \TT^0$.}
In the following, we introduce multilinear forms
such as 
$\Ns_0^{(j)}$, $\Ns_1^{(j)}$, $\Ns_2^{(j)}$, and $\Rs^{(j)}$
for the untruncated equation \eqref{4NLS8}.
With a small modification, 
these multilinear forms give rise to
$\Ns_{0, N}^{(j)}$, $\Ns_{1, N}^{(j)}$, $\Ns_{2,N}^{(j)}$, and  $\Rs_N^{(j)}$,  
$N \in \N$,  
for the truncated equation \eqref{4NLS10},
appearing in Proposition \ref{PROP:energy2}.

We point out that 
given finite $N \in \N$, 
a solution to the truncated equation \eqref{4NLS10} is smooth
and therefore 
the formal computations presented in 
Subsections \ref{SUBSEC:NF1} and \ref{SUBSEC:NF2}
can be easily justified for solutions to \eqref{4NLS10}.
When $N = \infty$, 
 we need to impose  the regularity condition $v \in C(\R; H^\s(\T))$, $\s \ge \frac 16$, 
to justify the normal form procedure.
See \cite{GKO, OW1} for details.
Hence, given a solution 
$v \in C(\R; H^\s(\T))$ to~\eqref{4NLS8}
with $- \frac 15 < \s \leq 0 $ as in Proposition \ref{PROP:energy2}, 
we need to go through a limiting argument to obtain 
the identity \eqref{E8}.
This argument, however, is standard and thus we omit details.

\subsection{First few steps of normal form reductions}\label{SUBSEC:NF1}

In this section and the next section, we go over normal form reductions.
The formal computation at each step 
and the resulting multilinear forms
are essentially the same as those appearing in \cite{OST}
(modulo the slightly different frequency sets $A_j$ defined in \eqref{Cj}).
In terms of the actual estimates on the multilinear forms, however, 
we closely follow the argument in \cite{OW1}.
For readers' convenience, we  present essentially the  full details.

In this section, we go over 
 the first few steps.
Let $v$ be a smooth global solution to \eqref{4NLS8}.
With $\phi(\bar n)$ and $\G(n)$ as in~\eqref{phi1} and~\eqref{G1}, 
we have
\begin{align}
\begin{split}
\frac{d}{dt}\bigg(\frac{1}{2} \|v(t) \|_{H^s}^2\bigg)
& = - \Re i 
\sum_{n \in \Z} \sum_{\G(n)}
\jb{n}^{2s}
e^{ - i \phi(\bar n) t}   v_{n_1}(t) \cj{ v_{n_2}} (t) v_{n_3}(t) \cj{ v_n}(t)\\
&  = : \Ns^{(1)}(t)(v(t)).
\end{split}
\label{XN0}
\end{align}

	\smallskip
	
\begin{remark}\rm

(i)
Due to the presence of 
the phase factors
in their definitions, 
the multilinear forms such as $\Ns^{(1)}(t)(v(t))$ 
are non-autonomous in $t$.
In the following, however, we establish nonlinear estimates
on these multilinear forms, uniformly in  $t \in \R$,
by simply using $|e^{-i \phi (\bar n) t}| = 1$.
Hence, 
we  suppress such $t$-dependence
when there is no confusion.

\smallskip

\noi
(ii)
The complex conjugate signs on  $v_{n_j}$ do not play any significant role.
Hereafter,  we drop the complex conjugate sign.

\end{remark}

In view of \eqref{phi2} and \eqref{G1}, 
we have $|\phi(\bar n)| \geq 1 $
in \eqref{XN0}.
Then, 
by performing a normal form reduction, 
namely, 
differentiating by parts, 
and substituting the equation \eqref{4NLS8}, 
we obtain
\begin{align}
\Ns^{(1)}(v)(t)
& = 
 \Re \dt \bigg[
\sum_{\TT_1 \in \mathfrak{BT}(1)}
\sum_{\substack{ \bf n \in \mathfrak{N}(\TT_1) }}
\jb{n_r}^{2s}
\frac{  e^{- i  \phi_1 t } }{\phi_1}
\prod_{a \in \TT_1^\infty} v_{n_{a}}
\bigg]\notag \\
& \hphantom{X}
- \Re 
\sum_{\TT_1 \in \mathfrak{BT}(1)}
\sum_{\substack{ \bf n \in \mathfrak{N}(\TT_1) }}
\jb{n_r}^{2s}
\frac{ e^{- i  \phi_1 t } }{\phi_1}
\dt\bigg(\prod_{a \in \TT^\infty_1} v_{n_{a}}\bigg)
\notag\\
& =  \Re \dt \bigg[
\sum_{\TT_1 \in \mathfrak{BT}(1)}
\sum_{\substack{ \bf n \in \mathfrak{N}(\TT_1)}}
\jb{n_r}^{2s}
\frac{ e^{- i  \phi_1 t } }{\phi_1}
\prod_{a \in \TT^\infty_1} v_{n_{a}}
\bigg]\notag \\
& \hphantom{X}
-  \Re 
\sum_{\TT_1 \in \mathfrak{BT}(1)}
\sum_{b\in \TT_1^\infty}
\sum_{\substack{ \bf n \in \mathfrak{N}(\TT_1) }}
\jb{n_r}^{2s}
\frac{ e^{- i  \phi_1 t } }{\phi_1}
\RR(v)_{n_b}
\prod_{a \in \TT^\infty_1\setminus\{b\}} v_{n_{a}}\notag \\
& \hphantom{X}
- \Re 
\sum_{\TT_2 \in \mathfrak{BT}(2)}
\sum_{\substack{  \bf n \in \mathfrak{N}(\TT_2) }}
\jb{n_r}^{2s}
\frac{ e^{- i  ( \phi_1+ \phi_2) t } }{\phi_1}
\prod_{a \in \TT^\infty_2} v_{n_{a}}
\notag\\
& =: \dt \Ns_0^{(2)}(v)(t) +  \Rs^{(2)}(v)(t) + \Ns^{(2)}(v)(t).
\label{N1}
\end{align}

\noi
In the second equality, we
used the equation \eqref{4NLS8}
to replace  $\dt v_{n_b}$ by 
the resonant part $\RR(v)_{n_b}$ and the non-resonant part $\NN(v)_{n_b}$.
Note that the substitution of   $\NN(v)_{n_b}$
amounts to 
extending the tree $\TT_1\in \mathfrak{BT}(1)$
(and ${\bf n }\in \mathfrak{N}(\TT_1)$)
to $\TT_2 \in \mathfrak{BT}(2)$
(and to  ${\bf n }\in \mathfrak{N}(\TT_2)$, respectively)
by replacing  the terminal node $b \in \TT^\infty_1$
into a non-terminal node with three children $b_1, b_2,$ and $b_3$.

\begin{remark} \label{REM:tdepend} \rm
Strictly speaking, the  phase factor appearing in $\Ns^{(2)}(v)$
may be $\phi_1 - \phi_2$
when the time derivative falls on the terms with the complex conjugate.
In the following, however, we simply write it as $\phi_1 + \phi_2$ since
it does not make any difference in our analysis.
Also, we often replace $\pm 1$ and $\pm i$ by $1$
for simplicity when they do not play an important  role.
Lastly, for notational simplicity, 
we drop  the real part symbol 
on  multilinear forms
with the understanding that  all the multilinear forms
appear with 
the real part symbol.

\end{remark}

We first estimate  the boundary term $\Ns_0^{(2)}$.
In the remaining part of this section, 
we set $\s = \s(s) = s - \frac 12 - \eps$
for some small $\eps > 0$ as in \eqref{sig}.

\begin{lemma}\label{LEM:N0}	
Let $\textup{\Ns}_0^{(2)}$ be as in \eqref{N1}. Then, for $s>0$, we have	
\begin{align}
| \Ns^{(2)}_0(v) | \les \| v \|^4_{H^\s}.
\label{YN0}
\end{align}
\end{lemma}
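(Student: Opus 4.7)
The plan is to estimate $\Ns_0^{(2)}$ directly as a single $4$-linear form in frequency space. For the first generation, the bi-tree sum in~\eqref{N1} collapses (up to combinatorial constants) to
\[
\Ns_0^{(2)}(v)(t)=\sum_{n\in\Z}\sum_{(n_1,n_2,n_3)\in\G(n)}\frac{\jb{n}^{2s}}{\phi(\bar n)}\,e^{-i\phi(\bar n)t}\,v_{n_1}\bar v_{n_2}v_{n_3}\bar v_n.
\]
The main analytic input is the phase lower bound $|\phi(\bar n)|\gtrsim n_{\max}^{2}\,|n-n_1|\,|n-n_3|$ from \eqref{phi2}--\eqref{MU2}, valid on $\G(n)$ with $|n-n_1|,|n-n_3|\ge 1$, where $n_{\max}:=\max(|n|,|n_1|,|n_2|,|n_3|)$.

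Substituting $w_{n_j}:=\jb{n_j}^{\s}v_{n_j}$, so that $\|w\|_{\ell^2}=\|v\|_{H^\s}$, the bound \eqref{YN0} reduces to
\[
\sum_{n,\,(n_1,n_2,n_3)\in\G(n)}M(\bar n)\,|w_{n_1}w_{n_2}w_{n_3}w_n|\lesssim\|w\|_{\ell^2}^{4},
\]
where $M(\bar n):=\jb{n}^{2s}\,|\phi(\bar n)|^{-1}\,\jb{n_1}^{-\s}\jb{n_2}^{-\s}\jb{n_3}^{-\s}\jb{n}^{-\s}$. I focus on the case $0<s\le \tfrac12$ (the range $s>\tfrac12$ is strictly easier since then $-\s$ is small or negative). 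In this regime $-\s=\tfrac12+\eps-s\ge 0$, so each weight satisfies $\jb{n_j}^{-\s}\le\jb{n_{\max}}^{-\s}$. Combining with the phase bound gives
\[
M(\bar n)\lesssim\frac{n_{\max}^{\,2s-4\s-2}}{|n-n_1|\,|n-n_3|}=\frac{n_{\max}^{\,-2s+4\eps}}{|n-n_1|\,|n-n_3|},
\]
and, writing $n-n_2=2n-n_1-n_3$ so that $n_{\max}\gtrsim \jb{n-n_2}$, for $\eps<s/2$ one obtains the refined pointwise estimate $M(\bar n)\lesssim \jb{n-n_2}^{-2s+4\eps}\,|n-n_1|^{-1}\,|n-n_3|^{-1}$.

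The passage from this pointwise bound to the $4$-linear inequality proceeds by two Cauchy--Schwarz applications. For fixed $(n,n_2)$, the index $(n_1,n_3)$ is a one-parameter family through $n_3=n+n_2-n_1$; applying Cauchy--Schwarz in $n_1$ and invoking the elementary estimate
\[
\sum_{m\ne 0,\,m+k\ne 0}\frac{1}{m^{2}(m+k)^{2}}\lesssim\jb{k}^{-2}\qquad(k=n_2-n)
\]
produces a factor $\jb{n-n_2}^{-1}\|w\|_{\ell^2}^{2}$. The remaining sum over $(n,n_2)$ is a convolution bilinear form weighted by $\jb{n-n_2}^{-2s+4\eps-1}$; this weight lies in $\ell^{1}_{n-n_2}$ precisely when $s>2\eps$, and Young's convolution inequality then delivers $\|w\|_{\ell^2}^{4}=\|v\|_{H^\s}^{4}$, as required.

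The only real obstacle is the sharpness of the exponents: the two orders of smoothing from $1/|\phi|$ must absorb both the weight $\jb{n}^{2s}$ and the four low-regularity weights $\jb{n_j}^{-\s}$ with $-\s$ slightly above $\tfrac12$. The excess decay in $\jb{n-n_2}$ turns out to be exactly $2s-4\eps$, which fails to be summable as $s\to 0^{+}$. This is precisely why the lemma requires $s>0$ and why the present elementary argument does not extend to $s\le 0$.
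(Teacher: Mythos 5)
Your argument is correct and rests on exactly the same two inputs as the paper's proof: the factorized phase lower bound $|\phi_1|\gtrsim n_{\max}^2|n-n_1||n-n_3|$ from \eqref{MU2}, and two applications of Cauchy--Schwarz that exploit the condition $s>0$ (equivalently $-\s<\tfrac12$ plus a little room) to make the resulting lattice sums converge. The only difference is bookkeeping: the paper first applies Cauchy--Schwarz in the root-node variable $n$ to peel off one factor of $\|v\|_{H^\s}$, then a second Cauchy--Schwarz over the full slice $\G(n)$ against the weighted kernel, proving the single sup bound
\[
\sup_{n\in \Z}\sum_{\G(n)}\frac{n_{\max}^{4s-8\s}}{|\phi_1|^2}\lesssim 1,\qquad s>0,
\]
whereas you freeze $(n,n_2)$, do Cauchy--Schwarz along the one-parameter slice in $n_1$ via $\sum_m m^{-2}(m+k)^{-2}\lesssim \jb{k}^{-2}$, and close with Young/Schur in the remaining two variables. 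Both routes are elementary and give the same threshold $s>0$; the paper's version is slightly more compact and has the advantage of producing a sup-kernel bound in the form that recurs in the later lemmas of Section~\ref{SEC:NF}, while your version makes the convolution structure in $n-n_2$ explicit.
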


\begin{proof}
For notational simplicity, we drop the superscript $(1)$
in the frequencies $n^{(1)} = n_r$ and $n^{(1)}_j$.
From \eqref{MU2}, we have
\begin{align}
\sup_{n\in \Z}\sum_{\G(n)}
\frac{n_{\max}^{4s-8\s}}{|\phi_1|^2}
\les\sup_{n\in \Z}\sum_{\G(n)}
\frac{1}{|(n-n_1)(n-n_3)|^2 {n_{\max}^{4-4s +8\s}}}
\les 1,
\label{N01}
\end{align}

\noi 
provided that 
 $4-4s+8\s >0 $, namely, $s>0$. Then, by Cauchy-Schwarz inequality with $ | \mathfrak{BT}(\TT_1) |=1$ and \eqref{N01}, we have
\begin{align*}
| \Ns^{(2)}_0(v) | &\les \sum_{\TT_1 \in \mathfrak{BT}(1) }\sum_{n \in \Z} \sum_{ \substack{\bn \in \mathfrak{N}(\TT_1)\\ n_r=n }}\frac{n_{\max}^{2s-4\s}
}{| \phi_1 | }\prod_{a \in \TT_1^\infty }\jb{n_a}^{\s}v_{n_a}\\
& \leq  \| v\|_{H^\s} \Bigg\{ 
\bigg( \sup_{n\in \Z}\sum_{\substack{\G(n)}}
\frac{n_{\max}^{4s-8\s}}{|\phi_1|^2}\bigg)
\cdot \bigg(\sum_{n\in \Z} \sum_{\G(n)}  \prod_{i =1}^3 \jb{n_i}^{2\s}|v_{n_i}|^2 \bigg)
\Bigg\}^\frac{1}{2}\\
 &  \les  \|v\|_{H^\s}^4.
\end{align*}

\noi
This proves \eqref{YN0}	 
\end{proof}

Proceeding in an analogous manner, 
we obtain the following estimate on $\Rs^{(2)}$.

\begin{lemma}\label{LEM:R0}
Let $\textup{\Rs}^{(2)}$ be as in \eqref{N1}. Then, for $s>\frac{1}{4}$, we have
\begin{align*}
| \textup{\Rs}^{(2)}(v) |\les \| v \|_{H^\s}^6.
\end{align*} 
\end{lemma}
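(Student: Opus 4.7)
The plan is to reduce the estimation of $\Rs^{(2)}(v)$ to the same four-linear sum handled in the proof of Lemma \ref{LEM:N0}, with one extra symbol factor $\jb{n_b}^{-2\s}$ accounting for the additional resonant term $\RR(v)_{n_b} = i|v_{n_b}|^2 v_{n_b}$.  I would first take absolute values in the definition of $\Rs^{(2)}$ to obtain
\[
|\Rs^{(2)}(v)| \le \sum_{\TT_1\in \mathfrak{BT}(1)} \sum_{b\in \TT_1^\infty} \sum_{\bn \in \mathfrak{N}(\TT_1)} \frac{\jb{n_r}^{2s}}{|\phi_1|}\, |v_{n_b}|^3 \prod_{a\in \TT_1^\infty\setminus\{b\}} |v_{n_a}|,
\]
and then use the crude pointwise bound $|v_{n_b}|^2 \le \jb{n_b}^{-2\s}\|v\|_{H^\s}^2$ to remove two of the three factors at $n_b$, reducing matters to controlling
\[
\sum_{b \in \TT_1^\infty}\sum_{n\in\Z}\sum_{\G(n)} \frac{\jb{n}^{2s}\jb{n_b}^{-2\s}}{|\phi_1|}\prod_{a\in \TT_1^\infty}|v_{n_a}|
\]
by a constant multiple of $\|v\|_{H^\s}^4$.

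Next I would run the same two-step Cauchy-Schwarz argument as in the proof of Lemma \ref{LEM:N0}: first in the root variable $n=n_r$ (extracting $\jb{n}^\s|v_n|$), and then in the $\G(n)$ summation (extracting $\prod_{i=1}^3 \jb{n_i}^\s|v_{n_i}|$).  Since the unconstrained sum $\sum_n\sum_{\G(n)}\prod_{i=1}^3 \jb{n_i}^{2\s}|v_{n_i}|^2$ factorizes as $\|v\|_{H^\s}^6$, the estimate reduces to checking that
\[
M_b := \sup_{n\in\Z}\sum_{\G(n)} \frac{\jb{n}^{4s-2\s}\jb{n_b}^{-4\s}\prod_{i=1}^3\jb{n_i}^{-2\s}}{|\phi_1|^2} \lesssim 1
\]
for each $b\in\TT_1^\infty$.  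Whether $b=r_2$ (so $\jb{n_b}^{-4\s}=\jb{n}^{-4\s}$) or $b$ is a child of $r_1$ (so $\jb{n_b}^{-4\s}=\jb{n_i}^{-4\s}$ for some $i\in\{1,2,3\}$), distributing the nonnegative exponents $4s-2\s$ and $-2\s$ onto $n_{\max}$ yields the same upper bound $n_{\max}^{4s-12\s}$ on the numerator in both cases.

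Finally, using the lower bound $|\phi_1|\gtrsim n_{\max}^2 |(n-n_1)(n-n_3)|$ from \eqref{MU2} and the parametrization $n_2=n_1+n_3-n$, the claim $M_b\lesssim 1$ reduces to
\[
\sup_{n\in\Z}\sum_{\substack{n_1,n_3\in\Z\\n_1,n_3\ne n}} \frac{n_{\max}^{4s-12\s-4}}{(n-n_1)^2(n-n_3)^2} \lesssim 1.
\]
Since $\s=s-\tfrac{1}{2}-\eps$, the exponent becomes $4s-12\s-4=-8s+2+12\eps$, which is nonpositive precisely when $s\ge \tfrac{1}{4}+\tfrac{3\eps}{2}$.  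The hardest conceptual point is locating this critical balance: for $s>\tfrac{1}{4}$ and $\eps$ sufficiently small, we have $n_{\max}^{4s-12\s-4}\le 1$, and the remaining double sum is bounded uniformly in $n$ by $\big(\sum_{m\ne 0} m^{-2}\big)^2$.  This identifies the threshold $s>\tfrac{1}{4}$ as exactly the price paid for the two extra powers coming from $|v_{n_b}|^2 \le \jb{n_b}^{-2\s}\|v\|_{H^\s}^2$, which shift the critical exponent from $4s-8\s$ in Lemma \ref{LEM:N0} to $4s-12\s$ here.
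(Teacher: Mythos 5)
Your proposal is correct and follows essentially the same route as the paper: both reduce matters to the multiplier bound $\sup_{n}\sum_{\G(n)} n_{\max}^{4s-12\s}/|\phi_1|^2 \lesssim 1$, whose validity for $s > \frac14$ is the crux. The only cosmetic difference is how the cubed resonant factor $|v_{n_b}|^3$ is handled: you apply the pointwise bound $|v_{n_b}|^2 \le \jb{n_b}^{-2\s}\|v\|_{H^\s}^2$ (i.e.\ $\l^2\embeds\l^\infty$) to two of the three copies, whereas the paper cites $\l^2 \embeds \l^6$ to keep all three factors inside the Cauchy--Schwarz sum, but both lead to the same symbol $n_{\max}^{4s-12\s}$ and the same threshold.
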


\begin{proof}	
This lemma follows from  the proof of Lemma \ref{LEM:N0}
and $\l^2 \embeds \l^6$, 
once we observe that 
\begin{align*}
\sup_{n\in \Z}\sum_{\G(n)}
\frac{n_{\max}^{4s-12\s}}{|\phi_1|^2}
\les\sup_{n\in \Z}\sum_{\G(n)}
\frac{1}{|(n-n_1)(n-n_3)|^2 {n_{\max}^{4-4s+12\s}}}
\les 1,
\end{align*}

\noi
provided that  $4-4s +12\s >0 $, namely, $s> \frac 14$.
\end{proof}

As it is, 
we cannot estimate 
$\Ns^{(2)}$ in \eqref{N1}.
By dividing the frequency space into 
$A_1$ defined in~\eqref{Cj}
and its complement $A_1^c$,
we split $\Ns^{(2)}$ as 
\begin{equation} \label{N^2_1}
\Ns^{(2)} = \Ns^{(2)}_1 + \Ns_2^{(2)},
\end{equation}

\noi
where $\Ns^{(2)}_1$ is the restriction of $\Ns^{(2)}$
onto $A_1$
and
$\Ns_2^{(2)} := \Ns^{(2)} - \Ns^{(2)}_1$.
Thanks to the frequency restriction $A_1$, 
we can estimate the first term $\Ns^{(2)}_1$.

\begin{lemma}\label{LEM:N^2_1}
Let $\textup{\Ns}^{(2)}_1$ be as in \eqref{N^2_1}. Then, for $s > \frac 3{10}$, we have
\begin{align*}
| \textup{\Ns}^{(2)}_1(v) | \les \| v \|_{H^\s}^6.
\end{align*}
\end{lemma}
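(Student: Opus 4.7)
Since $|\mathfrak{BT}(2)| = c_2 = 4$ by \eqref{cj1}, it suffices to bound the contribution of a single ordered bi-tree $\TT_2 \in \mathfrak{BT}(2)$. My plan is to follow the Cauchy--Schwarz strategy of Lemmas~\ref{LEM:N0} and~\ref{LEM:R0}, applied now to six terminal factors instead of four, and to exploit the $A_1$-restriction \eqref{Cj} to gain control of the second-generation sum. Concretely, I would use $\jb{n_r} \lesssim \max_a \jb{n_a}$ to push the outer weight $\jb{n}^{2s}$ onto an appropriately chosen terminal $a^* \in \TT_2^\infty$ (picked by pigeonhole after splitting into cases by which generation $a^*$ belongs to), rewrite $|v_{n_a}| = \jb{n_a}^{-\sigma} w_{n_a}$ with $w_n := \jb{n}^{\sigma}|v_n|$ so that $\|w\|_{\ell^2} = \|v\|_{H^\sigma}$, extract the factor $w_{n_{a^*}}$ to be paired against an $\ell^2_{n_{a^*}}$-norm of the remaining weighted kernel, and apply a Cauchy--Schwarz on the inner sum over the other five terminals. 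This reduces the estimate to the weighted divisor bound
\[
M := \sup_{n \in \Z} \sum_{\substack{\bn \in \mathfrak{N}(\TT_2)\\ n_r = n,\ \bn \in A_1}} \frac{(n^{(1)}_{\max})^{4s - \alpha \sigma}}{|\phi_1|^2} \prod_{a \in \TT_2^\infty \setminus \{a^*\}} \jb{n_a}^{-2\sigma} \lesssim 1,
\]
with $\alpha$ determined by the Cauchy--Schwarz bookkeeping; one then obtains $|\Ns^{(2)}_1(v)| \lesssim M^{1/2}\|v\|_{H^\sigma}^6$.

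The core analytic step is to perform the sum over the two free second-generation variables first. The $A_1$-restriction reads $|\wt\phi_2| = |\phi_1 + \phi_2| \lesssim |\phi_1|$, and combining this with \eqref{MU2} in the form $|\phi_2| \sim (n^{(2)}_{\max})^2 |\mu_2|$ together with $\mu_2 = -2(n^{(2)} - n_1^{(2)})(n^{(2)} - n_3^{(2)})$ forces $|\mu_2| \lesssim |\phi_1|/(n^{(2)}_{\max})^2$. A dyadic decomposition of the second-generation frequencies combined with the standard divisor bound on integer factorizations then yields a bound on the inner second-generation sum as a controlled power of $|\phi_1|$ and $n^{(2)}_{\max}$, with three factors of the form $\jb{n_a}^{-2\sigma}$ (for the three new terminals born at the second generation) absorbed in the process. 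The remaining first-generation sum is handled exactly as in the proofs of Lemmas~\ref{LEM:N0} and~\ref{LEM:R0}: writing $|\phi_1|^{-2} \sim (n^{(1)}_{\max})^{-4}|\mu_1|^{-2}$ via \eqref{MU2}, the divisor count for $\mu_1$ then delivers the bound $M \lesssim 1$ provided the aggregate exponent on $n^{(1)}_{\max}$ is negative.

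The main obstacle will be balancing the exponents between the two generations: the $\jb{n_r}^{4s}$ outer weight drains part of the smoothing that $|\phi_1|^{-2}$ can provide, so the negative power of $\sigma = s - \tfrac12 - \eps$ that can be distributed across terminals is tighter than in the pure-$L^2$ argument of \cite{OW1}. In the critical configuration, where all six frequencies are comparable and the factorizations $|(n^{(j)} - n_1^{(j)})(n^{(j)} - n_3^{(j)})|$ saturate the $A_1$-cutoff, a careful tally shows that the combined condition from the first-generation divisor sum and the second-generation gain reduces to $8s - 2 - c\eps > 0$ for a suitable $c$, which yields the sharp threshold $s > \tfrac{3}{10}$ quoted in the lemma. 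Tracking these endpoint exponents correctly across the two-stage Cauchy--Schwarz is the delicate part of the argument.
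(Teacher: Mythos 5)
Your outline captures the skeleton of the argument (Cauchy--Schwarz plus a uniform kernel bound that exploits the $A_1$-restriction), but the details where the sharp exponent $s > \tfrac{3}{10}$ actually comes from are either absent or wrong. The paper does not push $\jb{n}^{2s}$ onto a maximal terminal by pigeonhole; instead it splits into the two cases $\Pi_2(\TT_2) = \{r_2\}$ (where $n_{r_2} = n$ is a terminal, and one pairs $\jb{n}^\s v_n$ against $\ell^2_n$, leaving $\jb{n}^{2s-\s}$ in the kernel) and $\Pi_2(\TT_2) \ne \{r_2\}$ (where $n_r$ is not a terminal, and one must use the factorization \eqref{YN4} of the sum over $\mathfrak{N}(\TT_2)$ into a product over $\Pi_1(\TT_2)$ and $\Pi_2(\TT_2)$ to justify the final Cauchy--Schwarz in $n$). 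Your proposal does not address this structural distinction, which is exactly where the root weight has to be distributed differently.

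The more serious issue is the engine that makes the kernel sum \eqref{TT3} converge. Your proposal invokes a dyadic decomposition of the second generation and then a \emph{divisor bound}, both here and (you claim) in Lemmas \ref{LEM:N0}--\ref{LEM:R0}. But those earlier lemmas and the present one do \emph{not} use divisor counting at all -- indeed Remark \ref{REM:NF} explicitly states that, unlike \cite{OST}, this paper does not rely on the divisor counting argument, since divisor bounds only give an $n^{\eps}$ count and cannot gain derivatives in negative regularity. What the paper actually does is: on $A_1$, write $|\phi_1|^{-2} \leq |\phi_1|^{-\al}|\phi_2|^{-(2-\al)}$ for an interpolation parameter $\al \in [1,2]$, apply $|\phi_j| \sim (n^{(j)}_{\max})^2 |\mu_j|$ from \eqref{MU2} to both factors, and then absorb the residual positive power $(n^{(2)}_{\max})^{6\s + 4 - 2\al}$ into $|\mu_2|$ using $|\mu_2| \les (n^{(2)}_{\max})^2$, producing $|\mu_1|^{-(1+\dl)}|\mu_2|^{-(1+\dl)}$ which is absolutely summable. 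The three inequalities $s > -\al + \tfrac32$, $s > \tfrac{\al}{3} - \tfrac16$, $s > \tfrac{2\al}{3} - \tfrac12$ are then optimized at $\al = \tfrac65$, which is exactly how $s > \tfrac{3}{10}$ arises. Your stated condition ``$8s - 2 - c\eps > 0$'' gives $s > \tfrac14$, not $s > \tfrac{3}{10}$, so the exponent bookkeeping in your proposal is not consistent with the claimed threshold. Without the interpolation-in-$\al$ step and the correct conversion of $(n^{(2)}_{\max})$-powers into $|\mu_2|$-powers, there is no route in your write-up to the sharp regularity restriction.
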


\begin{proof}
On $A_1$, we have 
 $|\phi_2|\les |\phi_1|$.
Then,  from  \eqref{MU2},  we have 
\begin{align} 
\begin{split}
\sup_{n \in \Z} & 
\sum_{\substack{{\bf n} \in \mathfrak{N}(\TT_2)\\ n_r = n\\|\phi_2| \les |\phi_1| }} 
 \frac{(n^{(1)}_{\max})^{4s-6\s} (n^{(2)}_{\max})^{-6\s}}{|\phi_1|^2} \\
&  \les
\sup_{n \in \Z}  \sum_{\substack{{\bf n} \in \mathfrak{N}(\TT_2)\\  n_r = n}} 
\frac{1} {|\mu_1 |^\al  |\mu_2|^{2-\al} (n^{(1)}_{\max})^{-4s+ 6\s+2\al}   (n^{(2)}_{\max} )^{6\s+4-2\al }}
\end{split}
\label{YN1}
\end{align}

\noi 
for any $0\le\al\le2$.
We  impose 
$-4s+ 6\s+2\al>  0$ and $6\s +4-2\al>  0$, 
namely, 
\begin{align}
s>-\al+\frac 32
\qquad \text{and} \qquad s>\frac \al3  -\frac 16.
\label{YN2}
\end{align}

\noi
In view of  the powers of 
$n^{(1)}_{\max}$ and $n^{(2)}_{\max}$ on the left-hand side of \eqref{YN1}, 
we may assume that $1 \le \al \le 2$.
From 
 $|\mu_2| \les (n^{(2)}_{\max})^2$,  we have
\begin{align*}
|\mu_2|^{2-\al}    (n^{(2)}_{\max} )^{6\s+4-2\al }
\ges 
|\mu_2|^{3s - 2\al + \frac 52 - 3\eps}.
\end{align*} 

\noi
Now, we impose 
$3s - 2\al + \frac 52  > 1$, namely, 
\begin{align}
s>\frac 23 \al - \frac 12.
\label{YN3}
\end{align}

\noi 
Under the conditions \eqref{YN2} and \eqref{YN3}, 
it follows  from \eqref{YN1} that there exists $\dl > 0$ such that 
\begin{align} 
\sup_{n \in \Z}
\sum_{\substack{{\bf n} \in \mathfrak{N}(\TT_2)\\ n_r = n\\|\phi_2| \les |\phi_1| }} 
\frac{(n^{(1)}_{\max})^{4s-6\s} (n^{(2)}_{\max})^{-6\s}}{|\phi_1|^2}  
&  \les
\sup_{n \in \Z}  \sum_{\substack{{\bf n} \in \mathfrak{N}(\TT_2)\\  n_r = n}} 
\frac{1} {|\mu_1 |^{1+\dl}  |\mu_2|^{1+\dl} } \les 1.
\label{TT3}
\end{align}

\noi
By optimizing 
the conditions \eqref{YN2} and \eqref{YN3}
with $\al = \frac 65$, 
we obtain the restriction $s > \frac 3{10}$.

\smallskip

\noi
$\bullet$ {\bf Case 1:}
We first consider the case  $\Pi_2(\TT_2) = \{r_2\}$.
Namely, the second root node $r_2$ is a terminal node.
By Cauchy-Schwarz inequality with \eqref{TT3}, we have
\begin{align*}
| \Ns^{(2)}_1(v) | 
& \les 
\sum_{n\in \Z}
\sum_{\substack{\TT_2 \in \mathfrak{BT}(2)\\ \Pi_2(\TT_2) = \{r_2\}}}
\sum_{\substack{{\bf n} \in \mathfrak{N}(\TT_2)\\n_r = n }} 
\ind_{A_1}
\frac{\jb{n_r}^{2s}}{|\phi_1|} \prod_{a \in \TT^\infty_2} v_{n_a} 
\notag  \\
& \les
\| v\|_{H^\s} \Bigg\{ \sum_{n\in \Z} \Bigg(
\sum_{\substack{\TT_2 \in \mathfrak{BT}(2)\\ \Pi_2(\TT_2) = \{r_2\}}}
\sum_{\substack{{\bf n} \in \mathfrak{N}(\TT_2)\\ n_r = n}} 
\ind_{A_1}
\frac{\jb{n}^{2s-\s}}{|\phi_1|} \prod_{a \in \TT^\infty_2 \setminus \{r_2\} } v_{n_a} \Bigg)^2 \Bigg\}^{\frac12} 
\notag  \\
& \les \|v\|_{H^\s} 
\sup_{\substack{\TT_2 \in \mathfrak{BT}(2)\\ \Pi_2(\TT_2) = \{r_2\}}}
\bigg( \sup_{n\in \Z} \sum_{\substack{{\bf n} \in \mathfrak{N}(\TT_2)\\ n_r =n}} 
\ind_{A_1}
\frac{(n^{(1)}_{\max})^{4s-6\s} (n^{(2)}_{\max})^{-6\s}}{|\phi_1|^2} \bigg)^\frac{1}{2} \notag \\
& \qquad  \qquad \times 
\bigg(  
\sum_{n\in \Z}
\sum_{\substack{{\bf n} \in \mathfrak{N}(\TT_2)\\n_r = n}} 
\prod_{a \in \TT^\infty_2\setminus \{r_2\}} \jb{n_a}^{2\s}|v_{n_a}|^2
\bigg)^\frac{1}{2}\notag  \\
& \les \|v\|_{H^\s}^6.
\end{align*}

\smallskip

\noi
$\bullet$ {\bf Case 2:}
Next, we  consider the case  $\Pi_2(\TT_2) \ne \{r_2\}$.
In this case, we need to modify the argument above since
the frequency $n_r = n$ does not correspond to a terminal node.
Noting that $\TT_2^\infty = \Pi_1(\TT_2)^\infty \cup \Pi_2(\TT_2)^\infty$, 
we have 
\begin{align}
\sum_{\substack{{\bf n} \in \mathfrak{N}(\TT_2)\\{  n}_r = n}} 
\prod_{a \in \TT^\infty_2} |v_{n_a}|^2
= \prod_{j = 1}^2 \bigg(\sum_{\substack{{\bf n} \in \mathfrak{N}(\Pi_j(\TT_2))\\{  n}_{r_j} = n}} 
\prod_{a_j \in \Pi_j(\TT_2)^\infty} |v_{n_{a_j}}|^2
\bigg).
\label{YN4}
\end{align}

\noi
Then, from \eqref{TT3} and \eqref{YN4}, 
we have
\begin{align*}
 |   \Ns^{(2)}_1(v)|
& \les 
\sum_{n\in \Z}
\sum_{\substack{\TT_2 \in \mathfrak{BT}(2)\\ \Pi_2(\TT_2) \ne \{r_2\}}}
\sum_{\substack{{\bf n} \in \mathfrak{N}(\TT_2)\\ n_r = n}} 
\ind_{A_1}
\frac{\jb{n_r}^{2s}}{|\phi_1|} \prod_{a \in \TT^\infty_2} v_{n_a} 
\notag  \\
& \les 
\sup_{\substack{\TT_2 \in \mathfrak{BT}(2)\\ \Pi_2(\TT_2) \ne \{r_2\}}}
\sum_{n \in \Z}
\bigg(  \sum_{\substack{{\bf n} \in \mathfrak{N}(\TT_2)}} 
\ind_{A_1}
\frac{(n^{(1)}_{\max})^{4s-6\s} (n^{(2)}_{\max})^{-6\s}}{|\phi_1|^2}\, \bigg)^\frac{1}{2}\\
& \hphantom{XXXXXXX}
\times 
\bigg( \sum_{\substack{{\bf n} \in \mathfrak{N}(\TT_2)\\ n_r = n}}
\prod_{a \in \TT^\infty_2} \jb{n_a}^{2\s}|v_{n_a}|^2
\bigg)^\frac{1}{2}\notag  \\
& \les 
\sup_{\substack{\TT_2 \in \mathfrak{BT}(2)\\ \Pi_2(\TT_2) \ne \{r_2\}}}
\sum_{n \in \Z}
\bigg( \sum_{\substack{{\bf n} \in \mathfrak{N}(\TT_2)\\n_r = n}}
\prod_{a \in \TT^\infty_2} \jb{n_a}^{2\s}|v_{n_a}|^2
\bigg)^\frac{1}{2}\notag  \\
& \les 
\sup_{\substack{\TT_2 \in \mathfrak{BT}(2)\\ \Pi_2(\TT_2) \ne \{r_2\}}}
\sum_{n\in \Z} \, \prod_{j = 1}^2  \bigg( 
\sum_{\substack{{\bf n} \in \mathfrak{N}(\Pi_j(\TT_2))\\ n_{r_j} = n}} 
\prod_{a_j \in \Pi_j(\TT_2)^\infty} \jb{n_{a_j}}^{2\s}|v_{n_{a_j}}|^2 \bigg) ^\frac{1}{2}\notag \\
& \les 
\sup_{\substack{\TT_2 \in \mathfrak{BT}(2)\\ \Pi_2(\TT_2) \ne \{r_2\}}}
\prod_{j = 1}^2   \bigg( 
\sum_{n\in \Z}  \sum_{\substack{{\bf n} \in \mathfrak{N}(\Pi_j(\TT_2))\\n_{r_j} = n}} 
\prod_{a_j \in \Pi_j(\TT_2)^\infty} \jb{n_{a_j}}^{2\s}|v_{n_{a_j}}|^2 \bigg) ^\frac{1}{2}\notag \\
& \les 
\|v\|_{H^\s}^6.
\end{align*}

\noi
This completes the proof of Lemma \ref{LEM:N^2_1}.
\end{proof}

Before moving onto the next subsection, 
let us briefly describe 
how to handle 
the highly non-resonant part
  $\Ns^{(2)}_2$ in \eqref{N^2_1}.
On the support of $\Ns^{(2)}_2$, i.e.~on $A_1^c$, we have 
\begin{equation} 
|\phi_1 + \phi_2|  \gg 6^3 |\phi_1|
\label{C1^c}
\end{equation}

\noi
Namely, the phase function $\phi_1 + \phi_2$ is ``large'' in this case
and hence we can exploit this fast oscillation
by applying the second step of the normal form reduction:
\begin{align*}
\Ns^{(2)}_2 (v)
& = \dt \bigg[\sum_{\TT_2 \in \mathfrak{BT}(2)}
\sum_{{\bf n} \in \mathfrak{N}(\TT_2)}
\ind_{A_1^c} \frac{\jb{n_r}^{2s}e^{- i( \phi_1 + \phi_2 )t } }{\phi_1(\phi_1+\phi_2)}
\prod_{a \in \TT^\infty_2} v_{n_{a}} \bigg]\notag \\
& \hphantom{X} -  
\sum_{\TT_2 \in \mathfrak{BT}(2)}
\sum_{b \in\TT^\infty_2} 
\sum_{{\bf n} \in \mathfrak{N}(\TT_2)}
\ind_{A_1^c}\frac{\jb{n_r}^{2s}e^{- i( \phi_1 + \phi_2)t } }{\phi_1(\phi_1+\phi_2)}
\, \RR(v)_{n_b}
\prod_{a \in \TT^\infty_2 \setminus \{b\}} v_{n_{a}} \notag \\
& \hphantom{X} 
- \sum_{\TT_3 \in \mathfrak{BT}(3)}\sum_{{\bf n} \in \mathfrak{N}(\TT_3)}
\ind_{A_1^c}\frac{\jb{n_r}^{2s}e^{- i( \phi_1 + \phi_2 +\phi_3)t } }{\phi_1(\phi_1+\phi_2)}
\,\prod_{a \in \TT^\infty_3} v_{n_{a}} \notag\\
& =: \dt \Ns^{(3)}_0(v) + \Rs^{(3)}(v) + \Ns^{(3)}(v).
\end{align*}

\noi
Using \eqref{C1^c}, 
we can estimate 
the first two terms  $\Ns^{(3)}_0$ and $ \Rs^{(3)}$ on the right-hand side 
 in a straightforward manner.
See Lemmas \ref{LEM:N^J+1_0} and \ref{LEM:N^J+1_r} below.
As for the last term $\Ns^{(3)}$,  
we split it as 
$\Ns^{(3)} = \Ns^{(3)}_1 + \Ns^{(3)}_2$, 
where $\Ns^{(3)}_1$ and  $\Ns^{(3)}_2$
are  the restrictions
onto $A_2$ and its complement $A_2^c$, respectively.
By exploiting the frequency restriction on $A_1^c \cap A_2$,
we can  estimate the first term  $\Ns^{(3)}_1$ (see Lemma \ref{LEM:N^J+1_1} below).
As for the second term $\Ns^{(3)}_2$, 
we apply the third step of the normal form reductions.
In this way, we iterate  normal form reductions
in an indefinite manner.

\subsection{General step}\label{SUBSEC:NF2}

After the $J$th step, we have
\begin{align} \label{N^J+1}
\Ns^{(J)}_2 (v)
& = \dt \bigg[ 
\sum_{\TT_J \in \mathfrak{BT}(J)}
\sum_{{\bf n} \in \mathfrak{N}(\TT_J)}
\ind_{\bigcap_{j = 1}^{J-1} A_j^c}
\frac{\jb{n_r}^{2s}e^{- i \wt{\phi}_Jt } }{\prod_{j = 1}^J \wt{\phi}_j}
\, \prod_{a \in \TT^\infty_J} v_{n_{a}}
\bigg]\notag \\
& \hphantom{X} 
- \sum_{\TT_{J} \in \mathfrak{BT}(J)}
\sum_{b \in\TT^\infty_J} 
\sum_{{\bf n} \in \mathfrak{N}(\TT_J)}
\ind_{\bigcap_{j = 1}^{J-1} A_j^c}
\frac{\jb{n_r}^{2s}e^{- i \wt{\phi}_Jt } }{\prod_{j = 1}^J \wt{\phi}_j}
\, \RR(v)_{n_b}
\prod_{a \in \TT^\infty_J \setminus \{b\}} v_{n_{a}}  \notag \\
& \hphantom{X} 
- \sum_{\TT_{J+1} \in \mathfrak{BT}(J+1)}
\sum_{{\bf n} \in \mathfrak{N}(\TT_{J+1})}
\ind_{\bigcap_{j = 1}^{J-1} A_j^c}
\frac{\jb{n_r}^{2s}e^{- i \wt{\phi}_{J+1}t } }{\prod_{j = 1}^J \wt{\phi}_j}
\,\prod_{a \in \TT^\infty_{J+1}} v_{n_{a}} \notag\\
& =: \dt \Ns^{(J+1)}_0 (v)+ \Rs^{(J+1)}(v) + \Ns^{(J+1)}(v).
\end{align}

\noi
On $\bigcap_{j = 1}^{J-1} A_j^c$, we have 
 $|\phi_1|\geq 1$ and 
\begin{equation} \label{muj}
|\wt{\phi}_j|  \gg 
(2j+2)^{3}\max \big( |\wt{\phi}_{j-1}|, 
|\phi_1|\big) \geq
(2j+2)^{3}
\end{equation}

\noi
for $j = 2, \dots, J.$
As in \cite{GKO, OST, OW1}, 
we control
  the rapidly growing
cardinality
$c_J = |\mathfrak{BT}(J)| $  defined in \eqref{cj1}
by the growing
constant $(2j+2)^3$ appearing in \eqref{muj}.

First, we estimate $\Ns^{(J+1)}_0$ and  $\Rs^{(J+1)}$.

\begin{lemma}\label{LEM:N^J+1_0}	
Let $\textup{\Ns}_0^{(J+1)}$ be as in \eqref{N^J+1}. Then, for any $s > \frac{1}{6}$, 
we have 
\begin{align} 
| \Ns_0^{(J+1)}(v)|  \les 
\frac{1}{\prod_{j = 2}^J(2j+2)^{\frac 13 }}
\|v\|_{H^\s}^{2J+2}. 
\label{YN5}
\end{align}

\noi	
 Here, the implicit constant is independent of $J$. 
\end{lemma}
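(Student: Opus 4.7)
The plan is to iterate the Cauchy--Schwarz / sum-counting argument of Lemmas~\ref{LEM:N0} and~\ref{LEM:N^2_1} across all $J$ generations of the normal form tree, systematically converting the non-resonance conditions $A_j^c$ into negative powers of $(2j+2)$ that overwhelm the rapidly growing cardinality $c_J=|\mathfrak{BT}(J)|=\prod_{j=2}^J(2j)$ from~\eqref{cj1}.

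First, I mirror the Cauchy--Schwarz step from the proof of Lemma~\ref{LEM:N^2_1}: split into the cases $\Pi_2(\TT_J)=\{r_2\}$ and $\Pi_2(\TT_J)\ne\{r_2\}$, with the factorisation~\eqref{YN4} handling the bi-tree in the latter. Distributing the weight $\jb{n_r}^{2s}$ as $\jb{n_r}^{2s-\s}\cdot\jb{n_r}^{\s}$ and pairing each Fourier coefficient $v_{n_a}$ with a factor $\jb{n_a}^{\s}$, two applications of Cauchy--Schwarz extract $\|v\|_{H^\s}^{2J+2}$ and reduce matters to proving
\begin{align*}
\sum_{\TT_J\in\mathfrak{BT}(J)}M(\TT_J)^{1/2}\les\prod_{j=2}^J(2j+2)^{-1/3},
\end{align*}
where
\begin{align*}
M(\TT_J):=\sup_{n\in\Z}\sum_{\substack{\bn\in\mathfrak{N}(\TT_J)\\ n_r=n}}\ind_{\bigcap_{j=1}^{J-1}A_j^c}\frac{(n^{(1)}_\text{max})^{4s-6\s}\prod_{j=2}^J(n^{(j)}_\text{max})^{-6\s}}{\prod_{j=1}^J|\wt\phi_j|^2}.
\end{align*}

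The heart of the argument is extracting a large $(2j+2)$-power gain from the non-resonance condition~\eqref{muj}. Since on $\bigcap_{j=1}^{J-1}A_j^c$ one has $|\wt\phi_j|\gg(2j+2)^3|\wt\phi_{j-1}|$ for $j\ge 2$, the triangle inequality gives $|\phi_j|\sim|\wt\phi_j|$. For a parameter $\eta>0$ chosen slightly above $\tfrac 49$ but still $<\tfrac 12$, I use $|\wt\phi_j|^2\ge(2j+2)^{6\eta}|\wt\phi_j|^{2-2\eta}$ to extract $\prod_{j=2}^J(2j+2)^{-6\eta}$, leaving a reduced sum whose denominator is $|\phi_1|^2\prod_{j=2}^J|\phi_j|^{2-2\eta}$. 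The latter is then estimated generation by generation from the deepest level inward: using $|\phi_j|\sim(n^{(j)}_\text{max})^2|\mu_j|$ together with the splitting $|\mu_j|^{2-2\eta}=|\mu_j|^\alpha|\mu_j|^{2-2\eta-\alpha}$ with $\alpha=\tfrac 65$, exactly as in Lemma~\ref{LEM:N^2_1}, the inner sum at each generation is bounded uniformly in the parent frequency determined by the earlier generations; closing the iteration at $j=1$ via the argument of Lemma~\ref{LEM:N0} gives $M(\TT_J)\les\prod_{j=2}^J(2j+2)^{-6\eta}$ uniformly in the choice of $\TT_J$.

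Finally, since $c_J=\prod_{j=2}^J(2j)\le\prod_{j=2}^J(2j+2)$, summing over the $c_J$ trees in $\mathfrak{BT}(J)$ yields
\begin{align*}
\sum_{\TT_J\in\mathfrak{BT}(J)}M(\TT_J)^{1/2}\le c_J\prod_{j=2}^J(2j+2)^{-3\eta}\les\prod_{j=2}^J(2j+2)^{1-3\eta}\le\prod_{j=2}^J(2j+2)^{-1/3},
\end{align*}
which holds provided $3\eta\ge\tfrac 43$; $\eta$ slightly above $\tfrac 49$ is admissible. The main obstacle will be in verifying that the per-generation inner sum genuinely closes uniformly in $J$ and in the deeper generations: the simultaneous choice of $\alpha=\tfrac 65$ at every level together with $\eta\approx\tfrac 49$ is precisely what forces the regularity threshold $s>\tfrac 3{10}$ imposed in Proposition~\ref{PROP:energy2}. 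This combinatorial bookkeeping mirrors the infinite iteration carried out in~\cite{OW1}, with the extra root weight $(n^{(1)}_\text{max})^{4s-6\s}$---coming from the squared $H^s$-energy functional on the left-hand side of~\eqref{E8}---being the essential new feature that degrades the allowable range of $\s$ (as emphasized in Remark~\ref{REM:NF}).
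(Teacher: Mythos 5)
Your overall strategy — Cauchy--Schwarz to reduce to a weighted lattice sum, extraction of a $(2j+2)$-power gain from the non-resonance conditions $A_j^c$, and a generation-by-generation estimation of the remaining sum that overwhelms the cardinality $c_J$ — is exactly the strategy of the paper's proof. The extraction idea is also essentially right: the paper's \eqref{mujj3} corresponds to taking your $\eta=\tfrac 12$, giving $\prod_{j=1}^J|\wt\phi_j|^2 \gg |\phi_1|^2\prod_{j=2}^J(2j+2)^3|\phi_j|$; any fixed $\eta\in(\tfrac 49,\tfrac 12)$ would also close the combinatorics, though you need to track the per-generation constant $C$ (your chain $\sum_{\TT_J}M(\TT_J)^{1/2}\le c_J\prod(2j+2)^{-3\eta}$ silently drops a $C^{J/2}$, which is harmless precisely because $3\eta>\tfrac 43$ is strict, but should be stated).

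The genuine problem is in your treatment of the per-generation sum, and in the regularity threshold you extract from it. You invoke ``the splitting $|\mu_j|^{2-2\eta}=|\mu_j|^\alpha|\mu_j|^{2-2\eta-\alpha}$ with $\alpha=\tfrac 65$, exactly as in Lemma~\ref{LEM:N^2_1}.'' This is a misreading of what the $\alpha$-interpolation does in Lemma~\ref{LEM:N^2_1} (and in Lemma~\ref{LEM:N^J+1_1}): there, the constraint $A_1$ (resp.\ $A_J$) ties $|\phi_2|$ (resp.\ $|\phi_{J+1}|$) to $|\phi_1|$, and the splitting $|\phi_1|^2\gtrsim|\phi_1|^\alpha|\phi_2|^{2-\alpha}$ transfers part of the weight from $\phi_1$ to $\phi_2$ \emph{across generations}. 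The multilinear form $\Ns_0^{(J+1)}$ is supported only on $\bigcap_{j=1}^{J-1}A_j^c$, with no $A_J$-type upper bound linking a further phase factor back to $\phi_1$; the factoring you write is a trivial identity within a single generation and achieves nothing. More importantly, because of this misattribution you conclude the threshold $s>\tfrac 3{10}$ — but that threshold belongs to Lemma~\ref{LEM:N^J+1_1}, not to this boundary-term lemma, which the paper proves for $s>\tfrac 16$. The paper instead bounds the $j$th factor directly by \eqref{mujj31}: $\tfrac{(n^{(j)}_{\max})^{-6\sigma}}{|\phi_j|}\lesssim|\mu_j|^{-1-\delta}$, which only needs $6\sigma+2>0$, i.e.\ $s>\tfrac16$, and then the $(p,q)$-sum converges uniformly; the root factor \eqref{mujj32} costs nothing since $4s-6\sigma-4\le 0$. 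If you rewrite your per-generation step to drop the $\alpha$-split entirely and just convert $(n^{(j)}_{\max})^{-6\sigma}$ into a $\mu_j$-gain via $|\mu_j|\lesssim(n^{(j)}_{\max})^2$, you will in fact recover the $s>\tfrac16$ threshold, matching the paper.
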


\begin{proof}
From \eqref{Cj2}, we have 
\begin{align*} 
|\phi_j| \lesssim \max\big(|\wt{\phi}_{j-1}|, |\wt{\phi}_j|\big). 
\end{align*}

\noi
Then, in view of \eqref{muj}, we have 
\begin{align} 
\label{mujj2}
(2j)^3  |\phi_j| \ll |\wt{\phi}_{j-1}| |\wt{\phi}_j|.
\end{align}

\noi
Hence,  from \eqref{mujj2} and then  \eqref{muj},  we have
\begin{align} 
\label{mujj3}
\prod_{j=1}^{J} |\wt\phi_j|^2 
& \gg 
|\phi_1| |\wt \phi_J|
\prod_{j=2}^{J}\Big( (2j)^3  |\phi_j|\Big) 
\gg 
| \phi_1 |^2 \prod_{j=2}^{J}\Big( (2j+2)^3  |\phi_j|\Big).
\end{align}

We only discuss the case  $\Pi_2(\TT_J) = \{r_2\}$
since the modification is straightforward if $\Pi_2(\TT_J) \ne \{r_2\}$.
Given $s > \frac 16$, 
there exists small $\dl > 0$ such that 
\begin{align}
\frac{(n^{(j)}_{\max})^{-6\s}}{|\phi_j|} 
\sim \frac{(n^{(j)}_{\max})^{-6\s}}{|\mu_j| (n^{(j)}_{\max})^{2}} \les \frac{1}{|\mu_j|^{1+\dl} }   .
\label{mujj31}
\end{align}

\noi
Similarly, we  have 
\begin{align}
\frac{(n^{(1)}_{\max})^{4s-6\s}}{|\phi_1|^2} 
\sim \frac{(n^{(1)}_{\max})^{4s-6\s}}{|\mu_1|^2 (n^{(1)}_{\max})^{4}} \les \frac{1}{|\mu_1|^{2} }. 
\label{mujj32}  
\end{align}

\noi 
Then, 
from \eqref{mujj3}, \eqref{mujj31}, and \eqref{mujj32}, we have
\begin{align}
\sup_{n\in \Z}
& \sum_{\substack{{\bf n} \in \Nf(\TT_J)\\ n_r = n}}
\ind_{\bigcap_{j = 1}^{J-1} A_j^c}\cdot 
(n^{(1)}_{\max})^{4s}
\prod_{j = 1}^J \frac{  (n^{(j)}_{\max})^{-6\s}}{|\wt{\phi}_j|^2} \notag\\ 
& \ll \frac{1 }{\prod_{j = 1}^J(2j+2)^{3}}
\cdot  \sup_{n\in \Z}
\sum_{\substack{{\bf n} \in \Nf(\TT_J)\\ n_r = n\\ \phi_j \neq 0\\ j = 1, \dots, J} } 
\ind_{\bigcap_{j = 1}^{J-1} A_j^c}
\frac{ (n^{(1)}_{\max})^{4s-6\s}   }{| \phi_1 |^2 }
\prod_{j = 2}^J  \frac{(n^{(j)}_{\max})^{-6s}}{|\phi_j|} \notag \\
& \les \frac{1 } {\prod_{j = 1}^J(2j+2)^{3}}
\cdot  \sup_{n\in \Z}
\sum_{\substack{{\bf n} \in \Nf(\TT_J)\\ n_r = n\\ \mu_j \neq 0\\ j = 1, \dots, J} } 
\frac{1}{|\mu_1 |^{2}}\prod_{j = 2}^J \frac{1}{|\mu_j|^{1+\dl}} \notag \\
& \le \frac{C^J }{\prod_{j = 1}^J(2j+2)^{3}}.
\label{mujj4}
\end{align}

\noi 
Hence, by Cauchy-Schwarz inequality and 
\eqref{mujj4}, we have
\begin{align}
| \Ns^{(J+1)}_0(v)|
& \les  \|v\|_{H^\s}
\sum_{\substack{\TT_J \in \mathfrak{BT}(J)\\\Pi_2(\TT_J) = \{r_2\}}}
\Bigg\{ \sum_{n \in \Z} 
\bigg(
\sum_{\substack{{\bf n} \in \mathfrak{N}(\TT_J)\\{ n}_r = n}}
\ind_{\bigcap_{j = 1}^{J-1} A_j^c}
\frac{\jb{n^{(1)}_{\max} }^{4s-6\s}}{|\phi_1|^{2}} 
\prod_{j = 2}^J \frac{ \jb{n^{(j)}_{\max} }^{-6\s}  }{|\wt{\phi}_j|^2}
\bigg) \notag \\
& \hphantom{XXXXXX} \times 
\bigg(\sum_{\substack{{\bf n} \in \mathfrak{N}(\TT_J)\\{ n}_r = n}} 
\prod_{a \in \TT^\infty_J\setminus\{r_2\}} |v_{n_a}|^2
\bigg)\Bigg\}^\frac{1}{2}  \notag \\
& \les \frac{c_J\cdot C^\frac J2}{\prod_{j = 2}^J(2j+2)^{\frac{3}{2}}}
\|v\|_{H^\s}^{2J+2}.
\label{YN5a}
\end{align}

\noi
Then, the desired bound \eqref{YN5} follows from \eqref{cj1}.
\end{proof}    

\begin{remark}\rm
At the first inequality in \eqref{YN5a}, we needed the full power 
$\jb{n^{(j)}_{\max} }^{-6\s}$ only for those $j$'s 
such that the three terminal nodes of 
the tree  added in the $(j-1)$th step are also in $\TT_J^\infty$.
For example, $j = J$ satisfies this condition.
For other values of $j$, a smaller power may suffice.
Note, however, that 
we need to use 
\eqref{mujj31} at least for $j = J$, thus requiring the regularity
restriction $s > \frac 16$.
We therefore simply used the maximum power
$\jb{n^{(j)}_{\max} }^{-6\s}$ for all $j = 1, \dots, J$
at the first inequality in \eqref{YN5a}.
The same comments applies to Lemmas \ref{LEM:N^J+1_r}
and \ref{LEM:N^J+1_1}.
\end{remark}

\begin{lemma}
\label{LEM:N^J+1_r}
Let $\textup{\Rs}^{(J+1)}$ be as in \eqref{N^J+1}. Then, for any $ \frac{1}{6} < s \le \frac 12$, 
we have 
\begin{align} 
|\Rs^{(J+1)}(v)|  \les 
\frac{1}{\prod_{j = 2}^J(2j+2)^{\frac 13 }}
\|v\|_{H^\s}^{2J+4}.
\label{YN6}
\end{align}

\noi
Here, the implicit constant is independent of $J$.
\end{lemma}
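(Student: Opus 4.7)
The plan is to adapt the Cauchy--Schwarz plus counting argument of Lemma \ref{LEM:N^J+1_0} to absorb the two extra factors of $|v_{n_b}|$ arising from the resonant nonlinearity $\RR(v)_{n_b}=i|v_{n_b}|^2 v_{n_b}$ at the terminal node $b$, by means of the embedding $\l^2 \embeds \l^6$ at the frequency $n_b$, exactly as in the passage from Lemma \ref{LEM:N0} to Lemma \ref{LEM:R0}. Concretely, for each fixed $\TT_J\in\mathfrak{BT}(J)$ and $b\in\TT_J^\infty$, I would apply Cauchy--Schwarz in $\bn\in\mathfrak{N}(\TT_J)$ with the splitting
\[
A_\bn=\frac{(n^{(1)}_{\max})^{2s}}{\prod_{j=1}^{J}|\wt{\phi}_j|}\,\jb{n_b}^{-3\s}\prod_{a\in\TT_J^\infty\setminus\{b\}}\jb{n_a}^{-\s},\qquad B_\bn=\jb{n_b}^{3\s}|v_{n_b}|^3\prod_{a\in\TT_J^\infty\setminus\{b\}}\jb{n_a}^{\s}v_{n_a}.
\]
For the norm factor $\sum_\bn B_\bn^2$, I would factor the frequency constraint along the two sub-trees $\Pi_1(\TT_J)$ and $\Pi_2(\TT_J)$ as in the proof of Lemma \ref{LEM:N^2_1}, then invoke $\sum_{n_b}\jb{n_b}^{6\s}|v_{n_b}|^6\le\|v\|_{H^\s}^6$ (from $\l^2\embeds\l^6$) at $b$ together with $\sum_{n_a}\jb{n_a}^{2\s}|v_{n_a}|^2=\|v\|_{H^\s}^2$ at each of the other $2J+1$ terminal nodes. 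Taking the square root yields precisely the advertised power $\|v\|_{H^\s}^{2J+4}$.

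The counting factor $\sum_\bn A_\bn^2$ matches the one bounded in \eqref{mujj4} up to the extra weight $\jb{n_b}^{-4\s}\les(n^{(j_b)}_{\max})^{-4\s}$, where $j_b$ denotes the generation at which $b$ was introduced. I would absorb this into the per-generation dispersive estimate: if $j_b=1$, the weight is incorporated into \eqref{mujj32} to produce $(n^{(1)}_{\max})^{4s-10\s}/|\phi_1|^2\les 1/|\mu_1|^2$ under the stated regularity assumption; if $j_b>1$, it is incorporated into \eqref{mujj31} and controlled by combining the dispersive factor $|\phi_{j_b}|$ with a portion of the surplus smoothing available in \eqref{mujj32} through the chain inequality \eqref{mujj3}. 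Once the per-generation bound $1/|\mu_j|^{1+\dl}$ is restored, the remainder of the argument---summing over $\bn$ to extract $\prod_j|\mu_j|^{-1-\dl}$, summing over $\TT_J\in\mathfrak{BT}(J)$ to produce the factorial $c_J=2^{J-1}J!$, and summing over $b\in\TT_J^\infty$ to pick up a factor of $|\TT_J^\infty|=2J+2$---proceeds verbatim as in the derivation of \eqref{YN5a}, with the super-polynomial denominator $\prod_{j=2}^{J}(2j+2)^{3}$ provided by \eqref{mujj3} dominating both $c_J$ and $2J+2$ and leaving the target bound \eqref{YN6}.

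The main obstacle is the case $j_b>1$: the naive estimate that charges the entire $\jb{n_b}^{-4\s}$ deficit to the single factor $|\phi_{j_b}|$ would tighten the threshold well beyond $s>\tfrac{1}{6}$, so to retain the stated regularity one must carefully spread this deficit across multiple generations using the full strength of \eqref{mujj2}--\eqref{mujj3}, and this accounting must be carried out uniformly over the $2J+2$ possible positions of $b$ within $\TT_J^\infty$ and uniformly in $J$.
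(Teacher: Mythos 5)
Your overall framework is correct: Cauchy--Schwarz, the $\l^2\embeds\l^6$ embedding at the resonant node $b$, the factorization along $\Pi_1(\TT_J)$ and $\Pi_2(\TT_J)$, and the power count $\|v\|_{H^\s}^{2J+4}$ all match the paper. Your computation for $j_b=1$ (absorbing the extra weight into \eqref{mujj32}, giving the threshold $s>\tfrac16$) is also right. But the part you flag as ``the main obstacle'' --- the case $j_b>1$, where naively charging $\jb{n_b}^{-4\s}$ to $|\phi_{j_b}|$ forces $s>\tfrac{3}{10}$ --- is exactly where your proposal stops short, and it is the crux of the lemma. Saying one must ``carefully spread this deficit across multiple generations using the full strength of \eqref{mujj2}--\eqref{mujj3}'' identifies the problem but does not solve it.

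The paper's actual resolution is different and concrete, and does not case-split on $j_b$ at all. It instead splits according to whether $|\wt{\phi}_J|\ges|\phi_J|$ or $|\wt{\phi}_J|\ll|\phi_J|$. The key structural fact on $\bigcap_{j=1}^{J-1}A_j^c$ is that the accumulated phase $\wt{\phi}_J$ dominates: $|\wt{\phi}_J|\gg\max(|\wt{\phi}_{J-1}|,|\phi_1|)$, hence $|\wt{\phi}_J|\ges|\phi_j|$ for every $j\le J$ in particular. Exploiting this, the paper leaves the factor $|\wt{\phi}_J|$ undecomposed in the first inequality of \eqref{mujj3} and splits its surplus as
\[
|\wt{\phi}_J|=|\wt{\phi}_J|^{-2\s}\,|\wt{\phi}_J|^{1+2\s}\gg|\phi_J|^{-2\s}\,|\phi_1|^{1+2\s},
\]
using $\s\le 0$ and $1+2\s\ge 0$ (so $s>0$). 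Combining $|\phi_J|^{-2\s}$ with \eqref{MU2} gives precisely the factor $(n^{(J)}_{\max})^{-4\s}$ needed, while the leftover $|\phi_1|^{2+2\s}$ still pairs with the weight at generation one via \eqref{R3}. The separate case $|\wt{\phi}_J|\ll|\phi_J|$ (which can only occur when $b$ was created at the last generation) is handled by observing $|\phi_J|\sim|\wt{\phi}_{J-1}|$ and reshuffling the chain inequality to put a full $|\phi_J|^2$ at the last generation, via \eqref{R5}--\eqref{R7}. This phase-based bookkeeping is what preserves the threshold $s>\tfrac16$; your generation-based accounting, as described, does not reach it for $j_b>1$ and the proposal as written is therefore incomplete at its central step.
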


\begin{proof}
We consider two cases: 
(i)  $| \wt{\phi}_J | \ges | \phi_J | $ and (ii) $| \wt{\phi}_J| \ll | \phi_J | $.

\smallskip

\noi
$\bullet$ {\bf Case 1:}
 $| \wt{\phi}_J | \ges | \phi_J | $. 
\quad 
From \eqref{MU2}, we have 
\begin{align}
(  n_{\max}^{(J)} )^{-4\s}\les | \phi_J |^{-2\s}
\label{R1}
\end{align}

\noi 
for $\s \le 0$, namely, $ s\le \frac 12$.
From \eqref{muj}, we  have 
\begin{align}  
| \wt{\phi}_J |=| \wt{\phi}_J |^{-2\s} | \wt{\phi}_J |^{1+2\s}
\gg | \wt{\phi}_J |^{-2\s} | \phi_1 |^{1+2\s}\ges | \phi_J|^{-2\s}| \phi_1 |^{1+2\s},
\label{R2}
\end{align} 

\noi 
provided that 
 $1+2\s\ge 0$, namely,  $s>0$.
We also observe that
\begin{align} 
\frac{ ( n_{\max}^{(1)} )^{4s-6\s}}{ | \phi_1 |^{2+2\s }}
\sim \frac{1}{ | \mu_1 |^{2+2\s}( n_{\max}^{(1)} )^{4-4s +10\s} }.
\label{R3} 
\end{align}

\noi 
Note that 
$4 - 4s +10\s>0$ and $2+2\s>1$, provided that $s>\frac 16$. 
Then,  by applying \eqref{mujj3} and \eqref{R2}
followed by  \eqref{mujj31}, \eqref{R1},  and \eqref{R3}, we have
\begin{align}
\sup_{n\in \Z}& 
\sum_{\substack{{\bf n} \in \Nf(\TT_J)\\ n_r = n}}
\ind_{\bigcap_{j = 1}^{J-1} A_j^c}\cdot 
(n_{\max}^{(J)})^{-4\s}
(n^{(1)}_{\max})^{4s}
\prod_{j = 1}^J \frac{  (n^{(j)}_{\max})^{-6\s}}{|\wt{\phi}_j|^2}\notag\\ 
& \ll \frac{1 }{\prod_{j = 2}^J(2j)^{3}}
\cdot  \sup_{n\in \Z}
\sum_{\substack{{\bf n} \in \Nf(\TT_J)\\ n_r = n\\ \phi_j \neq 0\\ j = 1, \dots, J} } 
\ind_{\bigcap_{j = 1}^{J-1} A_j^c} 
\frac{ (n_{\max}^{(J)} )^{-4\s} }{| \wt{\phi}_J | } \frac{(n_{\max}^{(1)})^{4s-6\s} }{| \phi_1 |} \prod_{j = 2}^J  \frac{(n^{(j)}_{\max})^{-6\s}}{|\phi_j|} \notag \\
& \ll \frac{1 }{\prod_{j = 2}^J(2j)^{3}}
\cdot  \sup_{n\in \Z}
\sum_{\substack{{\bf n} \in \Nf(\TT_J)\\ n_r = n\\ \phi_j \neq 0\\ j = 1, \dots, J} } 
\frac{ (n_{\max}^{(J)} )^{-4\s} }{| \phi_J |^{-2\s} } \frac{(n_{\max}^{(1)})^{4s-6\s} }{| \phi_1 |^{2+2\s}} \prod_{j = 2}^J  \frac{(n^{(j)}_{\max})^{-6\s}}{|\phi_j|} \notag \\
& \les \frac{C^J }{\prod_{j = 2}^J(2j)^{3}}.
\label{R4}
\end{align}

\noi 
Hence,   proceeding as in 
\eqref{YN5a}
with \eqref{R4} and \eqref{cj1}, 
we obtain 
\eqref{YN6} in this case.

\smallskip

\noi
$\bullet$ {\bf Case 2:}
 $| \wt{\phi}_J | \ll | \phi_J |$. 
 \quad In this case, we have $| \phi_J | \sim |  \wt{\phi}_{J-1} | $. 
 From \eqref{mujj2}, we have
\begin{align}
\prod_{j=1}^J| \wt{\phi}_j |^2 
\gg      | \phi_1 | | \wt{\phi}_{J-1} | | \wt{\phi}_J |^2 
 \prod_{j=2}^{J-1} \Big( (2j)^3 | \phi_j | \Big).
\label{YN7}
\end{align}

\noi
From \eqref{muj}, we also have 
\begin{align}
\begin{split}
|\wt \phi_J| & \gg (2J+2)^3 |\wt \phi_{J-1}| \sim (2J+2)^3 |\phi_J|,\\
| \wt{\phi}_{J-1} | & \gg (2J)^3 | \phi_1 | .
\end{split}
\label{YN8} 
\end{align}

\noi
Thus, from \eqref{YN7} and \eqref{YN8}, we have 
\begin{align} 
\prod_{j=1}^J | \wt{\phi}_j |^2 
\gg  | \phi_1 |  | \phi_J | \prod_{j=1}^J \Big( (2j+2)^3 | \phi_j |\Big).
\label{R5}
\end{align}

Note that 
\begin{align}
\frac{ ( n_{\max}^{(J)} )^{-10\s }    }{ | \phi_{J}|^{2} }\sim \frac{( n_{\max}^{(J)} ) ^{-10\s} }{ | \mu_{J} |^{2} ( n_{\max}^{(J)} )^{4} }\le\frac{1 }{ | \mu_{J} |^{2}}, 
\label{R6}
\end{align}

\noi 
provided that  $4+10\s>0$, namely, 
 $s>\frac {1}{10}$. 
Then, from 
\eqref{R5}, \eqref{mujj31}, \eqref{mujj32}, and \eqref{R6}, we have 
\begin{align}
\sup_{n\in \Z} & 
\sum_{\substack{{\bf n} \in \Nf(\TT_J)\\ n_r = n}}
\ind_{\bigcap_{j = 1}^{J-1} A_j^c}\cdot 
(n_{\max}^{(J)})^{-4\s}
(n^{(1)}_{\max})^{4s}
\prod_{j = 1}^J \frac{  (n^{(j)}_{\max})^{-6\s}}{|\wt{\phi}_j|^2}\notag\\ 
& \ll \frac{1 }{\prod_{j = 1}^J(2j+2)^{3}}
\cdot  \sup_{n\in \Z}
\sum_{\substack{{\bf n} \in \Nf(\TT_J)\\ n_r = n\\ \phi_j \neq 0\\ j = 1, \dots, J} } 
\frac{ (n_{\max}^{(J)} )^{-10\s} }{| \phi_J |^2 } \frac{(n_{\max}^{(1)})^{4s-6\s} }{| \phi_1 |^2} \prod_{j = 2}^{J-1}  \frac{(n^{(j)}_{\max})^{-6\s}}{|\phi_j|} \notag \\
& \les \frac{C^J }{\prod_{j = 1}^J(2j+2)^{3}}.
\label{R7}
\end{align}

\noi 
Hence, 
   proceeding as in 
\eqref{YN5a}
with \eqref{R7} and \eqref{cj1}, 
we obtain 
\eqref{YN6} in this case.
\end{proof}

Finally, we consider  $\Ns^{(J+1)}$.
As before, we write
\begin{equation} \label{N^J+1_1}
\Ns^{(J+1)} = \Ns^{(J+1)}_1 + \Ns^{(J+1)}_2,
\end{equation}

\noi
where $\Ns^{(J+1)}_1$ is the restriction of $\Ns^{(J+1)}$ onto $A_J$ defined in \eqref{Cj}
and
$\Ns^{(J+1)}_2 := \Ns^{(J+1)} - \Ns^{(J+1)}_1$.
In the following lemma, we estimate the first term $\Ns^{(J+1)}_1$.
Then,  we apply a normal form reduction 
once again to the second term $\Ns^{(J+1)}_2$ 
as in \eqref{N^J+1}
and repeat this process indefinitely.
Lemma \ref{LEM:N^J+1_2} below shows 
that, for a smooth function $v$,  this error term 
$\Ns^{(J+1)}_2$ tends to 0 as $J \to \infty$.

\begin{lemma}
\label{LEM:N^J+1_1}
Let $\textup{\Ns}_1^{(J+1)}$ be as in \eqref{N^J+1}. Then, for any $s > \frac 3{10}$, 
we have 
\begin{align} 
| \textup{\Ns}_1^{(J+1)}(v)|  \les 
\frac{1 }{\prod_{j = 2}^J(2j+2)^{\frac 13 }}
\|v\|_{H^\s}^{2J+4}. 
\label{YN8a}
\end{align}

\noi 
Here, the implicit constant is independent of $J$.
\end{lemma}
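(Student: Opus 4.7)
The plan is to combine the strategy from Lemma~\ref{LEM:N^J+1_0} (which handles the bulk of the generations via \eqref{mujj3}) with the strategy from Lemma~\ref{LEM:N^2_1} (which exploits an $A_j$-condition to pair up two consecutive generations). Specifically, I would use \eqref{mujj3} only up to generation $J-1$ and reserve the $|\wt\phi_J|^2$ factor to control both the $J$th and $(J+1)$th generation frequencies, in analogy with the role played by $|\phi_1|^2$ in the proof of Lemma~\ref{LEM:N^2_1}.

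\textbf{Step 1 (Cauchy--Schwarz reduction).} As in \eqref{YN5a}, I apply Cauchy--Schwarz in the root frequency $n_r$, pulling out all factors $|v_{n_a}|$ for $a\in\TT_{J+1}^\infty\setminus\{r_2\}$; the case $\Pi_2(\TT_{J+1})\ne\{r_2\}$ is handled by the factorization \eqref{YN4} used in Case~2 of Lemma~\ref{LEM:N^2_1}. Together with the cardinality bound \eqref{cj1}, the estimate \eqref{YN8a} reduces to showing
\begin{align*}
\sup_{n\in\Z}\sum_{\substack{\mathbf{n}\in\Nf(\TT_{J+1})\\ n_r=n}}\ind_{\bigcap_{j=1}^{J-1}A_j^c\cap A_J}\,
\frac{(n_{\max}^{(1)})^{4s-6\s}\prod_{j=2}^{J+1}(n_{\max}^{(j)})^{-6\s}}{\prod_{j=1}^J|\wt\phi_j|^2}
\ \lesssim\ \frac{C^J}{\prod_{j=2}^J(2j+2)^{3}},
\end{align*}
uniformly in $\TT_{J+1}\in\mathfrak{BT}(J+1)$.

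\textbf{Step 2 (Generations $1$ through $J-1$).} Following the argument of Lemma~\ref{LEM:N^J+1_0} but stopping one generation earlier, iterate \eqref{mujj2} up to $j=J-1$ to obtain
$\prod_{j=1}^{J-1}|\wt\phi_j|^2\gg |\phi_1|^2\prod_{j=2}^{J-1}(2j+2)^{3}|\phi_j|$,
then use \eqref{mujj31}--\eqref{mujj32} to dominate the contribution of generations $1,\dots,J-1$ by
$\bigl(\prod_{j=2}^{J-1}(2j+2)^{3}\bigr)^{-1}\cdot |\mu_1|^{-2}\prod_{j=2}^{J-1}|\mu_j|^{-1-\dl}$.
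This consumes the phase factors $|\wt\phi_1|^2,\dots,|\wt\phi_{J-1}|^2$ but leaves the full $|\wt\phi_J|^2$ available, which is the key novelty compared with Lemma~\ref{LEM:N^J+1_0}. The restriction here is only $s>\tfrac16$.

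\textbf{Step 3 (Generations $J$ and $J+1$ via $A_J$).} The remaining factor to control is
\begin{align*}
\frac{(n_{\max}^{(J)})^{-6\s}(n_{\max}^{(J+1)})^{-6\s}}{|\wt\phi_J|^2}.
\end{align*}
On $\bigcap_{j=1}^{J-1}A_j^c$, the condition \eqref{muj} gives $|\wt\phi_J|\gg(2J+2)^3|\phi_1|$, so $\max(|\wt\phi_J|,|\phi_1|)=|\wt\phi_J|$; combined with the triangle inequality and the defining condition of $A_J$, this yields $|\phi_{J+1}|\le|\wt\phi_{J+1}|+|\wt\phi_J|\lesssim(2J+4)^{3}|\wt\phi_J|$. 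This is precisely the analogue of the key estimate $|\phi_2|\lesssim|\phi_1|$ used in the proof of Lemma~\ref{LEM:N^2_1}, with $|\wt\phi_J|$ playing the role of $|\phi_1|$ and a polynomial loss $(2J+4)^{3}$. I then split $|\wt\phi_J|^2=|\wt\phi_J|^{2-\al}\cdot|\wt\phi_J|^{\al}$ for some $\al\in[1,2]$ and bound $|\wt\phi_J|^{2-\al}\gtrsim|\phi_J|^{2-\al}$, $|\wt\phi_J|^{\al}\gtrsim(2J+4)^{-3\al}|\phi_{J+1}|^{\al}$. Using $|\phi_j|\sim|\mu_j|(n_{\max}^{(j)})^2$ and then $(n_{\max}^{(j)})^2\gtrsim|\mu_j|$ exactly as in the proof of Lemma~\ref{LEM:N^2_1}, I convert this into $(2J+4)^{3\al}|\mu_J|^{-(1+\dl)}|\mu_{J+1}|^{-(1+\dl)}$, provided the three conditions $s>\tfrac32-\al$, $s>\tfrac\al3-\tfrac16$, $s>\tfrac{2\al}3-\tfrac12$ hold. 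Choosing $\al=\tfrac65$ is optimal and yields exactly $s>\tfrac3{10}$.

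\textbf{Step 4 (Summation and absorption of constants).} Summing $\sum_{\mu_j\ne 0}|\mu_j|^{-(1+\dl)}\lesssim 1$ for each $j\in\{1,\dots,J+1\}$ and combining with Steps~2 and~3 yields a counting bound of the order $C^{J}\cdot(2J+4)^{18/5}\cdot\prod_{j=2}^{J-1}(2j+2)^{-3}$. The polynomial loss $(2J+4)^{18/5}$ is readily absorbed into the super-exponential decay $\prod_{j=2}^{J}(2j+2)^{-3}$, and after accounting for the cardinality $c_{J+1}\sim 2^J(J+1)!$ from Step~1, one recovers the claimed decay $\prod_{j=2}^J(2j+2)^{-1/3}$.

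\textbf{Main obstacle.} The main difficulty is Step~3: the phase factor $|\wt\phi_J|^2$ in the denominator is insufficient on its own to dominate both $(n_{\max}^{(J)})^{-6\s}$ and $(n_{\max}^{(J+1)})^{-6\s}$, and neither naive bound ($\al=1$, forcing a non-summable $|\mu_J|^1$ or $|\mu_{J+1}|^1$ factor, nor $\al=2$, losing control of the $J$th generation) works. Only the fractional split $\al=\tfrac65$ together with the sharp exchange $(n_{\max}^{(j)})^2\gtrsim|\mu_j|$ closes the estimate, and this is precisely what pins down the regularity threshold $s>\tfrac3{10}$ in the main theorem.
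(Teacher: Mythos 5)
Your decomposition is genuinely different from the paper's, and the overall structure is sound. The paper uses the first form of \eqref{mujj3}, which retains the separate factor $|\phi_1|\,|\wt\phi_J|$, and then splits $J^3|\wt\phi_J|$ as $(J^3|\wt\phi_J|)^{1-\al}(J^3|\wt\phi_J|)^{\al}\gtrsim |\phi_1|^{1-\al}|\phi_{J+1}|^{\al}$ for $0\le\al\le 1$; thus generation~$1$ (which carries the heavy weight $(n_{\max}^{(1)})^{4s-6\s}$) receives only $|\phi_1|^{2-\al}$, forcing the condition $-4s+6\s-2\al+4>0$, i.e.\ $s>\al-\tfrac12$. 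You instead reserve the entire $|\wt\phi_J|^2$ for generations $J,J+1$ and give generation~$1$ the full $|\phi_1|^2$; this is legitimate since $|\wt\phi_j|^2\gg(2j+2)^3|\phi_j|$ for $2\le j\le J-1$ and, on $A_{J-1}^c\cap A_J$, both $|\wt\phi_J|\gtrsim|\phi_J|$ and $|\wt\phi_J|\gtrsim(2J+4)^{-3}|\phi_{J+1}|$ hold. The constant bookkeeping in Step~4 also closes since the decay is still super-exponential.

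The one genuine slip is in Step~3: the three conditions you list are copied from Lemma~\ref{LEM:N^2_1}, but they correspond to a different weight configuration. In Lemma~\ref{LEM:N^2_1}, the factor carrying the exponent $\al$ is the root generation with weight $(n_{\max}^{(1)})^{4s-6\s}$, which is what produces $-4s+6\s+2\al>0$, i.e.\ $s>\tfrac32-\al$. In your Step~3, \emph{both} generations $J$ and $J+1$ carry only the weight $(n_{\max}^{(j)})^{-6\s}$, so the relevant positivity and $\mu$-boost conditions are $s>\tfrac\al3-\tfrac16$, $s>\tfrac{2\al}3-\tfrac12$ from $|\phi_J|^{2-\al}$, and $s>\tfrac12-\tfrac\al3$, $s>\tfrac56-\tfrac{2\al}3$ from $|\phi_{J+1}|^{\al}$; the constraint $s>\tfrac32-\al$ does not appear. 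Optimizing the \emph{correct} constraints permits $\al=1$ and gives $s>\tfrac16$, i.e.\ your decomposition actually proves a sharper version of this lemma than the paper does (though this does not improve Theorem~\ref{THM:1}, since Lemma~\ref{LEM:N^2_1}, which is the $J=1$ case where only $\phi_1$ is available to split, remains the bottleneck at $s>\tfrac3{10}$). Your choice $\al=\tfrac65$ does satisfy all the correct constraints at $s>\tfrac3{10}$, so the argument as written is still a valid proof of the lemma as stated; but the reasoning used to land on $\al=\tfrac65$ and $\tfrac3{10}$ is borrowed from the wrong configuration.
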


\begin{proof}	
On $A_J \cap A_{J-1}^c$, we have 
 $| \wt{\phi}_{J+1} | \les (2J+4)^3 | \wt{\phi}_J   | $ 
 and thus  
 \begin{align}
  | \phi_{J+1} | \les | \wt{\phi}_{J+1}|+| \wt{\phi}_J | \les J^3 | \wt{\phi}_J | .
  \label{YN9}
 \end{align}

 \noi
Then, from \eqref{mujj2}, \eqref{YN9}, 
and  \eqref{muj}, 
we have
\begin{align}
\begin{split}
J^3 \prod_{j=1}^{J} | \wt{\phi}_j |^2 
& \gg  
| \phi_1 |
(  J^3 | \wt{\phi}_J   | )^{1-\al}
(J^3 | \wt{\phi}_J |  )^{\al}
 \prod_{j=2}^{J}\Big((2j)^3 | \phi_j |\Big)\\
& \ges  | \phi_1 |^{2-\al} | \phi_{J+1} |^{\al}
 \prod_{j=2}^{J}\Big((2j)^3 | \phi_j |\Big)
\end{split}
\label{NJ3}
\end{align} 

\noi 
for  $0\le\al \le1$.

Writing
\begin{align} 
\frac{ ( n_{\max}^{(1)})^{4s -6\s}    }{ | \phi_1|^{2-\al } }
\sim \frac{1 }{| \mu_1 |^{2-\al} (n_{\max}^{(1)})^{-4s + 6\s - 2\al + 4} }
\label{NJ4}
\end{align}

\noi and
\begin{align}
\frac{(n_{\max}^{(J+1)})^{-6\s }    }{ | \phi_{J+1}|^{\al} }
\sim \frac{1}{| \mu_{J+1} |^{\al} ( n_{\max}^{(J+1)})^{6\s+2\al} }, 
\label{NJ5}
\end{align} 

\noi
we impose 
$-4s + 6\s - 2\al + 4 > 0$ and  $6\s+2\al>0$, namely, 
\begin{align}
s>\al -\frac12 
\qquad  \text{and}\qquad 
 s>-\frac{\alpha}{3} + \frac12 . 
\label{YN10}
\end{align}

\noi
From 
 $|\mu_{J+1}| \les (n^{(J+1)}_{\max})^2$,  we have
\begin{align*}
| \mu_{J+1} |^{\al} ( n_{\max}^{(J+1)})^{6\s+2\al}
\ges 
|\mu_2|^{3s + 2\al - \frac 32 - 3\eps}.
\end{align*} 

\noi
We now impose 
$3s + 2\al - \frac 32  > 1$, 
 namely, 
\begin{align}
s>- \frac 23 \al + \frac 56.
\label{YN11}
\end{align}

\noi 
\noi
By optimizing 
the conditions \eqref{YN10} and \eqref{YN11}
with $\al = \frac 45$, 
we obtain the restriction $s > \frac 3{10}$.
Hence,  for $s > \frac 3{10}$, 
it follows from  \eqref{NJ3}, \eqref{NJ4}, \eqref{NJ5}, and \eqref{mujj31}
that 
\begin{align}
\sup_{n\in \Z}
& 
\sum_{\substack{{\bf n} \in \Nf(\TT_{J+1})\\ n_r = n}}
\ind_{A_J \cap (\bigcap_{j = 1}^{J-1} A_j^c)} \cdot 
(n^{(J+1)}_{\max})^{-6\s} 
(n^{(1)}_{\max})^{4s}
\prod_{j = 1}^J \frac{   (n^{(j)}_{\max})^{-6\s}}{|\wt{\phi}_j|^2} \notag\\
& \ll \frac{ J^3 } {\prod_{j = 2}^{J} (2j)^{3}}
\cdot  \sup_{n\in \Z}
\sum_{\substack{{\bf n} \in \Nf(\TT_{J+1})\\ n_r = n\\|\phi_j| \neq 0 
		\\ j = 1, \dots, J+1} } 
 \frac{ (n_{\max}^{(J+1)})^{-6\s} }{ | \phi_{J+1} |^\al  } 
 \frac{(n_{\max}^{(1)} )^{4s-6\s} }{| \phi_1 |^{2-\al}} 
 \prod_{j = 2}^{J} \frac{(n^{(j)}_{\max})^{-6\s}}{|\phi_j|} \notag \\
&\les \frac{J^3}{\prod_{j = 2}^{J} (2j)^{3}}
\cdot 
\sup_{n\in Z}\sum_{\substack{ \bf n \in \Nf(\TT_{J+1}) \\ n_r=n   } }  \prod_{j=1}^{J+1}\frac{1}{| \mu_j |^{1+\dl}} \notag  \\
& \les  \frac{C^{J+1} J^3 }{\prod_{j = 2}^{J} (2j)^{3}}
\label{mujj6}
\end{align}

\noi
for some small $\dl > 0$.
Then, the desired bound \eqref{YN8a} follows 
 from the Cauchy-Schwarz argument with \eqref{mujj6}. 
\end{proof}

We conclude this subsection by showing 
that the error term  $\Ns_2^{(J+1)}$
in \eqref{N^J+1_1}
tends to 0 as $J \to \infty$ under some regularity assumption on $v$.
From \eqref{N^J+1}, we have
\begin{align} \label{Er}
\Ns_2^{(J+1)}(v)
&  =- \sum_{\TT_{J+1} \in \mathfrak{BT}(J+1)}
\sum_{{\bf n} \in \mathfrak{N}(\TT_{J+1})}
\ind_{\bigcap_{j = 1}^{J} A_j^c}
\frac{\jb{n_r}^{2s}e^{- i \wt{\phi}_{J+1}t } }{\prod_{j = 1}^J \wt{\phi}_j}
\,\prod_{a \in \TT^\infty_{J+1}} v_{n_{a}}.
\end{align}

\begin{lemma}\label{LEM:N^J+1_2}
Let $\s > \frac 12$.
Then, given any  $v \in H^\s(\T)$, 
we have
\begin{align*} 
| \textup{\Ns}^{(J+1)}_2(v)|  \too 0, 
\end{align*}

	\noi
	as $J \to \infty$.
\end{lemma}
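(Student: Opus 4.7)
The plan is to combine two sources of smallness---a super-factorial lower bound for $\prod_{j=1}^{J}|\wt\phi_j|$ produced by iterated non-resonance on $\bigcap_{j=1}^{J}A_j^c$, and the absolutely summable Fourier coefficients of $v$ coming from $\s>\frac12$---to dominate the factorial tree count $c_{J+1}=2^J(J+1)!$ from \eqref{cj1}.

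First I would quantify the denominator. On $\bigcap_{j=1}^{J}A_j^c$, iterating the bound \eqref{muj} from the base case $|\wt\phi_1|=|\phi_1|\ge 1$ gives
$$|\wt\phi_j| \gg \prod_{k=2}^{j}(2k+2)^3 \ges (C_0\, j!)^3,\qquad j=1,\dots,J,$$
for an absolute constant $C_0>0$, so that $\prod_{j=1}^{J}|\wt\phi_j| \ges C_0^{3J}\prod_{j=1}^{J}(j!)^3$. In addition, since $\phi_j=\wt\phi_j-\wt\phi_{j-1}$ with $|\wt\phi_j|\gg |\wt\phi_{j-1}|$ on $\bigcap A_j^c$, we have $|\wt\phi_j|\ges |\phi_j|\ges (n^{(j)}_{\max})^2$ by \eqref{MU2}, giving additional quadratic frequency decay on the new frequencies introduced at each step.

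Second, I would split each factor as $|\wt\phi_j|=|\wt\phi_j|^{1-\theta}|\wt\phi_j|^{\theta}$ with a small $\theta\in(0,1)$, using the first piece (via $|\wt\phi_j|\ges (j!)^3$) for super-factorial decay in $J$ and the second piece (via $|\wt\phi_j|\ges (n^{(j)}_{\max})^2$) to absorb the weight $\jb{n_r}^{2s}$. Since $\s>\frac12\ge s$, Cauchy--Schwarz gives the Fourier $\ell^1$ embedding $\sum_{n\in\Z}|v_n|\le C_\s \|v\|_{H^\s}$ together with $|v_n|\le \jb{n}^{-\s}\|v\|_{H^\s}$. Distributing $\jb{n_r}^{2s}$ along a root-to-leaf path using $|n_r|\le \sum_{a\in\TT^\infty_{J+1}}|n_a|$ and combining with the quadratic gain then allows us to sum over all free frequency variables in $\Nf(\TT_{J+1})$, yielding a uniform bound of the form $C(v)^{2J+4}\cdot\bigl(C_0^{3(1-\theta)J}\prod_{j=1}^{J}(j!)^{3(1-\theta)}\bigr)^{-1}$ with $C(v)$ depending only on $\s$ and $\|v\|_{H^\s}$.

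Finally, summing over $\TT_{J+1}\in\mathfrak{BT}(J+1)$ via \eqref{cj1}, we arrive at
$$|\Ns_2^{(J+1)}(v)|\le \frac{2^J(J+1)!\,C(v)^{2J+4}}{C_0^{3(1-\theta)J}\prod_{j=1}^{J}(j!)^{3(1-\theta)}}\longrightarrow 0\quad\text{as }J\to\infty,$$
by Stirling's asymptotics, since even a $(1-\theta)$-power of the triple factorial in the denominator defeats both the single factorial tree count and the geometric factor $C(v)^{2J+4}$. The main obstacle is precisely this interpolation step: one has to balance $\theta$ so that sufficient frequency decay remains to absorb $\jb{n_r}^{2s}$ in the regime $\s$ only slightly larger than $\frac12$, while simultaneously retaining super-factorial decay in $J$.
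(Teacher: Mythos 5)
Your strategy is the same as the paper's (which is only one line long and defers to \cite[Subsection 4.5]{OST}): dominate the factorial tree count $c_{J+1}$ by the super\nobreakdash-factorial growth of $\prod_{j=1}^J|\wt\phi_j|$ forced by iterated non\nobreakdash-resonance, and use the $\ell^1$/algebra property of $H^\s$ for $\s>\frac12$ to sum over frequencies. Your bound $|\wt\phi_j|\gg\prod_{k=2}^j(2k+2)^3\ges (C_0\,j!)^3$ and the comparison with $c_{J+1}\sim 2^J(J+1)!$ is exactly the mechanism the paper invokes via \eqref{muj} and \eqref{cj1}.

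However, the interpolation $|\wt\phi_j|=|\wt\phi_j|^{1-\theta}|\wt\phi_j|^\theta$ and the ``distribute $\jb{n_r}^{2s}$ along a root-to-leaf path'' step are an unnecessary complication, and as you have stated it the path argument is not airtight: after you bound $\jb{n_r}^{2s}\les (2J+4)^{2s}\max_a\jb{n_a}^{2s}$ you need to couple the maximizing leaf $a$ to a $(n^{(j)}_{\max})^{-2\theta}$ factor, but if $a$ was introduced at generation $J+1$ there is no $|\wt\phi_{J+1}|$ in the denominator of \eqref{Er} (which runs only over $j=1,\dots,J$). The clean observation, which makes the whole issue disappear, is that here $s\le\frac12$ (this is the $s$ from Proposition~\ref{PROP:energy2}, while $\s>\frac12$ refers only to the a priori regularity of the smooth test function $v$), so $|n_r|=|n^{(1)}|\le n^{(1)}_{\max}$ together with $|\wt\phi_1|=|\phi_1|\ges (n^{(1)}_{\max})^2$ from \eqref{MU2} gives $\jb{n_r}^{2s}/|\wt\phi_1|^{s}\les 1$ outright. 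With that single factor of $|\wt\phi_1|^s$ spent, you retain the full super\nobreakdash-factorial $\prod_{j=2}^J(2j+2)^3$ from the remaining $|\wt\phi_j|$'s, apply Cauchy--Schwarz/$\ell^1$-embedding ($\s>\frac12$) to the leaf sum, and the estimate
$\dis |\Ns_2^{(J+1)}(v)|\les \frac{c_{J+1}}{\prod_{j=2}^J(2j+2)^3}\,(C_\s\|v\|_{H^\s})^{2J+4}\to 0$
follows without any $\theta$\nobreakdash-balancing. So: right ideas, same route as the paper, but the ``main obstacle'' you identify is an artifact of an over\nobreakdash-engineered decomposition rather than a genuine difficulty.
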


\begin{proof}
By the algebra property of $H^s(\T)$, $s > \frac 12$, 
we can easily bound \eqref{Er}
by $o_{J \to \infty} (1)\|v\|_{H^s}^{2J+4}$, 
where the decay in $J$ comes from 
\eqref{muj} for $j = 2, \dots J + 1$.
See also \cite[Subsection 4.5]{OST}.
\end{proof}

\begin{remark}\rm
We point out that one can actually prove Lemma \ref{LEM:N^J+1_2}
under a weaker regularity assumption $\s \geq \frac 16$.
See \cite[Lemma 8.15]{OW1}.
\end{remark}

\subsection{Proof of Proposition \ref{PROP:energy2}} \label{SUBSEC:energy2}
We briefly discuss the proof of  Proposition \ref{PROP:energy2}.
Let $v$ be a smooth global solution to \eqref{4NLS8}.
Then, by applying the normal form reduction $J$ times, we obtain\footnote{Once again, we are replacing $\pm 1$
	and  $\pm i$
	by 1 for simplicity since they play no role in our analysis.} 
\begin{align*} 
\frac {d}{dt} \bigg(\frac 12 \| v (t) \|_{H^s}^2\bigg)
& =  \frac {d}{dt}\bigg( \sum_{j = 2}^{J+1} \textup{\Ns}^{(j)}_{0}(v)(t)\bigg)
+ \sum_{j = 2}^{J+1} \textup{\Ns}^{(j)}_{1}(v)(t)\\
& \quad 
 + \sum_{j = 2}^{J+1} \textup{\Rs}^{(j)}(v)(t)+\Ns_{2}^{(J+1)}(v)(t). 
\end{align*}

\noi
For a smooth solution $v$, 
 Lemma \ref{LEM:N^J+1_2}
 allows us to take a  limit as $J \to \infty$, yielding
\begin{align*} 
\frac {d}{dt} \bigg(\frac 12 \| v (t) \|_{H^s}^2\bigg)
=  \frac {d}{dt}\bigg( \sum_{j = 2}^\infty \textup{\Ns}^{(j)}_{0}(v)(t)\bigg)
+ \sum_{j = 2}^\infty \textup{\Ns}^{(j)}_{1}(v)(t) + \sum_{j = 2}^\infty \textup{\Rs}^{(j)}(v)(t). 
\end{align*}

\noi
Therefore, we obtain \eqref{E8} for a smooth solution $v$ to \eqref{4NLS8}.
For a rough solution
 $v \in C(\R; H^\s(\T))$,  $- \frac 15 < \s \leq 0 $, 
 we can obtain the identity \eqref{E8} by a limiting argument.
 This argument is standard and thus we omit details.
 See, for example,  Subsection 8.5 in \cite{OW1}.
 
The bounds \eqref{E9}, \eqref{E10}, and \eqref{E11} follow
from 
Lemmas \ref{LEM:N0}, 
\ref{LEM:R0}, 
\ref{LEM:N^2_1}, 
\ref{LEM:N^J+1_0}, \ref{LEM:N^J+1_r}, and \ref{LEM:N^J+1_1}.
This proves  Proposition \ref{PROP:energy2}.

\begin{ackno}\rm
T.O.~was supported by the European Research Council (grant no.~864138 ``SingStochDispDyn"). 
K.S.~was partially supported by National Research Foundation of
Korea (grant NRF-2019R1A5A1028324).
K.S.~would like to express his gratitude to the School of Mathematics at the University of Edinburgh for its 
hospitality during his visit, 
 where this manuscript was prepared. 
 The authors would like to thank the anonymous referee for the helpful comments
  which improved the presentation  of the paper.
\end{ackno}

\end{document}